\newtheorem{theorem}{Theorem}[section]
\newtheorem{lemma}[theorem]{Lemma}
\newtheorem{proposition}[theorem]{Proposition}
\newtheorem{corollary}[theorem]{Corollary}
\newtheorem{definition}[theorem]{Definition}
\numberwithin{equation}{section}
\theoremstyle{definition}
\newtheorem{remark}[theorem]{Remark}
\newtheorem{example}[theorem]{Example}
\newcommand{\rp}{\mathbb{R}P}
\newcommand{\cp}{\mathbb{C}P}
\newcommand{\hp}{\mathbb{H}P}
\newcommand{\fp}{\mathbb{F}P}
\newcommand{\ff}{\mathbb{F}}
\newcommand{\rr}{\mathbb{R}}
\newcommand{\cc}{\mathbb{C}}
\newcommand{\hh}{\mathbb{H}}
\newcommand{\qq}{\mathbb{Q}}
\newcommand{\zz}{\mathbb{Z}}
\begin{document}

\title[Cohomology algebra of orbit space]{Cohomology algebra of orbit spaces of free involutions on the product of projective space and $4$-sphere}
\author{Ying Sun, Jianbo Wang}
\address{(Ying Sun) School of Mathematics, Tianjin University, Tianjin 300350, China}
\email{sy0097265@163.com}
\address{(Jianbo Wang) School of Mathematics, Tianjin University, Tianjin 300350, China}
\email{wjianbo@tju.edu.cn}

\thanks{This work was supported by the Natural Science Foundation of Tianjin City of China (Grant No. 19JCYBJC30300) and the National Natural Science Foundation of China (Grant No. 12071337).} 
\begin{abstract}
Let $X$ be a finitistic space with the mod $2$ cohomology of the product space of a projective space and a $4$-sphere. Assume that $X$ admits a free involution. 
In this paper we study the mod $2$ cohomology algebra of the quotient of $X$ by the action of the free involution  and derive some consequences regarding the existence of $\zz_2$-equivariant maps between such $X$ and an $n$-sphere.
\end{abstract}
\date{}
\subjclass[2020]{Primary 57S17; Secondary 55T10, 55N91.} 
\keywords{free involution, orbit space, Borel fibration, Leray-Serre spectral sequence, cohomology algebra}
\maketitle
\tableofcontents

\section{Introduction}
The study of the orbit space of a topological group $G$-action on a topological space $X$ is a classical topic in topology. In particular, the finitistic space plays an important role in the cohomology theory of transformation groups. A paracompact Hausdorff space $X$ is said to be \emph{finitistic} if every open covering of $X$ has a finite dimensional open refinement, where the dimension of a covering is one less than the maximum number of members of the covering which intersect nontrivially. Finitistic spaces behave nicely under compact Lie group $G$ actions. More precisely, the space $X$ is finitistic if and only if the orbit space $X/G$ is finitistic (\cite{DeoTripathi1982,DeoS.Singh1982}). 

For a given topological space $X$ with the action of a topological group $G$, it is often difficult to determine the topological type or homotopy type of $X/G$. Orbit spaces of free actions of finite groups on spheres have been studied extensively by Livesay \cite{Livesay1960}, Rice \cite{Rice1969}, Ritter \cite{Ritter1973}, Rubinstein \cite{Rubinstein1979} and many others. Tao \cite{Tao1962} determined orbit spaces of free involutions on $S^1\times S^2$. Later Ritter \cite{Ritter1974} extended the results to free actions of cyclic groups of order $2^n$. However, there are few known results on compact manifolds other than a sphere. Hence we try to determine the cohomology algebra of the orbit space of some more examples. 

To deal with more general spaces, by the notation $X\sim_{\qq} Y$  (resp. $X\sim_p Y$, $p$  a prime), we mean that $X$ and $Y$ have the same rational (resp. mod $p$) cohomology algebras, not necessarily induced by a map between $X$ and $Y$. Let’s list some related results.
\begin{itemize}
\item R.M. Dotzel and others (\cite{Dotzel2001}) have determined the cohomology algebra of  orbit spaces of $\zz_p$-action (resp. $S^1$-action)  on a finitistic space $X\sim_p S^m\times S^n$ (resp. $X\sim_\qq S^m\times S^n$). 
\item H.K. Singh and T.B. Singh have determined  the mod $2$ cohomology algebras of orbit spaces of  free $\zz_2$-action on a finitistic space $X\sim_2\rp^n$ and $X\sim_2\cp^n$ in \cite{HKSinghTEJBSingh2008}, and also determined the mod $p$ and rational (resp. mod $p$) cohomology algebras of orbit spaces of free $S^1$-action on a finitistic space $X\sim_F S^1\times  \cp^{m-1}$ with $F=\zz_p$ or $\qq$ (resp. mod $p$ cohomology lens space $X\sim_p L^{2m-1}(p;q_1,\dots, q_m)$) in \cite{HKSinghTBSingh2010}.  
\item M. Singh has determined the cohomology algebras of orbit spaces of free involutions on  a finitistic space $X\sim_2 \rp^n\times \rp^m$,  $X\sim_2 \cp^n\times \cp^m$ in \cite{MSingh2010} and $X\sim_2 L^{2m-1}(p;q_1,\dots, q_m)$ in \cite{MSingh2013}. 
\item P. Dey and M. Singh have calculated the mod $2$ cohomology algebras of orbit spaces of free $\zz_2$ and $S^1$-action on a compact Hausdorff space with mod $2$ cohomology algebra of a real or complex Milnor manifold (\cite{DeyMSingh2018}).
\item A.M.M. Morita et al have calculated the possible $\zz_2$-cohomology rings of orbit spaces of free actions of $\zz_2$ (or fixed point free involutions) on the Dold manifold $P(1,n)$ with $n$ odd (\cite{MoritaMattosPergher2018}).
\item P. Dey has determined the possible mod 2 cohomology algebra of orbit spaces of free involutions on a finite dimensional CW-complex homotopic to Dold manifold $P(m,n)$ (\cite{PDey2018}).
\item In \cite{SHTSingh2017}, S.K. Singh and others have determined the cohomology algebra of orbit spaces of free involutions on a finitistic space $X\sim_2 \fp^m\times S^3$, where $\fp^m$ is a projective space, and $\ff$ stands for either the field $\rr$ of real numbers, the field $\cc$ of complex numbers or the division ring $\hh$ of quaternions. 

\item As applications of cohomology algebras, the existence of $\zz_2$-equivariant maps $X\to S^n$ or $S^n\to X$ is discussed in \cite{DeyMSingh2018, HKSinghTEJBSingh2008, MSingh2010, MSingh2013, SHTSingh2017}.   
\end{itemize}

This paper deals with the free action of $\zz_2$ on a finitistic space $X$ with mod $2$ cohomology of the product of a projective space and $4$-sphere, i.e. a space $X\sim_2 \fp^m \times S^4$, along with the cohomology algebra of orbit spaces under free involutions. 

The paper is organized as follows. In \Cref{sec-Pre}, we recall the Leray-Serre spectral sequence associated to the Borel fibration $X\hookrightarrow X_G \to B_G$, and list some known results. \Cref{sec-mainresults} consists of three main Theorems \ref{thm-Z2faRPmS4}, \ref{thm-Z2faCPmS4}, \ref{thm-Z2faHPmS4} and two Lemmas \ref{lem-acttri}, \ref{lem-acttri-Z}.  In \Cref{sec-proofs}, we prove three main theorems which describe the possible cohomology algebras of orbit spaces. In the last \Cref{sec-equivmap}, as applications of the main theorems, we discuss the existence of $\zz_2$-equivariant maps $X\to S^n$ or $S^n\to X$.

\section{Preliminaries}\label{sec-Pre}

We now recall the Borel construction and some results on its spectral sequence. Let $G$ be a compact Lie group acting on a finitistic space $X$. Let 
\(
E_G \to B_G
\) 
be the universal principal $G$-bundle. The \emph{Borel construction} on $X$ is defined as the orbit space 
\[
X_G=(X\times E_G)/G,
\] 
where $G$ acts diagonally (and freely) on the product $X\times E_G$. The projection $X\times E_G\to E_G$ gives a fibration (\cite[Chapter IV]{Borel1960}), called the \emph{Borel fibration},
\[
X\overset{i}{\hookrightarrow} X_G \xrightarrow[]{\pi} B_G.
\] 
Throughout, we use the \v Cech cohomology with $\mathbb{Z}_2$ coefficients, and suppress it from the notation. 

We exploit the Leray-Serre spectral sequence $\{E_r^{k,l},d_r\}$ associated to the Borel fibration $X\overset{i}{\hookrightarrow} X_G \xrightarrow[]{\pi } B_G$ (\cite[Theorem 5.2]{Mccleary2001}), such that: 
\begin{itemize}
\item[(1)]
$d_r:E_r^{k,l}\to E_r^{k+r,l-r+1}$, and 
\[
E_{r+1}^{k,l}=\dfrac{\ker d_r:E_r^{k,l}\to E_r^{k+r,l-r+1}}{\text{im }d_r:E_r^{k-r,l+r-1}\to E_r^{k,l}}.
\]
\item[(2)] The infinity terms $E_{\infty}^{k,n-k}$ is isomorphic to the successive quotients $F_k^n/F_{k+1}^n
$ in a filtration  $0\subset F_n^n\subset\cdots \subset F_1^n \subset F_0^n=H^n(X_G)$ of $H^n(X_G)$.
\item[(3)] The $E_2$-term of this spectral sequence is given by 
\[
E_2^{k,l}=H^k(B_G;\mathcal{H}^l(X)), 
\] 
where $\mathcal{H}^l(X)$ is a locally constant sheaf with stalk $H^l(X)$, and the $E_2$-term converges to $H^*(X_G)$ as an algebra. 
\end{itemize}
If $\pi _1(B_G)$ acts trivially on $H^*(X)$, then the system of local coefficients is simple, that is, the cohomology with local coefficients $H^k(B_G;\mathcal{H}^l(X))$ is just the (ordinary) cohomology $H^k(B_G;H^l(X))$ so that, by the universal coefficient theorem, we have 
\[
E_2^{k,l}\cong H^k(B_G)\otimes H^l(X).
\]
Further, if the system of local coefficients is simple, the restriction of the product structure in the spectral sequence to the subalgebras $E_2^{*,0}$ and $E_2^{0,*}$ coincide with the cup products on $H^*(B_G)$ and $H^*(X)$, respectively. The edge homomorphisms 
\begin{align*}
& H^k(B_G)\cong E_2^{k,0}\twoheadrightarrow E_3^{k,0} \twoheadrightarrow \dots \twoheadrightarrow E_k^{k,0}\twoheadrightarrow E_{k+1}^{k,0}=E_\infty ^{k,0}\subset H^k(X_G)\text{~and}\\ 
& H^l(X_G)\twoheadrightarrow E_\infty ^{0,l} =E_{l+2}^{0,l}\subset E_{l+1}^{0,l}\subset \dots \subset E_2^{0,l} \cong H^l(X)
\end{align*}
are the homomorphisms
\begin{align*}
& \pi ^*:H^k(B_G)\to H^k(X_G) \text{~and~} \\
& i^*:H^l(X_G) \to H^l(X)
\end{align*}
respectively. 
The graded commutative algebra $H^*(X_G)$ is isomorphic to $\text{Tot}E_\infty ^{*,*}$, the total complex of $E_\infty ^{*,*}$, given by
\[
(\text{Tot}E_\infty ^{*,*})^{q}=\bigoplus_{k+l=q}E_{\infty}^{k,l}.
\]

Next, we recall some known results.
\begin{proposition}[{\cite[Corollary 9.6]{Loring2020}}]\label{Loringcor96} 
If a topological group $G=\mathbb{Z}_2$ acts freely on a topological space $X$ such that $X\to X/G$ is a principal $G$-bundle, then the equivariant cohomology $H^*_G(X)=H^*(X_G)$ is isomorphic to $H^*(X/G)$.
\end{proposition}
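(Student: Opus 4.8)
The plan is to reduce everything to the two projections out of the product $X \times E_G$. By definition the equivariant cohomology is $H^*_G(X) = H^*(X_G)$ with $X_G = (X\times E_G)/G$, so it suffices to produce an algebra isomorphism $H^*(X_G) \cong H^*(X/G)$. The hypothesis that $G = \zz_2$ acts \emph{freely} on $X$, with $X \to X/G$ a principal $G$-bundle, is exactly what makes this possible, and it is this bundle, rather than the universal one $E_G \to B_G$, that I would exploit.

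First I would use the first-coordinate projection $X \times E_G \to X$. Since it is $G$-equivariant, it descends to a map $p\colon X_G \to X/G$. This map exhibits $X_G = (X \times E_G)/G$ as the fiber bundle over $X/G$ associated to the principal bundle $X \to X/G$ through the $G$-space $E_G$; in particular $p$ is a locally trivial fibration with fiber $E_G$. This is the mirror image of the Borel fibration $X \hookrightarrow X_G \to B_G$, which arises instead from the \emph{other} projection $X \times E_G \to E_G$.

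Second I would feed $p$ into the Leray-Serre spectral sequence recalled above, now taken with base $X/G$ and fiber $E_G$. Because $E_G$ is contractible we have $H^l(E_G) = 0$ for $l > 0$ and $H^0(E_G) = \zz_2$, so the $E_2$-term
\[
E_2^{k,l} = H^k(X/G;\mathcal{H}^l(E_G))
\]
vanishes off the row $l = 0$ and equals $H^k(X/G)$ on it. Consequently the spectral sequence collapses at $E_2$ with no room for differentials or extension problems, and the edge homomorphism yields the isomorphism $p^*\colon H^*(X/G) \xrightarrow{\cong} H^*(X_G) = H^*_G(X)$.

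The point requiring the most care is the first step: verifying that $p$ is genuinely a fibration with contractible fiber, which rests on the local triviality of the principal bundle $X \to X/G$ together with the associated-bundle construction. Once that is secured the collapse is automatic, precisely because an acyclic fiber forces all higher rows of $E_2$ to vanish. One could instead observe that contractibility of $E_G$ makes $p$ a weak homotopy equivalence and hence a cohomology isomorphism; I prefer the spectral-sequence route, since it stays within the \v{C}ech cohomology with $\zz_2$ coefficients in which the rest of the paper operates and matches the machinery already set up for the Borel fibration.
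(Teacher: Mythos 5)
Your proposal is correct and takes essentially the same route as the paper's source for this statement (\cite[Corollary 9.6]{Loring2020}, which the paper invokes without reproving): both arguments rest on the first projection $X\times E_G\to X$ descending to the associated bundle $p\colon X_G\to X/G$ with contractible fiber $E_G$. The only difference is in the final step --- the cited proof concludes that $p$ is a weak homotopy equivalence, whereas you collapse the Leray--Serre spectral sequence of $p$ at $E_2$, an alternative you yourself acknowledge and one that fits naturally with the \v{C}ech cohomology and spectral-sequence machinery used throughout the paper.
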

\begin{proposition}[{\cite[Theorem 1.5, p.374]{Bredon1972}}]\label{Bredonthm15}
Let $G=\mathbb{Z}_2$ act on a finitistic space $X$ with $H^i(X)=0$ for all $i>n$. Then $H^i(X_G)$ is isomorphic to $H^i(X^G)$ for $i>n$, where $X^G$ is the fixed point set of the $G$-action.
\end{proposition}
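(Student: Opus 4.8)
The plan is to read this as a localization statement and prove it from the multiplicative structure of the Leray--Serre spectral sequence of the Borel fibration, comparing $X$ with its fixed set $X^G$. Since $G=\zz_2$ is compact and $X$ is Hausdorff, $X^G$ is closed and $U:=X\setminus X^G$ is an open, $G$-invariant subspace on which $G$ acts freely. Write $B_G=\rp^\infty$ and $R:=H^*(B_G)=\zz_2[t]$ with $\deg t=1$, and regard $H^*(X_G)$ as an $R$-module via $\pi^*$. The equivariant inclusion $X^G\hookrightarrow X$ induces a restriction $\rho\colon H^*(X_G)\to H^*((X^G)_G)$; as $G$ acts trivially on $X^G$ we have $(X^G)_G=X^G\times B_G$, so $H^*((X^G)_G)\cong H^*(X^G)\otimes R$, which is free, hence $t$-torsion-free, over $R$. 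The goal is to show $\rho$ is an isomorphism in degrees $>n$.

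First I would record the shape of the spectral sequence. By item~(3) of the setup, $E_2^{k,l}=H^k(B_G;\mathcal H^l(X))$, and the hypothesis $H^l(X)=0$ for $l>n$ forces $E_2^{k,l}=0$ for $l>n$. Thus the spectral sequence is confined to the finite horizontal strip $0\le l\le n$, only $d_2,\dots,d_{n+1}$ can be nonzero, and it collapses at $E_{n+2}=E_\infty$. The same picture holds for the trivial action on $X^G$, where the sequence degenerates at $E_2$ and realizes $H^*((X^G)_G)=H^*(X^G)\otimes R$, and $\rho$ is induced by a morphism of these spectral sequences over the identity of $B_G$.

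The heart of the argument is to invoke the Borel--Atiyah--Segal--Quillen localization theorem for the $\zz_2$-action on the finitistic space $X$: after inverting $t$, the map $S^{-1}\rho$ is an isomorphism onto $S^{-1}H^*((X^G)_G)$, because the contribution of the free part $U$ is annihilated by a power of $t$. Consequently $\ker\rho$ and $\operatorname{coker}\rho$ are $t$-torsion $R$-modules. To turn this into a degree statement I would then note that multiplication by $t$ is an isomorphism $E_2^{k,l}\to E_2^{k+1,l}$ for $k\ge 1$, hence remains an isomorphism on $E_\infty^{k,l}$ once $k$ exceeds a bound set by the finitely many surviving pages; since $t$-multiplication respects the filtration, it is eventually bijective on $H^q(X_G)$, so $H^*(X_G)$ has bounded $t$-torsion. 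As the target is torsion-free, both $\ker\rho$ and $\operatorname{coker}\rho$ vanish above some threshold, giving $H^i(X_G)\cong H^i((X^G)_G)$ there. Once the threshold is pinned at $n$ (see below), the free case $X^G=\varnothing$ used throughout the paper specializes to $H^i(X_G)=0$ for $i>n$, which combined with \Cref{Loringcor96} bounds the cohomology of the orbit space.

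The main obstacle is precisely the boundedness of the $t$-torsion in the sharp range $i>n$, rather than in some unspecified high degree. Localization alone yields an isomorphism only far out, and obtaining the exact bound amounts to a Conner--Floyd/Borel-type index estimate for the free $\zz_2$-space $U$: one must show the index (cohomological dimension) of the orbit space of the free part cannot exceed $n$, so that the finitely many differentials $d_2,\dots,d_{n+1}$ annihilate all $t$-torsion above total degree $n$. This is exactly where the finitistic hypothesis on $X$ is indispensable, since it is what makes the index finite and forces the abutment in total degrees $>n$ to reduce to the torsion-free part restricted from $X^G$; without it the strip $0\le l\le n$ at $E_2$ would not by itself control the high-degree behaviour of $H^*(X_G)$.
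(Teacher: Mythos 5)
The paper itself gives no proof of this proposition: it is quoted from Bredon's book (Theorem VII.1.5) as a black box, so your attempt has to be measured against the classical argument and, above all, against the sharp degree bound in the statement. Your soft part is essentially correct: the $E_2$-term lies in the strip $0\leqslant l\leqslant n$, multiplication by $t$ is an isomorphism $E_2^{k,l}\to E_2^{k+1,l}$ for $k\geqslant 1$ (every $\zz_2[\zz_2]$-module splits into trivial and free summands), and together with the localization theorem this makes $\ker\rho$ and $\operatorname{coker}\rho$ $t$-torsion and yields the isomorphism $H^i(X_G)\cong H^i((X^G)_G)$ in all sufficiently large degrees. But the $t$-periodicity degrades by one column per page: $d_r$ connects columns $k$ and $k+r$, so bijectivity of $t$ on $E_r^{k,l}$ is only guaranteed for $k\geqslant r-1$, hence on $E_\infty=E_{n+2}$ only for $k\gtrsim n$, and through the filtration your argument produces eventual bijectivity of $t$ on $H^i(X_G)$ only for $i\gtrsim 2n$. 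Nothing in the proposal lowers this threshold to $n$ --- and you concede exactly this in your last paragraph. That concession is the gap, not a technicality: the range $i>n$ is the actual content of the theorem as stated, whereas ``isomorphism far out'' is a strictly weaker assertion that your first three steps already exhaust.

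Concretely, the missing ingredient is the relative vanishing $H^i_G(X,X^G)=0$ for $i>n$, which is what converts the long exact sequence of the pair $(X_G,(X^G)_G)$ into the sharp isomorphism. The classical route (Bredon, Chapter VII) gets it in two steps you do not supply: first, since $G$ acts freely off $X^G$, the projection $(X_G,(X^G)_G)\to (X/G,X^G)$ has acyclic point preimages over the free part (they are copies of $E_G$), so a Vietoris--Begle argument in \v Cech cohomology identifies $H^*_G(X,X^G)$ with $H^*(X/G,X^G)$; second, one shows $H^i(X/G)=0$ for $i>n$ for the free part, using the Gysin sequence of the double cover (cup product with the characteristic class $\omega$ is an isomorphism in degrees $>n$) together with the nilpotency of $\omega$, and it is precisely this nilpotency that needs the finitistic hypothesis. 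Your appeal to a ``Conner--Floyd/Borel-type index estimate'' names this step but does not prove it. Two further cautions: the Borel--Atiyah--Segal--Quillen localization theorem for finitistic spaces is itself standardly proved \emph{from} this same relative vanishing, so invoking it as a black box while leaving the vanishing open is close to circular; and even granting your threshold argument, matching the statement requires identifying $H^i((X^G)_G)$ for $i>n$ with the cohomology of $X^G$, which uses the Smith-theoretic fact that $H^j(X^G)=0$ for $j>n$ --- a point your write-up never addresses.
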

\begin{proposition}[{\cite[Corollary 7.2, p.406]{Bredon1972}}]\label{Bredoncor72}
Let $G=\mathbb{Z}_2=\langle g\rangle$ act on a finitistic space $X$. Then the element $cg^*(c)\in H^{2n}(X)^G=H^0(B_G;H^{2n}(X))=E_2^{0,2n}$ is a permanent cocycle in the spectral sequence of $X\hookrightarrow X_G \to B_G$, for any $c\in H^n(X)$. 
\end{proposition}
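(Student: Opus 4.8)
The plan is to first record that $c\,g^\ast(c)$ genuinely sits in $E_2^{0,2n}$ and then reduce the assertion to a statement about the image of an edge homomorphism. Working mod $2$ and using $g^2=\mathrm{id}$ together with commutativity of the cup product, one has $g^\ast\bigl(c\,g^\ast(c)\bigr)=g^\ast(c)\,c=c\,g^\ast(c)$, so $c\,g^\ast(c)$ is $g$-invariant and represents a class in $E_2^{0,2n}=H^{2n}(X)^G$. Recall from \Cref{sec-Pre} that the fiber-restriction edge homomorphism $i^\ast\colon H^{2n}(X_G)\to H^{2n}(X)$ is the composite of the surjection $H^{2n}(X_G)\twoheadrightarrow E_\infty^{0,2n}$ with the inclusion $E_\infty^{0,2n}\subset E_2^{0,2n}$; since no differential enters the column $k=0$, a class in $E_2^{0,2n}$ survives to $E_\infty$ exactly when it lies in $E_\infty^{0,2n}=\im i^\ast$. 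Hence it suffices to prove $c\,g^\ast(c)\in\im i^\ast$.

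Next I would reinterpret $\im i^\ast$ through the double cover attached to the Borel construction. As $G=\zz_2$ acts freely on $X\times E_G$, the quotient map $p\colon X\times E_G\to X_G$ is a principal $\zz_2$-bundle, i.e.\ a $2$-fold cover whose deck involution is $g$, and the contractibility of $E_G$ makes the inclusion $j\colon X\hookrightarrow X\times E_G$, $x\mapsto(x,e_0)$, a homotopy equivalence with $p\circ j=i$. Under the resulting identification $j^\ast\colon H^\ast(X\times E_G)\xrightarrow{\sim}H^\ast(X)$ the edge homomorphism $i^\ast$ corresponds to the covering restriction $p^\ast$ and $g^\ast$ to the deck action, so the goal reduces to showing $c\,g^\ast(c)\in\im p^\ast$.

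The heart of the argument is then a transfer computation. For the $2$-fold cover $p$ there is a transfer $\tau\colon H^\ast(X\times E_G)\to H^\ast(X_G)$, available for \v Cech cohomology of paracompact Hausdorff spaces, sitting in a Gysin exact sequence which gives $\im p^\ast=\ker\tau$; so I need $\tau\bigl(c\,g^\ast(c)\bigr)=0$. Using the defining relation $p^\ast\tau(c)=c+g^\ast(c)$ (so $g^\ast(c)=c+p^\ast\tau(c)$ mod $2$) and the projection formula $\tau(x\cdot p^\ast y)=\tau(x)\,y$, one computes
\[
\tau\bigl(c\,g^\ast(c)\bigr)=\tau\bigl(c^2\bigr)+\tau\bigl(c\cdot p^\ast\tau(c)\bigr)=\tau\bigl(c^2\bigr)+\tau(c)^2 .
\]
The decisive identity is $\tau(c^2)=\tau(c)^2$: since $\tau$ commutes with Steenrod squares and, in degree $n$, the top square is the cup square, one gets $\tau(c^2)=\tau(\mathrm{Sq}^n c)=\mathrm{Sq}^n\tau(c)=\tau(c)^2$, the class $\tau(c)$ again lying in degree $n$. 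This forces $\tau\bigl(c\,g^\ast(c)\bigr)=0$, hence $c\,g^\ast(c)\in\ker\tau=\im p^\ast=\im i^\ast=E_\infty^{0,2n}$, as required.

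The step I expect to be the genuine obstacle is justifying the transfer machinery at this level of generality: the existence and naturality of the $2$-fold transfer and the Gysin relation $\im p^\ast=\ker\tau$ in \v Cech cohomology of finitistic spaces, and above all the compatibility $\tau\circ\mathrm{Sq}^n=\mathrm{Sq}^n\circ\tau$ that yields $\tau(c^2)=\tau(c)^2$. Should one prefer to stay inside the spectral sequence, an alternative is to show directly that $c\,g^\ast(c)$ is annihilated by every $d_r$ by exploiting the multiplicative (derivation) structure of the differentials and inducting on $r$; this avoids transfers but is considerably more delicate bookkeeping, so I would treat the transfer route as the primary line of attack.
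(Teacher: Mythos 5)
The paper offers no proof of this statement at all—it is quoted from Bredon's book—so the relevant comparison is with Bredon's own argument, and yours is genuinely different. Bredon's route is constructive: applying Steenrod's quadratic (extended power) construction to the map $X\to X\times X$, $x\mapsto (x,gx)$, which is equivariant from the involution $g$ to the factor-swap involution, he produces an explicit class in $H^{2n}(X_G)$ restricting on the fiber to $c\,g^*(c)$; membership in $\im i^*=E_\infty^{0,2n}$ then gives the permanent cocycle at once, and the explicit lift is what feeds into the fixed-point theorems (e.g.\ the paper's \Cref{Bredonthm74}). You instead argue by obstruction: your reductions are all sound—$c\,g^*(c)$ is invariant, survival in column $0$ is equivalent to lying in $E_\infty^{0,2n}=\im i^*$, passing to the double cover $p\colon X\times E_G\to X_G$ is legitimate (and neatly sidesteps the fact that the action on $X$ need not be free, since the diagonal action upstairs always is), and the identification $\im p^*=\ker\tau$ holds in this generality via the sheaf sequence $0\to\zz_2\to p_*\zz_2\to\zz_2\to 0$, which is exactly the Smith--Gysin sequence available in \v Cech cohomology of paracompact spaces. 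You are also right to locate the entire content in the single identity $\tau(c^2)=\tau(c)^2$: your own projection-formula computation shows $\tau(c)^2=\tau(c^2)+\tau(c\,g^*(c))$, so this identity is literally equivalent to the desired vanishing and cannot be squeezed out of the projection formula alone—it genuinely requires $\tau\circ\mathrm{Sq}^n=\mathrm{Sq}^n\circ\tau$, i.e.\ that the transfer of a double cover is a stable map. That compatibility is a true classical fact (standard for CW complexes via the stable transfer, and transportable to this setting because \v Cech cohomology of paracompact Hausdorff spaces is representable, so one can pull back the universal transfer for $S^\infty\to\rp^\infty$), so your proof closes, but at the cost of importing the transfer-plus-Steenrod package as a black box. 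In short: Bredon's proof buys an explicit equivariant representative and stays within the equivariant-cohomology machinery his book has already built for finitistic spaces; yours buys a short, conceptually clean derivation, with the technical weight concentrated precisely where you flagged it.
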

\begin{proposition}[{\cite[Theorem 7.4, p.407]{Bredon1972}}]\label{Bredonthm74}
Let $G=\mathbb{Z}_2=\langle g\rangle $ act on a finitistic space $X$. Suppose that $H^i(X)=0$ for all $i>2n$ and $H^{2n}(X)=\mathbb{Z}_2$. Suppose that $c\in H^n(X)$ is an element such that $cg^*(c)\ne 0$, then the fixed point set is non-empty.
\end{proposition}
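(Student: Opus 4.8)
The plan is to argue by contradiction, assuming that the fixed point set $X^G$ is empty, and to derive from the hypothesis $cg^*(c)\neq 0$ that $H^i(X_G)$ is nonzero in arbitrarily high degrees; this will clash with the vanishing forced by \Cref{Bredonthm15}. Concretely, if $X^G=\emptyset$ then $H^i(X^G)=0$ for every $i$, so by \Cref{Bredonthm15} (applied with $2n$ in the role of $n$) one gets $H^i(X_G)\cong H^i(X^G)=0$ for all $i>2n$. The whole proof then reduces to contradicting this by exhibiting nonzero cohomology of $X_G$ above degree $2n$.

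First I would record the shape of the spectral sequence of the Borel fibration $X\hookrightarrow X_G\to B_G$ with $B_G=B_{\zz_2}=\rp^\infty$, so that $H^*(B_G)=\zz_2[t]$ with $\deg t=1$, and note that $2n$ is the top nonzero fibre row since $H^i(X)=0$ for $i>2n$. The class $a:=cg^*(c)$ lies in $E_2^{0,2n}=H^{2n}(X)^G$ and is nonzero by hypothesis; moreover, because $H^{2n}(X)=\zz_2$ admits only the trivial automorphism, $g^*$ acts trivially on the top group, so the coefficient system $\mathcal H^{2n}(X)$ on the top row is simple and $E_2^{k,2n}\cong H^k(B_G)\cong\zz_2$ for every $k\ge 0$. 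Under this identification $E_2^{*,2n}$ is a free rank-one module over $\zz_2[t]$, so multiplication by $t^k$ is injective and $t^k a\neq 0$ in $E_2^{k,2n}$ for all $k$.

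The decisive step is to show that each $t^k a$ survives to a nonzero class in $E_\infty^{k,2n}$. For this I would invoke two facts. By \Cref{Bredoncor72} the element $a=cg^*(c)$ is a permanent cocycle, and the base class $t$ is trivially a permanent cocycle (all differentials into and out of $E_r^{1,0}$ vanish for degree reasons); hence, by the Leibniz rule, the product $t^k a$ is a permanent cocycle as well. On the other hand, since the row $2n$ is the top nonzero fibre row, no differential $d_r$ can map into $E_r^{k,2n}$ (its source would sit in row $2n+r-1>2n$, which is zero), so on the top row the passage from $E_r$ to $E_{r+1}$ is simply the kernel of $d_r$, with no quotient by images. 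Consequently a nonzero permanent cocycle on the top row remains nonzero in $E_\infty$, giving $E_\infty^{k,2n}\neq 0$ for every $k\ge 0$. Since $E_\infty^{k,2n}$ is a subquotient of $H^{k+2n}(X_G)$, this yields $H^{k+2n}(X_G)\neq 0$ for all $k\ge 0$, i.e. $H^i(X_G)\neq 0$ for all $i\ge 2n$, contradicting the vanishing above degree $2n$ obtained in the first step.

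I expect the main obstacle to be precisely the survival argument in the last paragraph: one must control both that $t^k a$ is never hit by a differential and that it is never killed as a non-cocycle. The non-cocycle half is handled by multiplicativity together with \Cref{Bredoncor72}, while the ``never a boundary'' half hinges on $2n$ being the top fibre row. The only other point requiring care is the identification of the top-row coefficient system as simple, which is what lets cup product with $t$ behave as ordinary multiplication in $\zz_2[t]$; this is where the hypothesis $H^{2n}(X)=\zz_2$ (rather than a larger top group) is essential.
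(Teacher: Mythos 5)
Your proof is correct: the paper itself offers no proof of this proposition (it is quoted directly from Bredon \cite{Bredon1972}), and your argument is precisely the standard one behind that citation --- $cg^*(c)$ is a permanent cocycle in $E_2^{0,2n}$ by \Cref{Bredoncor72}, the top row $l=2n$ receives no differentials since all higher rows vanish, so $t^k\cdot cg^*(c)$ survives nonzero to $E_\infty^{k,2n}$ for every $k$, contradicting the vanishing of $H^i(X_G)$ for $i>2n$ that \Cref{Bredonthm15} forces when $X^G=\emptyset$. Your argument also coincides with the technique the paper deploys in its own proof of \Cref{lem-acttri}, where a permanent cocycle $t^k\otimes cg^*(c)$ surviving to $E_\infty$ is played off against \Cref{Bredonthm15} in exactly the same way.
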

\begin{proposition}[{\cite[Corollary 7.5, p.407]{Bredon1972}}]\label{Bredoncor75}
Let $G=\mathbb{Z}_2=\langle g\rangle $ act on a finitistic space $X\sim_2 S^n\times S^n$ and suppose that $g^*\ne 1$ on $H^n(X)$. Then the fixed point set is non-empty.
\end{proposition}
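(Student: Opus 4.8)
The plan is to reduce the statement to Proposition \ref{Bredonthm74}. Since $X\sim_2 S^n\times S^n$ has $H^i(X)=0$ for all $i>2n$ and $H^{2n}(X)=\mathbb{Z}_2$, that proposition applies verbatim, so it suffices to exhibit a single class $c\in H^n(X)$ with $c\,g^*(c)\ne 0$. Thus the whole task is to locate such a $c$.

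First I would record the ring structure. We have $H^*(X)\cong H^*(S^n\times S^n;\mathbb{Z}_2)=\mathbb{Z}_2[a,b]/(a^2,b^2)$ with $|a|=|b|=n$, so $H^n(X)$ is the $2$-dimensional $\mathbb{Z}_2$-space spanned by $a,b$, while $H^{2n}(X)=\mathbb{Z}_2\langle ab\rangle$. The cup-product pairing $H^n(X)\times H^n(X)\to H^{2n}(X)$ is then determined by $a^2=b^2=0$ and $ab\ne 0$; in particular $\langle c,c\rangle=0$ for every $c$, so this pairing is a nondegenerate \emph{alternating} (symplectic) form on the plane $H^n(X)$.

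Next I would observe that, since $g$ is a self-map of $X$, the map $g^*$ is a graded ring automorphism of $H^*(X)$. As $H^{2n}(X)=\mathbb{Z}_2$ admits only the identity automorphism, $g^*$ fixes $H^{2n}(X)$ pointwise, and hence $g^*(c)\,g^*(d)=g^*(c\,d)=c\,d$ for all $c,d\in H^n(X)$; that is, $g^*$ is an isometry of the symplectic plane $(H^n(X),\cup)$. The key point is then the elementary fact that for a \emph{nonzero} $c\in H^n(X)$ one has $c\,g^*(c)\ne 0$ if and only if $g^*(c)\ne c$: because the form is alternating and nondegenerate on a $2$-dimensional space, the orthogonal complement $c^{\perp}$ is exactly the line $\{0,c\}$, so $c\,g^*(c)=0$ forces $g^*(c)\in\{0,c\}$, and $g^*(c)\ne 0$ (as $g^*$ is invertible) leaves only the possibility $g^*(c)=c$.

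Finally, since by hypothesis $g^*\ne 1$ on $H^n(X)$, there is some $c\in H^n(X)$ with $g^*(c)\ne c$; by the preceding equivalence this $c$ satisfies $c\,g^*(c)\ne 0$, and Proposition \ref{Bredonthm74} then gives $X^G\ne\varnothing$. I expect no genuine obstacle here, since all the topological content is absorbed into Proposition \ref{Bredonthm74}; the only thing to verify is the linear-algebra assertion that a nonidentity isometry of the symplectic $\mathbb{Z}_2$-plane must move some vector off itself. If one prefers to avoid the conceptual argument, the same conclusion follows just as quickly by listing the three nonidentity involutions in $GL_2(\mathbb{Z}_2)$ and exhibiting a suitable $c$ (for instance $c=a$ when $g^*$ swaps $a,b$, and $c=a$ or $c=b$ in the two transvection cases) in each.
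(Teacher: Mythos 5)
Your proof is correct: reducing to \Cref{Bredonthm74} and then observing that the cup product makes $H^n(X)$ a nondegenerate alternating plane over $\mathbb{Z}_2$ — so that $c^{\perp}=\{0,c\}$ for each nonzero $c$, whence any invertible $g^*$ with $c\,g^*(c)=0$ for all $c$ must fix every class — is exactly the standard derivation of this corollary, and all the supporting steps (that $g^*$ is a ring automorphism fixing $H^{2n}(X)=\mathbb{Z}_2$, and the case check over the three nonidentity involutions in $GL_2(\mathbb{Z}_2)$) check out. Note that the paper itself supplies no proof here — the proposition is quoted verbatim from Bredon's book, where Corollary 7.5 follows from Theorem 7.4 (the paper's \Cref{Bredonthm74}) in just the way you argue — so your proposal matches the intended argument rather than offering a genuinely different route.
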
  

\section{Cohomology algebra of orbit space of free $\zz_2$-action on $X\sim_2\mathbb{F}P^m\times S^4$}\label{sec-mainresults} 
  
Assume that $X$ is a finitistic space equipped with a free involution and has the mod $2$ cohomology of $\mathbb{F}P^m\times S^n$, i.e.,
\[
H^*(X)=\mathbb{Z}_2[a,b]/\langle a^{m+1},b^2\rangle,
\]
where,  $\deg a=\lambda$,  when $\mathbb{F}=\mathbb{R}, \mathbb{C}$ or $\mathbb{H}$, $\lambda=1$, $2$ or $4$, respectively, and $\deg b=n$. 
Now, we present three main theorems of this paper. More concretely, we determine the cohomology algebras of orbit spaces of free involutions on $X\sim _2 \mathbb{F}P^m \times S^4$.

\begin{theorem}\label{thm-Z2faRPmS4}
Let $G=\zz_2$ act freely on a finitistic space $X\sim _2 \mathbb{R}P^m \times S^4$. If $m=5$ or $m=7$, assume further that the action of $G$ on $H^*(X;\zz_2)$ is trivial or $X\sim_{\zz} \mathbb{R}P^m \times S^4$. Then $H^*(X/G)$ is isomorphic to one of the following graded commutative algebras 
\begin{alignat*}{2}
&\zz_2[x,y,z]/I_1, && ~\deg x=1, \deg y=2, \deg z=4;\\
&\zz_2[x,y]/I_k, &&  ~\deg x=1, \deg y=1, ~k=2,3,\dots,9.
\end{alignat*}
Where the ideal $I_k$ is listed as follows:

$(\mathrm{1})$ $I_1=\langle x^2,y^{\frac{m+1}{2}},z^2\rangle$, where $m$ is odd.

$(\mathrm{2})$ $I_2=\langle x^5,y^{m+1}+\alpha_1xy^m+\alpha _2x^2y^{m-1}+\alpha _3x^3y^{m-2}+\alpha _4x^4y^{m-3}\rangle$, where $\alpha _i\in \zz_2$, $ i=1,\dots ,4$. If $m=1$, then $\alpha_3=\alpha_4=0$. If $m=2$, then $\alpha_4=0$.

$(\mathrm{3})$ $I_3=\langle x^{m+5},y^{m+1}+\alpha _1xy^m+\alpha _2x^2y^{m-1}+\alpha _3x^3y^{m-2}+\alpha _4x^{m+1},x^4y\rangle$, where $\alpha _i \in \zz_2$, $ i=1,\dots ,4$. If $m=1$, then $\alpha_3=\alpha_4=0$. If $m=2$, then $\alpha_4=0$.

$(\mathrm{4})$ $I_4=\langle x^{m+5},y^{m+1}+\alpha _1xy^m+\alpha _2x^2y^{m-1}+\alpha _3x^my+\alpha _4x^{m+1},x^3y^2+\beta _1x^4y+\beta_2 x^5,x^{m+3}y\rangle$, where $m\geqslant 2$ and $\alpha _i, \beta_1, \beta_2 \in \zz_2$, $ i=1,\dots ,4$. If $m=2$, then $\alpha_3=0$.

$(\mathrm{5})$ $I_5=\langle x^{m+4},y^{m+1}+\alpha _1xy^m+\alpha _2x^2y^{m-1}+\alpha _3x^my+\alpha _4x^{m+1},x^3y^2+\beta _1x^4y+\beta_2 x^5 \rangle$, where $m\geqslant 2$ and $\alpha _i, \beta_1, \beta_2\in \zz_2$, $ i=1,\dots ,4$. If $m=2$, then $\alpha_3=0$.

$(\mathrm{6})$ $I_6=\langle x^{m+5},y^{m+1}+\alpha _1xy^m+\alpha _2x^{m-1}y^2+\alpha _3x^my+\alpha _4x^{m+1}, x^2y^3+\beta _1x^3y^2+\beta _2x^4y+\beta_3 x^5, x^{m+1}y^2+\gamma _1x^{m+2}y+\gamma _2x^{m+3}, x^{m+3}y\rangle$, where  $m\geqslant 3$ and $\alpha_i, \beta_j, \gamma_1, \gamma_2 \in \zz_2$, $ i=1,\dots ,4$, $j=1,2,3$.

$(\mathrm{7})$ $I_7=\langle x^{m+4},y^{m+1}+\alpha _1xy^m+\alpha _2x^{m-1}y^2+\alpha _3x^my+\alpha _4x^{m+1}, x^2y^3+\beta _1x^3y^2+\beta _2x^4y+\beta_3 x^5, x^{m+1}y^2+\gamma _1x^{m+2}y+\gamma _2x^{m+3}\rangle$, where  $m\geqslant 3$ and $\alpha _i, \beta_j, \gamma_1, \gamma_2\in \zz_2$, $ i=1,\dots ,4$, $j=1,2,3$.

$(\mathrm{8})$ $I_8=\langle x^{m+5}, y^{m+1}+\alpha _1xy^m+\alpha _2x^{m-1}y^2+\alpha _3x^my+\alpha _4x^{m+1},x^2y^3+\beta _1x^3y^2+\beta _2x^4y+\beta_3 x^5, x^{m+2}y\rangle$, where  $m\geqslant 3$ and $\alpha _i, \beta_j \in \zz_2$, $ i=1,\dots ,4$, $j=1,2,3$.

$(\mathrm{9})$ $I_9=\langle x^{m+3},y^{m+1}+\alpha _1xy^m+\alpha _2x^{m-1}y^2+\alpha _3x^my+\alpha _4x^{m+1},x^2y^3+\beta _1x^3y^2+\beta _2x^4y+\beta_3 x^5\rangle$, where $m\geqslant 3$ and $\alpha _i, \beta_j\in \zz_2$, $ i=1,\dots ,4$, $j=1,2,3$.
\end{theorem}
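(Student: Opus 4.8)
The plan is to run the Leray--Serre spectral sequence of the Borel fibration $X\overset{i}{\hookrightarrow} X_G\xrightarrow{\pi} B_G$ with $B_G=\mathbb{R}P^\infty$, so that $H^*(B_G)=\zz_2[t]$ with $\deg t=1$. Since the involution is free, \Cref{Loringcor96} identifies $H^*(X/G)\cong H^*(X_G)$, and it therefore suffices to compute $H^*(X_G)$ together with its algebra structure via $\mathrm{Tot}\,E_\infty^{*,*}$. The first task is to pin down the local coefficient system. Writing $H^*(X)=\zz_2[a,b]/\langle a^{m+1},b^2\rangle$ with $\deg a=1,\ \deg b=4$, one sees $g^*(a)=a$ for dimension reasons, while $g^*(b)=b+\varepsilon a^4$ for some $\varepsilon\in\zz_2$ (with $\varepsilon=0$ forced when $m<4$). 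Invoking Lemmas \ref{lem-acttri} and \ref{lem-acttri-Z}---which is exactly where the extra hypothesis for $m=5,7$ (triviality of the action, or the integral model $X\sim_{\zz}\mathbb{R}P^m\times S^4$) is consumed---I would reduce to $\varepsilon=0$, so that the system is simple and
\[
E_2^{k,l}=H^k(B_G)\otimes H^l(X)=\zz_2[t]\otimes H^*(X).
\]

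Next I would organize the differentials around the two algebra generators. Because $H^1(X)$ is one dimensional, the only possibilities are $d_2(a)=0$ or $d_2(a)=t^2$; and because $b$ sits on the row $l=4$, its first potentially nonzero differential is one of $d_2(b)=t^2a^3$, $d_3(b)=t^3a^2$, $d_4(b)=t^4a$, $d_5(b)=t^5$, each target line being one dimensional so that the only choice is whether the value is zero. The input that \emph{forces} enough of these to be nonzero is finiteness: the action being free gives $X^G=\varnothing$, so \Cref{Bredonthm15} yields $H^i(X_G)=0$ for $i>m+4$; hence the $t$-tower in $E_\infty^{*,0}$ must be truncated, which is impossible unless suitable differentials fire. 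This splits the computation. When $d_2(a)=t^2$ the Leibniz rule collapses the whole $a$-times-$t$ double tower, leaving (for $m$ odd) the classes $a^2$ and $b$ as permanent cocycles and producing $x=\pi^*(t)$ with $x^2=0$, $y=$ image of $a^2$ of degree $2$, and $z=$ image of $b$ of degree $4$: this is case $(1)$, the even case being excluded by the finiteness obstruction coming from the surviving $a^mt^k$. When $d_2(a)=0$, instead $a$ is a permanent cocycle, so $x=\pi^*(t)$ and $y=$ image of $a$ are both degree-one generators, the common shape of cases $(2)$--$(9)$.

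In the branch $d_2(a)=0$ I would then distinguish cases by the transgression level of $b$: $d_5(b)=t^5$ kills the $t$-tower directly and gives $x^5=0$ (case $(2)$), while $d_4(b)=t^4a$, $d_3(b)=t^3a^2$, $d_2(b)=t^2a^3$ yield the primary mixed relations $x^4y$, $x^3y^2+\cdots$, $x^2y^3+\cdots$ respectively (cases $(3)$; $(4),(5)$; $(6)$--$(9)$). When $b$ transgresses before page $5$, the pure $t$-tower survives the first differential, and finiteness then forces \emph{secondary} transgressions off the surviving classes $a^{m-2}b,\,a^{m-1}b,\,a^mb$; the page ($d_{m+3},d_{m+4}$ or $d_{m+5}$) on which such a class reaches the bottom row fixes the nilpotency height $x^{m+3},x^{m+4},x^{m+5}$ together with the auxiliary relations $x^{m+3}y$, $x^{m+2}y$, $x^{m+1}y^2+\cdots$. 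Enumerating the admissible, mutually consistent secondary patterns is what separates $(4)$ from $(5)$ and distinguishes $(6)$--$(9)$ from one another, and reading off $\mathrm{Tot}\,E_\infty$ in each pattern yields the additive structure.

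The final step is to upgrade the associated graded $E_\infty$ to the genuine algebra $H^*(X_G)$ by solving the multiplicative extension problem. Over $\zz_2$ there are no additive extensions, so the only ambiguity is that a product vanishing in $E_\infty$ may equal a class of strictly higher filtration and the same total degree; this is precisely the origin of the correction terms, e.g.\ $y^{m+1}$ (zero in the associated graded since $a^{m+1}=0$) may acquire $\alpha_1xy^m+\alpha_2x^2y^{m-1}+\cdots$, and similarly for the $\beta_j,\gamma_i$ attached to the mixed relations. The low-$m$ side conditions (such as $\alpha_3=\alpha_4=0$ when $m=1$) are then read off from the requirement that each correction monomial have genuine total degree $m+1$ and lie in the truncated range of $x$- and $y$-powers. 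I expect the main obstacle to be twofold: first, proving that finiteness forces \emph{exactly} the listed secondary-transgression patterns and no others, so that $(1)$--$(9)$ is both exhaustive and non-redundant; and second, controlling the extensions tightly enough to show that every surviving coefficient $\alpha_i,\beta_j,\gamma_i$ is a free $\zz_2$-parameter rather than being further constrained.
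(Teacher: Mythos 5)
Your proposal follows essentially the same route as the paper's proof: the identical Borel-fibration spectral sequence with Lemmas \ref{lem-acttri} and \ref{lem-acttri-Z} consumed exactly where you say to force simple coefficients, the same case split governed by $d_2(1\otimes a)$ and the transgression level of $b$ (pages $2$ through $5$), \Cref{Bredonthm15} forcing the secondary differentials $d_{m+3}$, $d_{m+4}$, $d_{m+5}$ that separate cases (3)--(9), and the same filtration-shift resolution of the multiplicative extensions producing the $\alpha_i,\beta_j,\gamma_i$. Two minor corrections: in case (1) the exclusion of even $m$ comes from the Leibniz contradiction $0=d_2(1\otimes a^{m+1})=t^2\otimes a^m$ (and one must separately rule out $d_2(1\otimes b)\ne 0$ via $d_2d_2(1\otimes ab)\ne 0$), not from a finiteness obstruction; and the theorem only lists possible algebras, so your final concern about each coefficient being a genuinely free parameter is not actually required.
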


\begin{theorem}\label{thm-Z2faCPmS4}
Let $G=\zz_2$ act freely on a finitistic space $X\sim _2 \mathbb{C}P^m \times S^4$. If $m=3$, assume further that the action of $G$ on $H^*(X;\zz_2)$ is trivial or $X\sim _{\zz} \mathbb{C}P^3 \times S^4$. Then $H^*(X/G)$ is isomorphic to one of the following graded commutative algebras 
\begin{alignat*}{2}
&\zz_2[x,y,z]/I_1, &&~ \deg x=1, \deg y=4, \deg z=4;\\
&\zz_2[x,y]/I_k, &&~ \deg x=1, \deg y=2, k=2,3. 
\end{alignat*}
Where the ideal $I_k$ is listed as follows:

$(\mathrm{1})$ $I_1=\langle x^3,y^{\frac{m+1}{2}},z^2\rangle$, where $m$ is odd.

$(\mathrm{2})$ $I_2=\langle x^5,y^{m+1}+\alpha_1x^2y^m+\alpha _2x^4y^{m-1}\rangle$, where $\alpha _1, \alpha_2\in \zz_2$.

$(\mathrm{3})$ $I_3=\langle x^{2m+5},y^{m+1}+\alpha _1x^2y^m+\alpha _2x^{2m+2},x^3y\rangle$, where $\alpha_1, \alpha_2 \in \zz_2$.
\end{theorem}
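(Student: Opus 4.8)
The plan is to compute $H^*(X/G)$ through the Borel construction. Since $G=\zz_2$ acts freely, \Cref{Loringcor96} gives $H^*(X/G)\cong H^*(X_G)$, so it suffices to analyze the Leray--Serre spectral sequence of the Borel fibration $X\hookrightarrow X_G\to B_G$ with $B_G=\rp^\infty$ and $H^*(B_G)=\zz_2[t]$, $\deg t=1$. First I would pin down the local coefficient system. As $H^2(X)=\zz_2\langle a\rangle$ is one--dimensional, $g^*a=a$; on $H^4(X)=\zz_2\langle a^2,b\rangle$ the fixed class $a^2$ forces $g^*b\in\{b,\,b+a^2\}$. I would show that the twisted possibility $g^*b=b+a^2$ is incompatible with the vanishing $H^i(X_G)\cong H^i(X^G)=0$ for $i>2m+4$ coming from \Cref{Bredonthm15} and the freeness $X^G=\emptyset$, except when $m=3$, where it is excluded by the extra hypothesis (triviality of the action or $X\sim_\zz\cp^3\times S^4$). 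Thus the system is simple and $E_2^{k,l}\cong\zz_2[t]\otimes H^*(X)$.

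Because $H^{\mathrm{odd}}(X)=0$, every odd row of $E_2$ vanishes, so $d_2=0$ and $E_3=E_2$. The finiteness input is decisive: by \Cref{Bredonthm15} the polynomial tower $\{t^i\}\subset E^{*,0}$ cannot survive to $E_\infty$, so some generator must transgress onto a power of $t$. Counting bidegrees, the only candidate differentials on the algebra generators are $d_3(a)\in\{0,t^3\}$ and $d_3(b)\in\{0,t^3a\}$. The relation $d_3\circ d_3=0$ is the organizing principle: since $d_3(t^3a)=t^3\,d_3(a)$, one cannot simultaneously have $d_3(a)=t^3$ and $d_3(b)=t^3a$. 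This yields a clean trichotomy.

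Case A: $d_3(a)=t^3$. Applying $d_3$ to $a^{m+1}=0$ gives $(m+1)a^m t^3=0$, which (as $a^mt^3\neq0$) forces $m$ odd, and $d_3\circ d_3=0$ then forces $d_3(b)=0$, so $b$ and $a^2$ survive. Here $E_4$ collapses to $\zz_2[t]/(t^3)\otimes \zz_2[a^2,b]/((a^2)^{(m+1)/2},b^2)$, giving $I_1$ with $x=t$, $y=a^2$, $z=b$. Case B: $d_3(a)=0=d_3(b)$. Then $E_3=E_4=E_5$, and finiteness forces the next available differential $d_5(b)=t^5$; here $a$ survives as $y$ and $t^5=0$ yields $x^5=0$, producing $I_2$. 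Case C: $d_3(a)=0$, $d_3(b)=t^3a$. Then $d_3(a^jb)=t^3a^{j+1}$ kills $t^3a^{j+1}$ for $j<m$, while $a^mb$ becomes a permanent cocycle at $E_4$; finiteness forces the long transgression $d_{2m+5}(a^mb)=t^{2m+5}$, so $x^{2m+5}=0$, and $t^3a=d_3(b)$ being a boundary yields $x^3y=0$. A degree count in total degree $2m+2$ then leaves exactly the monomials $y^{m+1},x^2y^m,x^{2m+2}$, matching $I_3$.

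In each case the additive structure of $E_\infty$ is read off directly, and $x=\pi^*(t)$ together with the images of $a$ (or $a^2$) and $b$ under the edge homomorphisms provide the generators. The main obstacle is passing from $E_\infty$ to the genuine ring $H^*(X_G)$: resolving the hidden multiplicative extensions that produce the undetermined coefficients $\alpha_i,\beta_i$ in the relations $y^{m+1}+\cdots$, where the only constraints are degree and the list of surviving monomials. This is most delicate in Case C, where the behaviour of the long differential $d_{2m+5}$ and the compatibility of the relation $x^3y=0$ with the remaining products must be checked, and in the preliminary step of forcing the coefficient system to be simple for $m=3$, for which the integral hypothesis $X\sim_\zz\cp^3\times S^4$ is invoked to eliminate the twisted action.
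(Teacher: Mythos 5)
Your proposal is correct and follows essentially the same route as the paper: triviality of the $G$-action on $H^*(X;\zz_2)$ (the paper's Lemmas \ref{lem-acttri} and \ref{lem-acttri-Z}, with $m=3$ as the genuine exception covered by the extra hypothesis), the same trichotomy of nontrivial differentials $d_3(1\otimes a)\ne 0$, $d_5(1\otimes b)\ne 0$, or $d_3(1\otimes b)=t^3\otimes a$, non-collapse forced by \Cref{Bredonthm15}, the forced transgression $d_{2m+5}$ in the last case, and the filtration/edge-homomorphism analysis yielding the relations with undetermined coefficients. The only cosmetic differences are that you rule out the mixed case via $d_3d_3(1\otimes b)=t^6\otimes 1\ne 0$ where the paper computes $d_3d_3(1\otimes ab)$, and you exclude $g^*(b)=a^2$ by injectivity of $g^*$ rather than by evaluating $g^*$ on $a^mb$.
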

\begin{theorem}\label{thm-Z2faHPmS4}
Let $G=\zz_2$ act freely on a finitistic space $X\sim _2 \mathbb{H}P^m \times S^4$. When $m\equiv 3\pmod 4$, assume further that the action of $G$ on $H^*(X;\zz_2)$ is trivial or $X\sim _{\zz} \mathbb{H}P^m \times S^4$. Then $H^*(X/G)$ is isomorphic to one of the following graded commutative algebras 
\begin{alignat*}{2}
&\zz_2[x,y,z]/I_1, &&~\deg x=1, \deg y=8, \deg z=4;\\
& \zz_2[x,y]/I_2, &&~\deg x=1, \deg y=4.
\end{alignat*}
Where the ideal $I_1$ and $I_2$ are as follows:

$(\mathrm{1})$ $I_1=\langle x^5,y^{\frac{m+1}{2}}+\beta x^4y^{\frac {m-1}{2}}z, z^2+\gamma y+\alpha x^4z\rangle$, where $\alpha,\beta,\gamma\in \zz_2$ and $m$ is odd. If $m=1$, then $\beta=\gamma=0$. 

$(\mathrm{2})$ $I_2=\langle x^5,y^{m+1}\rangle$.
\end{theorem}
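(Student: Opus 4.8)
The plan is to run the Leray--Serre spectral sequence of the Borel fibration $X\hookrightarrow X_G\xrightarrow{\pi}B_G$ with $B_G=\rp^{\infty}$, exploiting that the action is free. Since $G=\zz_2$ acts freely, Proposition~\ref{Loringcor96} identifies $H^*(X/G)$ with $H^*(X_G)$; and since $X^G=\varnothing$ while $H^i(X)=0$ for $i>4m+4$, Proposition~\ref{Bredonthm15} gives $H^i(X_G)=0$ for all $i>4m+4$. This finiteness is the engine that forces the decisive differentials to be nonzero. I would first settle the coefficient system: by Lemmas~\ref{lem-acttri} and~\ref{lem-acttri-Z} (for $m\equiv 3\pmod 4$ this uses the extra hypothesis that the $G$-action on $H^*(X;\zz_2)$ is trivial or $X\sim_{\zz}\hp^m\times S^4$, which is exactly what excludes the nonsimple local system $g^*a=a+b,\ g^*b=b$), the action of $\pi_1(B_G)$ on $H^*(X)$ is trivial, so $E_2^{k,l}\cong H^k(B_G)\otimes H^l(X)=\zz_2[t]\otimes\bigl(\zz_2[a,b]/\langle a^{m+1},b^2\rangle\bigr)$ with $\deg t=1$ and $\deg a=\deg b=4$.

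Next I would compute the differentials. Since $H^*(X)$ is concentrated in degrees divisible by $4$, the targets $E_r^{r,5-r}$ of $d_r$ out of $E_r^{0,4}$ vanish for $r=2,3,4$, so $E_5=E_2$ and the first possibly nonzero differential is $d_5\colon\langle a,b\rangle=E_5^{0,4}\to E_5^{5,0}=\langle t^5\rangle$. If $d_5=0$ then $a,b$ are permanent cocycles and the entire tower $t^k$ survives, contradicting $H^i(X_G)=0$ for $i>4m+4$; hence $d_5\neq 0$ and $\ker d_5=\langle c\rangle$ is one--dimensional. The governing dichotomy is the square of the surviving class: the elements of $H^4(X)$ with $c^2=0$ are exactly the multiples of $b$, whereas $a$ and $a+b$ have $c^2=a^2\neq 0$. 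Propagating $d_5$ from its values on $a,b$ by the Leibniz rule and passing to $E_6$, I would treat $c=a$, $c=b$, $c=a+b$ in turn. For $c=a$ one finds $E_\infty=\zz_2[t,a]/\langle t^5,a^{m+1}\rangle$ for every $m$, which is the associated graded of $\zz_2[x,y]/I_2$ in case~$(\mathrm{2})$. For $c=b$ or $c=a+b$ the odd powers of $a$ are killed while $a^2$ survives as a degree--$8$ class; here the even power $a^m$ (when $m$ is even) yields an unkillable infinite $t$--tower, which is precisely why case~$(\mathrm{1})$ requires $m$ odd. In those cases $E_\infty=\zz_2[t,a^2,c]/\langle t^5,(a^2)^{(m+1)/2},c^2+\gamma a^2\rangle$ with $\gamma=0$ for $c=b$ and $\gamma=1$ for $c=a+b$, matching the associated graded of $\zz_2[x,y,z]/I_1$ with $\deg y=8,\ \deg z=4$.

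Finally I would resolve the multiplicative extensions, lifting the $E_\infty$--relations to genuine identities in $H^*(X_G)$. Put $x=\pi^*(t)$ and choose lifts $y,z$ of $a^2,c$ (or a lift $y$ of $a$ in case~$(\mathrm{2})$). The leading terms reproduce $x^5=0$, but $z^2$ and $y^{(m+1)/2}$ may differ from their $E_\infty$--values by higher--filtration classes lying in $F_1^{8}=\langle x^4z\rangle$ and $F_1^{4m+4}=\langle x^4y^{(m-1)/2}z\rangle$; these ambiguities are exactly the parameters $\alpha,\beta$ of $I_1$, while $\gamma$ records whether $c^2$ is $0$ or $a^2$. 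To pin them down I would apply the Steenrod squares, which commute with the edge homomorphisms and respect the filtration: from $\mathrm{Sq}^4z=z^2$, $\mathrm{Sq}^1t=t^2$ (hence $\mathrm{Sq}^4x^4=x^8$) and the Cartan formula one reads off the admissible $\alpha,\beta,\gamma$, including the degenerations at $m=1$ (where $a^2=0$ forces $\beta=\gamma=0$). The parallel point in case~$(\mathrm{2})$ is to check that the a priori extension $y^{m+1}=\epsilon\,x^4y^m$ is either trivial or absorbed by the substitution $y\mapsto y+x^4$. This extension bookkeeping is the main obstacle; once completed, the isomorphism type of $H^*(X/G)$ is forced to be one of the two listed algebras $\zz_2[x,y,z]/I_1$ and $\zz_2[x,y]/I_2$.
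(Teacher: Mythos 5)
Your proposal follows essentially the same route as the paper: the same Borel-fibration spectral sequence with the coefficient system trivialized by Lemmas \ref{lem-acttri} and \ref{lem-acttri-Z}, the same observation that $H^*(X)$ being concentrated in degrees divisible by $4$ forces $E_2^{*,*}=E_5^{*,*}$, the same three-way case split on $\ker d_5\subset H^4(X)$ (your $c=a$, $c=b$, $c=a+b$ are exactly the paper's cases (iii), (ii), (i)), the same identification of $E_\infty$, and the same filtration analysis producing the relations with coefficients $\alpha,\beta,\gamma$. Two of your deviations are harmless: you derive the constraint ``$m$ odd'' in the $c=b,a+b$ cases from the unkillable tower $t^k\otimes a^m$ and \Cref{Bredonthm15}, where the paper gets the quicker Leibniz contradiction $0=d_5(1\otimes a^{m+1})=t^5\otimes a^m$; these are equivalent in substance.

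The one step you should revise is the final Steenrod-square stage. The theorem does not assert specific values of $\alpha,\beta,\gamma$; it lists the quotient algebras with these parameters ranging freely over $\zz_2$ (constrained only at $m=1$), and the filtration argument you already ran is all that is needed. Your claim that one ``reads off the admissible $\alpha,\beta,\gamma$'' from $\mathrm{Sq}^4z=z^2$ and the Cartan formula does not work: for a degree-$4$ class $z$ the identity $\mathrm{Sq}^4z=z^2$ is automatic and carries no information, and likewise $\mathrm{Sq}^4(x^4)=x^8=0$ is vacuous since $x^5=0$; indeed the parameters genuinely cannot be pinned down from the hypotheses, as both $\gamma=0$ and $\gamma=1$ actually occur (from the two $d_5$ patterns $d_5(1\otimes b)=0$ versus $d_5(1\otimes a)=d_5(1\otimes b)\neq 0$). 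So drop the ``pinning down'' step; it is unnecessary for the statement and unsubstantiated as written. One further caution, which applies equally to the paper's own text: in your case (2) the substitution $y\mapsto y+x^4$ absorbs the potential extension $y^{m+1}=\epsilon x^4y^m$ only when $m$ is even, since in characteristic $2$ with $x^8=0$ one has $(y+x^4)^{m+1}=y^{m+1}+(m+1)x^4y^m$, which is inert for $m$ odd; your hedge ``either trivial or absorbed'' mirrors the paper's treatment but does not by itself settle the odd-$m$ subcase.
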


\begin{example}	
When $m$ is odd there are standard free involutions of $\rp^m$ and $\cp^m$. The map
\[
[x_{0},x_{1}, \dots, x_{m-1}, x_{m}]\mapsto [-x_{1},x_{0}, \dots,  -x_{m}, x_{m-1}]
\]  
defines a free involution of $\rp^m$ with the orbit space $\rp^m/\zz_2\sim_2 S^1\times \cp^{\frac{m-1}{2}}$ (\cite[Example 3.3]{SHTSingh2017}). Quotienting by the product of the above map with the trivial $\zz_2$-action on $S^4$, the mod $2$ cohomology algebra of the orbit space $\rp^m\times S^4/\zz_2$ is that of $S^1\times \cp^{\frac{m-1}{2}}\times S^4$, which account for case (1) in Theorem \ref{thm-Z2faRPmS4}.

Similarly, the map 
\[
[z_{0}:z_{1}: \dots : z_{m-1}: z_{m}]\mapsto [-\overline{z_{1}}:\overline{z_{0}}:  \dots:  -\overline{z_{m}}:\overline{z_{m-1}}]
\]
defines a free quaternionic involution of $\cp^m$ with  the orbit space  $\cp^m/\zz_2\sim_2\rp^2\times \hp^{\frac{m-1}{2}}$ (\cite[Example 3.7]{SHTSingh2017}). Quotienting by the product of the above map with the trivial $\zz_2$-action on $S^4$, the mod $2$ cohomology algebra of the orbit space $\cp^m\times S^4/\zz_2$ is that of $\rp^2\times \hp^{\frac{m-1}{2}}\times S^4$, which account for case (1) in Theorem \ref{thm-Z2faCPmS4}. 

The same construction as above does not apply to $\hp^m\times S^4$. For $m=1$, namely, $X\sim _2 S^4\times S^4$, by \Cref{Bredoncor75} we see that there is no free involution on $X$. For $m>1$, $\hp^m$ has the fixed point property (\cite[Example 4L.4]{Hatcher2002}), where the fixed point property of a topological space means that every continuous map (not necessarily a self-homeomorphism) from the topological space to itself has a fixed point. 
\end{example}

\begin{example}
Consider the trivial $\zz_2$-action on $\fp^m$ and the antipodal action of $\zz_2$ on $S^4$, then the orbit space of the free involution on $\fp^m\times S^4$ is $\fp^m\times \rp^4$. The cohomology algebra of $\fp^m\times \rp^4$ account for the cases (2) of main Theorems \ref{thm-Z2faRPmS4}, \ref{thm-Z2faCPmS4} and \ref{thm-Z2faHPmS4} with all coefficients zero.

When $\alpha_1=\alpha_2=0$, \Cref{thm-Z2faCPmS4} (2)  describes the cohomology ring of the Dold manifold $P(4,m)$. The Dold manifold $P(n,m)$ is the orbit space of $S^n\times \cp^m$ by the free involution
that acts antipodally on $S^n$ and by complex conjugation on $\cp^m$. Following \cite{Dold1956}, the ring structure of $H^*(P(n,m))$ is given by 
\[
H^*(P(n,m))=\zz_{2}[x,y]/\langle x^{n+1}, y^{m+1}\rangle, 
\]
where $\deg x=1, \deg y=2$.
\end{example}

An open question coming from Theorems \ref{thm-Z2faRPmS4}, \ref{thm-Z2faCPmS4} and \ref{thm-Z2faHPmS4} is to search for possible more exotic free involutions and identify the respective cohomology algebras. 

The proofs of the above three main theorems are based on spectral sequence arguments. To make the calculation of spectral sequence easier, we firstly prove the following  general result, which is an extension of \cite[Lemma 3.1]{SHTSingh2017}.

\begin{lemma}\label{lem-acttri} 
Let $G=\mathbb{Z}_2$ act freely on a finitistic space $X\sim _2 \mathbb{F}P^m \times S^n$, where $\mathbb{F}=\mathbb{R}, \mathbb{C}$ or $\mathbb{H}$. Let $\lambda=1$, $2$ or $4$, respectively. Then the action of $G$ on $H^*(X;\zz_2)$ is trivial with possibly two exceptions: 

{\rm (i)} $m\equiv 3\pmod 4$ and $n=\lambda$; 

{\rm (ii)} $\lambda m=n+j$, $j\equiv \lambda \pmod{2\lambda}$, $0\leqslant j<n$ and $\frac{n}{\lambda}\equiv 0 \pmod{2}$. 
\end{lemma}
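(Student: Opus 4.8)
The plan is to treat the generator $g$ of $G$ as a degree-preserving ring automorphism $g^*$ of $H^*(X)=\zz_2[a,b]/\langle a^{m+1},b^2\rangle$ with $(g^*)^2=\mathrm{id}$, classify all such automorphisms, and then discard those incompatible with freeness. First I would determine $g^*(a)$: a monomial $a^ib^j$ has degree $\lambda$ only for $(i,j)=(1,0)$ unless $n=\lambda$, so for $n\ne\lambda$ the degree-$\lambda$ part is $\langle a\rangle$ and $g^*(a)=a$, while for $n=\lambda$ it is $\langle a,b\rangle$ and $g^*$ may mix $a$ and $b$. Then I would determine $g^*(b)$: the degree-$n$ part is $\langle b\rangle$, enlarged to $\langle b,a^{n/\lambda}\rangle$ exactly when $\lambda\mid n$ and $n/\lambda\le m$. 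Imposing $g^*(b)^2=0$, $g^*(a)^{m+1}=0$, invertibility and $(g^*)^2=\mathrm{id}$ then cuts the candidates down to: the identity; for $n=\lambda$ the map $g^*(a)=a+b$, $g^*(b)=b$, which forces $m$ odd through $(a+b)^{m+1}\equiv(m+1)a^mb\pmod{b^2}$; and for $n\ne\lambda$ with $\lambda\mid n$ the map $g^*(b)=b+a^{n/\lambda}$, $g^*(a)=a$, whose relation $g^*(b)^2=0$ forces $a^{2n/\lambda}=0$, i.e. $n/\lambda\le m<2n/\lambda$.

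The second step is to discard the nontrivial candidates using $X^G=\emptyset$. The main tool is \Cref{Bredonthm74}: with $D=\lambda m+n$ the top degree, $H^D(X)=\zz_2\langle a^mb\rangle$, so if $D=2N$ is even and some $c\in H^N(X)$ has $c\,g^*(c)\ne0$ then $X^G\ne\emptyset$, a contradiction. For $n=\lambda$ (so $D=\lambda(m+1)$ is even when $m$ is odd), taking $c=a^{(m+1)/2}$ gives $c\,g^*(c)=\tfrac{m+1}{2}\,a^mb$, which is nonzero exactly when $m\equiv1\pmod4$; hence the mixing automorphism is eliminated unless $m\equiv3\pmod4$, which is exactly exception (i). For $n\ne\lambda$, the half-top-degree monomial $c=a^{(m-n/\lambda)/2}b$ exists with integer exponents precisely when $m+n/\lambda$ is even, and then $c\,g^*(c)=a^mb\ne0$; so the automorphism $g^*(b)=b+a^{n/\lambda}$ is eliminated whenever $m+n/\lambda$ is even.

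It remains to treat $m+n/\lambda$ odd, where $D$ is odd (for $\ff=\rr$) or no half-top-degree class exists, so \Cref{Bredonthm74} says nothing. Here I would pass to the Leray--Serre spectral sequence of the Borel fibration $X\hookrightarrow X_G\to B_G=\rp^\infty$ with the local system given by $g^*$, where $E_2^{k,l}=H^k(\zz_2;H^l(X))$. Under $g^*(b)=b+a^{n/\lambda}$ the row $l=n$ becomes a free $\zz_2[\zz_2]$-module (generated by $b$, with $a^{n/\lambda}=(g^*-1)b$), hence cohomologically trivial and concentrated in the column $k=0$; likewise every row containing a non-invariant class $a^ib$ collapses to $k=0$. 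Writing $t\in H^1(\rp^\infty)=E_2^{1,0}$ for the polynomial generator, the differentials are $\zz_2[t]$-linear derivations, and the collapse of these rows deletes precisely the differentials that in the trivial-action case truncate the infinite $t$-towers. Applying $\zz_2[t]$-linearity to $t\cdot a^{n/\lambda}=0$ (valid because $a^{n/\lambda}$ sits in the torsion row $l=n$) forces $\tfrac{n}{\lambda}\,d_r(a)=0$, since $d_r(a^{n/\lambda})=\tfrac{n}{\lambda}\,a^{n/\lambda-1}d_r(a)$ while $t\,a^{n/\lambda}=0$. So when $n/\lambda$ is odd one gets $d_r(a)=0$, making $a$ a permanent cocycle whose whole tower $\{t^ka\}$ survives to $E_\infty$; this yields $H^i(X_G)\ne0$ for arbitrarily large $i$, contradicting $H^i(X_G)\cong H^i(X^G)=0$ for $i>D$ from \Cref{Bredonthm15} and freeness. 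Thus a nontrivial action with $n\ne\lambda$ needs $n/\lambda$ even, which together with $n/\lambda\le m<2n/\lambda$ and $m+n/\lambda$ odd is exactly condition (ii) after setting $j=\lambda\bigl(m-\tfrac{n}{\lambda}\bigr)$.

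The hardest part is this last spectral-sequence bookkeeping. One must verify carefully that the nontrivial local system really removes every differential capable of killing the towers, that the residual invariant generator $a$ (rather than some other class) is the one forced to persist, and that its $t$-multiples are never hit --- the non-invariance of the classes $a^ib$ that could have hit them being exactly what the mixing produces. The delicate point is that $\zz_2[t]$-linearity together with the single relation $a^{2n/\lambda}=0$ obstructs the candidate truncating differential precisely according to the parity of $n/\lambda$; getting this parity split to match the two listed exceptions, uniformly over $\ff=\rr,\cc,\hh$ and all admissible $m,n$, is where the real work lies.
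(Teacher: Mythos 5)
Your first two steps coincide with the paper's own proof almost verbatim: the same classification of involutive algebra automorphisms ($g^*(a)=a+b$ possible only when $n=\lambda$ and forced to have $m$ odd, $g^*(b)=b+a^{n/\lambda}$ forced into the range $n/\lambda\leqslant m<2n/\lambda$, with $g^*(b)=a^{n/\lambda}$ excluded by invertibility), and the same eliminations via \Cref{Bredonthm74} with the identical witnesses $c=a^{(m+1)/2}$ (yielding exception (i)) and $c=a^{(m-n/\lambda)/2}b=a^{j/(2\lambda)}b$ when $j\equiv 0\pmod{2\lambda}$; your local-coefficient computation (the twisted rows $n\leqslant l\leqslant n+j$ vanish in positive columns) is also exactly the paper's. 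One smaller omission: for $m=1$, $n=\lambda$ your classification is incomplete --- the swap $a\leftrightarrow b$ passes all of your algebraic constraints on $\zz_2[a,b]/\langle a^2,b^2\rangle$ --- and the paper instead disposes of all nontrivial actions in that case at once by \Cref{Bredoncor75}.

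The genuine gap is in your last step, the case $j\equiv\lambda\pmod{2\lambda}$ with $n/\lambda$ odd. Your forcing of $d_{\lambda+1}(1\otimes a)=0$ is salvageable (by degree reasons $d_r(1\otimes a)$ can only be nonzero for $r=\lambda+1$, and the $t$-linearity trick applied to $t\cdot(1\otimes a^{n/\lambda})=0\in E_2^{1,n}$ does kill it when $n/\lambda$ is odd), but permanence of $1\otimes a$ is not the issue: to contradict \Cref{Bredonthm15} you must show the classes $t^k\otimes a$ are never \emph{hit}, and your stated mechanism --- that every row containing a non-invariant class $a^ib$ collapses --- does not cover the dangerous sources. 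Since $g^*(a^ib)=a^ib+a^{i+n/\lambda}$ and $a^{i+n/\lambda}=0$ exactly when $\lambda i>j$, the classes $a^ib$ in the rows $n+j+\lambda\leqslant l\leqslant 2n+j$ are \emph{invariant}; those rows remain full constant rows, and a differential $d_r$ with $r=n+\lambda i-\lambda+1$ could carry $t^{k}\otimes a^ib$ onto $t^{k+r}\otimes a$ and wipe out the entire tail of your tower. Nothing in your sketch excludes this, and you cannot Leibniz such differentials down to $d_r(b)$ because $b$ is not an $E_2$-class in the twisted situation ($\tau(b)=a^{n/\lambda}\neq 0$). The paper circumvents precisely this by working at the top of the spectral sequence instead of the bottom: \Cref{Bredoncor72} applied to $c=a^{(j-\lambda)/(2\lambda)}b$ produces the permanent cocycle $1\otimes a^{(n+j-\lambda)/\lambda}b$ in row $2n+j-\lambda$, where the only possible incoming differential is $d_{\lambda+1}$ from the top row $2n+j$; that one vanishes by the factorization $t^k\otimes a^{(n+j)/\lambda}b=(t^k\otimes a^{j/\lambda+1}b)(1\otimes a^{n/\lambda-1})$, whose first factor transgresses into the vanished twisted row $n+j$ and whose second factor is annihilated because $n/\lambda-1$ is even, while every $d_r$ with $r>\lambda+1$ has its source above the top degree $2n+j$. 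So the parity of $n/\lambda$ enters through $a^{n/\lambda-1}$, not through $d_{\lambda+1}(a)$, and the high-row choice is what makes the ``never hit'' verification a two-line degree argument; to complete your route you would instead have to rule out all differentials from the invariant $b$-rows into row $\lambda$, which is exactly the work your proposal defers and does not supply.
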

\begin{proof}
The mod $2$ cohomology algebra $H^*(X;\zz_2)$ has two generators $a$ and $b$ satisfying $a^{m+1}=0$ and $b^2=0$. Let $g$ be the generator of $G=\zz_2$. By the naturality of the cup product, we get
\[
g^*(a^ib)=g^*(a)^ig^*(b) \text{~for all $i\geqslant 0$,}
\]
where $g^*$ is the mod $2$ cohomology isomorphism $H^*(X;\zz_2)\to H^*(X;\zz_2)$. 

Firstly, we claim that 
\begin{center}
$g^*(a)=a$, except the case: $m\equiv 3\pmod 4$ and $n=\lambda$.
\end{center}

$\blacktriangleright$ If $\deg a=\lambda\ne n=\deg b$, we clearly have $g^*(a)=a$. 

$\blacktriangleright$ For $m=1$, $n=\lambda$, the mod $2$ cohomology of $X$ is the same as of
\[
S^1\times S^1, ~S^2\times S^2 ~\text{or~} S^4\times S^4.
\]
If $G$ acts nontrivially on $H^*(X;\zz_2)$, by \Cref{Bredoncor75}, we have $X^{G}$ is non-empty, which contradicts the action being free.

$\blacktriangleright$ For $m>1$ and $m\equiv 1\pmod 4$, $n=\lambda$. Since the orders of $a$ and $b$ are $m+1$ and $2$, respectively, it follows that $g^*(a)\ne b$. Let $c=a^{\frac{m+1}{2}}\in H^{\lambda\frac{m+1}{2}}(X;\zz_2)$. If $g^*(a)=a+b$, then 
\[
cg^*(c)=a^{\frac{m+1}{2}}(a+b)^{\frac{m+1}{2}}=a^{\frac{m+1}{2}}(a^{\frac{m+1}{2}}+\tfrac{m+1}{2}a^{\frac{m-1}{2}}b)=a^mb\ne 0. 
\] 
By \Cref{Bredonthm74}, $X^G$ is non-empty, which contradicts the action being free. So $g^*(a)=a$.
 

$\blacktriangleright$ For $m>1$ even, $n=\lambda$. If $g^*(a)=a+b$, then $a^{m+1}=0$ gives $0=g^*(a^{m+1})=(a+b)^{m+1}=(m+1)a^mb=a^mb$, a contradiction. 

Therefore, except for the case when $m\equiv 3\pmod 4$ and $n=\lambda$, we have $g^*(a)=a$ and \Cref{lem-acttri} is reduced to show that
\[
g^*:H^n(X;\zz_2)\to H^n(X;\zz_2)
\]
is the identity isomorphism. 
	
	If \underline{$\lambda\nmid n$ or $\lambda m<n$}, the cohomology group $H^j(X;\zz_2)$ is $\zz_2$ or zero for any $j\geqslant 0$,  \Cref{lem-acttri} is obvious.  Thus we need to consider that \underline{$\lambda \mid n$ and $\lambda m\geqslant n$, $m>1$}. 
	
	If $G$ acts nontrivially on $H^*(X;\zz_2)$, then we get $g^*(b)=a^{\frac{n}{\lambda}}$ or $g^*(b)=a^{\frac{n}{\lambda}}+b$. If $g^*(b)=a^{\frac{n}{\lambda}}$, then $g^*(a^mb)=a^{m+\frac{n}{\lambda}}=0$. Since $g^*$ is an isomorphism, this gives $a^mb=0$, which is a contradiction. So we must have 
	\begin{equation}\label{eq-g*banlambda+b}
		g^*(b)=a^{\frac{n}{\lambda}}+b. 
	\end{equation}
	From now on to the end of the proof of \Cref{lem-acttri}, we show that \eqref{eq-g*banlambda+b} doesn't hold.
	
	\begin{itemize} 
		\item If \underline{$\lambda \mid n$ and $\lambda m\geqslant 2n$},  we have $0=g^*(b^2)=(a^{\frac{n}{\lambda} }+b)^2=a^{\frac{2n}{\lambda}}$, a contradiction. Thus \eqref{eq-g*banlambda+b} can not happen.
		\item In the following, we assume that \underline{$\lambda \mid n$ and $2n>\lambda m\geqslant n$}. 
	\end{itemize}
	
	(1) For the case $\lambda m=n+j$, $j\equiv 0 \pmod{2\lambda} \text{~and~} 0\leqslant j<n$, set $c=a^{\frac{j}{2\lambda}}b\in H^{\frac{\lambda m+n}{2}}_{}(X;\zz_2)$. We have $cg^\ast(c)=a^mb\ne 0$, which contradicts \Cref{Bredonthm74}. Thus \eqref{eq-g*banlambda+b} can not happen.
	
	(2) Now, let's consider the case $\lambda m=n+j$, $j\equiv \lambda \pmod{2\lambda} \text{~and~} 0\leqslant j<n$. 
	
	When $l\ne n, n+\lambda ,\dots ,n+j$, the coefficient sheaf $\mathcal{H}^l(X;\zz_2)$ is constant with stalk $H^l(X;\zz_2)$ isomorphic to $\zz_2$ or zero. Then $g^*:H^l(X;\zz_2)\to H^l(X;\zz_2)$ is clearly the identity isomorphism, so $\pi_1(B_G)\cong G$ acts trivially on $H^{l}(X;\zz_2)$, and the $E_2$-term of the Leray-Serre spectral sequence associated to the Borel fibration $X\hookrightarrow X_G \to B_G$ is
	\begin{equation}\label{eq-E2klHkHl}
		E_2^{k,l}\cong H^k(B_{G};\zz_2)\otimes H^l(X;\zz_2),  k\geqslant 0, l\ne n, n+\lambda, n+2\lambda,\dots, n+j.
	\end{equation}

To consider the $G$-action on $H^l(X;\zz_2)$ when $l=n, n+\lambda ,\dots ,n+j$, recall that $B_{G}=\mathbb{R}P^\infty $ is a connected CW-complex with one cell in each dimension, 
	\[
	\mathbb{R}P^\infty =e^0\cup e^1 \cup e^2 \cup \cdots.
	\] 
	$E_{G}=S^\infty$ is the universal covering space of $\mathbb{R}P^\infty$, and the corresponding cell decomposition is 
	\[
	S^\infty =e_+^0\cup e_-^0\cup e_+^1\cup e_-^1\cup e_+^2\cup e_-^2\cup \cdots,
	\] 
	with $e_\pm^i$ being the upper and lower hemispheres of the $i$-sphere.  According to \cite[\S5.2.1]{DavisKirk}, the action of $\pi_1(B_{G})\cong\mathbb{Z}_2$ on $S^{\infty}$ gives $C_*(S^{\infty})$ the structure of a $\zz[\zz_2]$-chain complex, where 
	\[
	\mathbb{Z}[\zz_2]=\zz[g]/\langle g^2-1\rangle=\{a_0+a_1g\mid a_0, a_1\in\zz\}
	\]
	denotes the group ring. A basis for the free (rank 1) $\zz[\zz_2]$-module $C_i(S^{\infty})$ is $e_+^i$. With the choice of the basis, the $\zz[\zz_2]$-chain complex $C_*(S^{\infty})$ is isomorphic to 
	\[
	\cdots \to\zz[\zz_2] \to \cdots \xrightarrow[]{\,1-g\,} \zz[\zz_2] \xrightarrow[]{\,1+g\,} \zz[\zz_2] \xrightarrow[]{\,1-g\,} \zz[\zz_2] \to 0.
	\] 
	Let 
	\[
	\tau =1-g^*,~~\sigma =1+g^*. 
	\]   
	The cochain complex ${\rm Hom}_{\zz[\zz_2]}(C_*(S^{\infty}),H^l(X;\zz_2))$ is isomorphic to 
	\[
	\cdots \leftarrow H^l(X;\zz_2) \leftarrow \cdots \xleftarrow[]{~\tau~} H^l(X;\zz_2) \xleftarrow[]{~\sigma~} H^l(X;\zz_2) \xleftarrow[]{~\tau~} H^l(X;\zz_2) \leftarrow 0.
	\]  
	So the $E_2$-term of the Leray-Serre spectral sequence associated to the fibration $X\hookrightarrow X_{G}\to B_{G} $ is given by
	\begin{align*}
		E_2^{k,l} & =H^k(B_{G};\mathcal{H}^l(X;\zz_2))\cong H^k\hskip -1mm\left({\rm Hom}_{\zz[\zz_2]}(C_*(S^{\infty}),H^l(X;\zz_2))\right)\\
		& \cong
		\begin{cases}
			\ker\tau , & k=0,\\
			\ker\tau /\text{im }\sigma , & k>0\text{~even},\\
			\ker\sigma /\text{im }\tau , & k>0\text{~odd}.
		\end{cases}
	\end{align*}
	For $l=n, n+\lambda ,\dots ,n+j$, $H^l(X;\zz_2)\cong\zz_2\oplus\zz_2$ is generated by a basis $a^{\frac{l}{\lambda}}$, $a^{\frac{l-n}{\lambda}}b$. Note that $\tau =\sigma$ and the matrix representation of $\tau$ with the natural basis is 
	\(
	\begin{bmatrix}
		0 & 1\\ 
		0 & 0
	\end{bmatrix}
	\).
	It is easy to see that
	\begin{align}\label{eq-E2{k,l}}
		E_2^{k,l} & \cong
		\begin{cases}
			0 , & k>0\text{~and~}l=n, n+\lambda ,\dots ,n+j,\\
			\mathbb{Z}_2 , & k=0\text{~and~}l=n, n+\lambda ,\dots ,n+j.
		\end{cases}
	\end{align} 
	
	If $X\sim _2\cp^m\times S^n$, $\deg a=\lambda =2$, $\deg b=n$,  $n$ being even implies that $E_2^{k,l}=0$ for $l$ odd. This gives $d_2=0:E_2^{k,l}\to E_{2}^{k+2,l-1}$ and hence $E_2^{*,*}=E_3^{*,*}$. If  $X\sim _2\hp^m\times S^n$, $\lambda=4$, $\lambda$ dividing $n$ implies that $E_r^{k,l}=0$ for $4 \nmid l$. This gives $d_r=0:E_r^{k,l}\to E_{r}^{k+r,l-r+1}$ for $2\leqslant r\leqslant 4$ and hence $E_2^{*,*}=E_5^{*,*}$. That is to say, for $X\sim _2\fp^m\times S^n$, where $\mathbb{F}=\mathbb{R}$, $\mathbb{C}$ or $\mathbb{H}$, we have 
	\begin{equation}\label{eq-E2=Elambda+1}
		E_2^{*,*}=E_{\lambda +1} ^{*,*}. 
	\end{equation}
	
	If $\frac{n}{\lambda}\equiv 1 \pmod{2}$, by \eqref{eq-E2klHkHl}, \eqref{eq-E2=Elambda+1} and the derivation property of the differential,  
\[
d_{\lambda +1}(1\otimes a^{\frac{n}{\lambda}-1})=(\frac{n}{\lambda}-1)(1\otimes a^{\frac{n}{\lambda}-2})d_{\lambda+1}(1\otimes a)=0.
\] 
Note that, $d_{\lambda +1}:E_{\lambda +1}^{k,n+j+\lambda}\to E_{\lambda +1}^{k+\lambda+1,n+j}$ is trivial as $E_{\lambda +1}^{k+\lambda +1,n+j}=0$ (by \eqref{eq-E2{k,l}}) for all $k$, particularly, $d_{\lambda +1}(t^k\otimes a^{\frac{j}{\lambda}+1}b)=0$. By \eqref{eq-E2=Elambda+1}  and \eqref{eq-E2klHkHl},  
\[
E_{\lambda+1}^{k,2n+j}=E_2^{k,2n+j}\cong H^{k}(B_{G};\zz_2)\otimes H^{2n+j}(X;\zz_2) 
\]
is generated by the unique element $t^k\otimes a^{\frac{n+j}{\lambda}}b$. Furthermore, by the multiplicative structure of the spectral sequence, we have $d_{\lambda +1}(t^k\otimes a^{\frac{n+j}{\lambda}}b)=d_{\lambda +1}((t^k\otimes a^{\frac{j}{\lambda}+1}b)(1\otimes a^{\frac{n}{\lambda}-1}))=0$. Consequently, 
	\[
	d_{\lambda +1}:E_{\lambda +1}^{k,2n+j}\to E_{\lambda +1}^{k+\lambda +1,2n+j-\lambda } \text{~is trivial for all ~} k.
	\]
	
%

Set $c=a^{\frac{j-\lambda}{2\lambda}}b$. Then, by \Cref{Bredoncor72}, 
\[
1\otimes cg^\ast (c)=1\otimes a^{\frac{n+j-\lambda}{\lambda}}b\in E_2^{0,2n+j-\lambda} 
\]
is a permanent cocycle. By degree reasons, $t\otimes 1\in E_2^{1,0}$ is a permanent cocycle, therefore $t^k\otimes a^{\frac{n+j-\lambda}{\lambda}}b\in E_2^{k,2n+j-\lambda}$ is also a permanent cocycle for all $k$. 
	
	By \eqref{eq-E2=Elambda+1}, when $\lambda=2$ or $4$, $d_r=0:E_r^{k,l}\to E_{r}^{k+r,l-r+1}$ for $2\leqslant r< \lambda +1$. Moreover, 
\[
d_{\lambda +1}:E_{\lambda +1}^{k,2n+j}\to E_{\lambda +1}^{k+\lambda +1,2n+j-\lambda }
\]
 is trivial for all $k$ and $\lambda=1, 2$ or $4$, hence $t^k\otimes a^{\frac{n+j-\lambda }{\lambda }}b\in E_r^{k,2n+j-\lambda }$ is not hit by any $d_{r}$-coboundaries,  $2\leqslant r\leqslant \lambda +1$. Since $X$ has the mod $2$ cohomology of $\mathbb{F}P^m \times S^n$, for $\lambda m=n+j$, we have $H^l(X;\zz_2)=0$ for $l>2n+j$. As a result, $d_r:E_r^{k-r,2n+j+r-\lambda -1}\to E_{r}^{k,2n+j-\lambda }$ is trivial for $\lambda +1<r$ as $E_2^{k-r,2n+j+r-\lambda -1}=0$. So $t^k\otimes a^{\frac{n+j-\lambda }{\lambda }}b\in E_r^{k,2n+j-\lambda }$ is not hit by any $d_r$-coboundaries, $r\geqslant 2$. Then, $t^k\otimes a^{\frac{n+j-\lambda}{\lambda}}b$ survives to a nontrivial element in $E_{\infty}$. However, this contradicts \Cref{Bredonthm15}. 
Thus \eqref{eq-g*banlambda+b}  doesn't happen. Therefore, the action of $G$ on $H^*(X;\zz_2)$ is trivial.
\end{proof}

As stated in \Cref{lem-acttri}, there are two possible exceptional cases in which we can not prove that the action of $G$ on $H^*(X;\zz_2)$ is trivial. Alternatively, we prove this when $X$ is additionally assumed to have the integral cohomology of $\mathbb{F}P^m \times S^n$ for $\ff=\cc$ or $\hh$.
The following proof is inspired by the discussions in \cite[Theorem 4.5 and Lemma 5.1]{PDey2018}.

\begin{lemma}\label{lem-acttri-Z}
Let $G=\mathbb{Z}_2$ act freely on a finitistic space $X\sim _{\zz} \mathbb{F}P^m \times S^n$, where $\mathbb{F}=\mathbb{C}$ or $\mathbb{H}$. Then the action of $G$ on $H^*(X;\zz_2)$ is trivial.
\end{lemma}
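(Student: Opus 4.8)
The plan is to lift the question from mod $2$ to integral cohomology, where the requirement that $g^*$ be a \emph{ring} automorphism is far more rigid. Since $\ff=\cc$ or $\hh$, the space $\fp^m\times S^n$ has torsion-free integral cohomology, so the hypothesis $X\sim_\zz\fp^m\times S^n$ gives $H^*(X;\zz)=\zz[a,b]/\langle a^{m+1},b^2\rangle$ with $\deg a=\lambda$, $\deg b=n$, free abelian in each degree. By the universal coefficient theorem the mod $2$ reduction then identifies $H^*(X;\zz_2)$ with $H^*(X;\zz)\otimes\zz_2$ and intertwines the integral and mod $2$ actions of $g^*$. Hence it suffices to show that the \emph{integral} automorphism $g^*$ fixes $a$ and $b$ up to sign. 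By \Cref{lem-acttri} the mod $2$ action is already trivial outside the two exceptional cases (i) and (ii), so I only need to treat those; in each, the defining numerical hypotheses guarantee that the relevant cohomology groups have rank exactly $2$, which is what makes the rigidity argument run.

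\textbf{Case (i): $m\equiv3\pmod4$, $n=\lambda$.} Here $a$ and $b$ both lie in degree $\lambda$, so I write $g^*(a)=\alpha a+\beta b$ and $g^*(b)=\gamma a+\delta b$ with $\alpha,\beta,\gamma,\delta\in\zz$. Since $m\geqslant3$, the group $H^{2\lambda}(X;\zz)=\zz a^2\oplus\zz ab$ has rank $2$; expanding $0=g^*(b)^2=\gamma^2a^2+2\gamma\delta\,ab$ and comparing the independent monomials forces $\gamma=0$. Invertibility of $g^*$ over $\zz$ then gives $\det\begin{pmatrix}\alpha&0\\\beta&\delta\end{pmatrix}=\alpha\delta=\pm1$, so $\alpha,\delta\in\{\pm1\}$. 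Finally $0=g^*(a)^{m+1}=(m+1)\alpha^m\beta\,a^mb$ in $H^{\lambda(m+1)}(X;\zz)\cong\zz$ forces $\beta=0$, since $\alpha^m=\pm1$ and $m+1\neq0$. Reducing mod $2$ yields $g^*(a)=a$ and $g^*(b)=b$.

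\textbf{Case (ii): $\lambda m=n+j$, $j\equiv\lambda\pmod{2\lambda}$, $0\leqslant j<n$, $\tfrac{n}{\lambda}$ even.} Put $s=n/\lambda\geqslant2$. The numerical conditions give $s\leqslant m<2s$, so $a^s\neq0$, $a^{2s}=0$, and $H^\lambda(X;\zz)=\zz a$ while $H^n(X;\zz)=\zz a^s\oplus\zz b$. From $H^\lambda=\zz a$ one gets $g^*(a)=\pm a$, hence $g^*(a^s)=\pm a^s$. Writing $g^*(b)=p\,a^s+q\,b$, the relation $0=g^*(b)^2=p^2a^{2s}+2pq\,a^sb=2pq\,a^sb$ forces $pq=0$ because $a^sb\neq0$. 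If $q=0$, then $g^*$ would send the rank-$2$ group $H^n$ into $\zz a^s$, contradicting invertibility; hence $p=0$, and $\det=\pm1$ on $H^n$ gives $q=\pm1$. Reducing mod $2$ yields $g^*(b)=b$ (and $g^*(a)=a$ as before).

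The essential obstacle, and the reason the integral hypothesis is indispensable, is that over $\zz_2$ the determinant condition is vacuous (it only records $\det\equiv1$), so the unipotent possibilities $g^*(a)=a+b$ and $g^*(b)=a^s+b$ cannot be excluded; over $\zz$ the constraint $\det=\pm1$ together with the relations $a^{m+1}=0$ and $b^2=0$ rules them out. The one point requiring care is verifying that the cohomology groups in play genuinely have rank $2$ with the asserted monomial bases, i.e. that $a^2,ab$ (in case (i)) and $a^s,b$ (in case (ii)) are nonzero and independent; this is exactly where the inequalities $m\geqslant2$ and $s\leqslant m<2s$ coming from the definitions of the exceptional cases enter. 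Since $a$ and $b$ generate $H^*(X;\zz_2)$ as a ring, fixing them shows $g^*$ is trivial on all of $H^*(X;\zz_2)$.
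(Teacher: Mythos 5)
Your proof is correct, and it shares the paper's overall skeleton --- lift to integral cohomology, where torsion-freeness makes the automorphism rigid, then transport the conclusion back through the commutative square relating $g_{\zz}^*$ and $g^*$ via mod $2$ reduction --- but the mechanism you use to pin down $g_{\zz}^*$ on the generators is genuinely different. The paper argues uniformly over all cases: it observes that a ring automorphism preserves cup-length, that over $\zz$ a mixed class such as $b+ca^{s}$ ($c\ne 0$) has strictly larger cup-length than $b$ because $2ca^{s}b\ne 0$ in torsion-free cohomology, and hence concludes $g_{\zz}^*(a)=\pm a$, $g_{\zz}^*(b)=\pm b$ at once; the single case this invariant cannot see, $m=1$, $n=\lambda$ (where $X\sim_2 S^\lambda\times S^\lambda$ and the swap of generators is an honest ring automorphism), is excluded by the fixed-point result \Cref{Bredoncor75}. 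You instead first invoke \Cref{lem-acttri} to reduce to its two exceptional cases and then run explicit matrix computations, using $b^2=0$, $a^{m+1}=0$ and $\det=\pm1$ to kill the off-diagonal entries. Note that your appeal to \Cref{lem-acttri} is not merely a convenience but essential: since the swap on $S^\lambda\times S^\lambda$ is a ring automorphism, no purely algebraic argument of your kind could exclude it, and it is precisely the restriction to cases (i) and (ii) (which force $m\geqslant 3$, resp.\ $n\geqslant 2\lambda$) that keeps that configuration out of reach --- the same fixed-point input \Cref{Bredoncor75} is still doing the work, just routed through the proof of \Cref{lem-acttri}. What each approach buys: yours makes completely explicit where integrality enters ($\gamma^{2}=0$, $2pq=0$, determinant $\pm1$ versus the vacuous mod $2$ determinant), and your rank verifications ($a^{2},ab$ independent when $m\geqslant 3$; $a^{s},b$ independent when $s\leqslant m<2s$) are all accurate; the paper's cup-length argument is shorter and case-free but leans on the (true, slightly glossed) additivity of cup-length for sums of generators.
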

\begin{proof}
The integral cohomology generators of $H^*(X;\zz)\cong H^*(\fp^m\times S^n;\zz)$ are also denoted as $a$  and $b$. Let $g_{\zz}^*$ be the induced integral cohomology homomorphism $H^*(X;\zz)\to H^*(X;\zz)$. Then $g_{\zz}^*$ is an automorphism and preserves degrees as well as cup-length, where for a cohomology class $x$, the cup-length of $x$ is the greatest integer $k$ such that $x^k\ne 0$. Note that, the cup-length of a sum of the integral generators is the sum of the cup-lengths of the individual generators.  
For $m=1$, $n=\lambda$, the mod $2$ cohomology of $X$ is the same as of
\[
S^2\times S^2 ~\text{or~} S^4\times S^4.
\]
If $G$ acts nontrivially on $H^*(X;\zz_2)$, by \Cref{Bredoncor75}, we have $X^{G}$ is non-empty, which contradicts the action being free. Therefore, except for the case when $m=1$, $n=\lambda$, we clearly have 
\[
g_{\zz}^*(a)=\pm a, ~g_{\zz}^*(b) =\pm b.
\]

The integral cohomology group of $X\sim_{\zz}\fp^m\times S^n$ is torsion free for any dimension $l$, so 
\[
H^l(X;\zz_2)\cong H^l(X;\zz)\otimes \zz_2.
\]
Considering the mod $2$ reduction $\phi:H^l(X;\zz)\to H^l(X;\zz_2)$, we have the following commutative diagram
\[
\xymatrix{
H^l(X;\zz) \ar[d]_-{\phi}\ar[r]^-{g_{\zz}^*} & H^l(X;\zz)\ar[d]^-{\phi}\\
H^l(X;\zz_2) \ar[r]_-{g^*} & H^l(X;\zz_2)\\
}
\] 
Thus the mod $2$ cohomology homomorphism $g^*$ is the identity homomorphism.
\end{proof}
\begin{remark}\label{rem-m57RPmS4}
For the exceptional cases $m=5, 7$ in \Cref{thm-Z2faRPmS4},  if $X\sim_{\zz}\rp^m\times S^4$, the trivial action of $G$ on $H^*(X;\zz_2)$  is easily seen as follows.

It is known that 
\begin{align*}
H^*(\rp^{2k+1};\zz) & \cong \zz[a^{}_1, a^{}_2]/\langle 2a^{}_1, a_1^{k+1}, a_2^2, a^{}_1a^{}_2\rangle, \deg a^{}_1= 2, \deg a^{}_2 = 2k + 1.
\end{align*}
Let $g_{\zz}^*: H^*(X;\zz)\to H^*(X;\zz)$ be the induced automorphism. The action of the involution on the generator $b\in H^4(X;\zz)$ coming from $H^4(S^4;\zz)$ must reduce mod 2 to the identity action. 
Otherwise we would have that $g_{\zz}^*(b)= \pm b+a^2_1$ and this can not happen as the class $b\pm a^2_1$ does not square to zero in $H^*(X;\zz)$.  \qed
\end{remark}

Lemmas \ref{lem-acttri}, \ref{lem-acttri-Z} and \Cref{rem-m57RPmS4}  are sufficient for the proof of main Theorems \ref{thm-Z2faRPmS4}, \ref{thm-Z2faCPmS4} and \ref{thm-Z2faHPmS4}. The parts of the main
theorems that are affected by the exceptions of \Cref{lem-acttri} occur only in cases of $\rp^5\times S^4$, $\rp^7\times S^4$, $\cp^3\times S^4$ and $\hp^m\times S^4$, $m\equiv 3\pmod 4$. But with an additional assumption about the trivial action of $G$ on $H^*(X;\zz_2)$ or the integral cohomology of $\fp^m\times S^4$ mentioned in \Cref{lem-acttri-Z} and \Cref{rem-m57RPmS4}, it is possible to show the above cases of the main theorems.

\section{Proofs of the main theorems}\label{sec-proofs}

Let $G=\zz_2$ act freely on a finitistic space $X\sim _2 \fp^m \times S^4$, where $\ff=\rr$, $\cc$ or $\hh$. By \Cref{lem-acttri}, \ref{lem-acttri-Z} and \Cref{rem-m57RPmS4}, $\pi _1(B_G)\cong\zz_2$ acts trivially on $H^*(X)$,  hence, the $E_2$-term of the Leray-Serre spectral sequence associated to the fibration $X\hookrightarrow X_G \to B_G$ has the form
\[
E_2^{k,l}=H^k(B_G)\otimes H^l(X).
\] 
Recall that, \[ H^*(B_G)=\mathbb{Z}_2[t], \text{~where~}\deg t=1. \]

\subsection{Proof of \Cref{thm-Z2faRPmS4}}

Let $G=\zz_2$ act freely on $X\sim _2 \rp^m \times S^4$. Using the K$\ddot{\text{u}}$nneth formula, we observe that, 
\begin{align*}
	H^l(X) & =
	\begin{cases}
		\zz_2, & 0\leqslant l\leqslant \min\{3,m\} \text{~or~} \max\{4,m+1\}\leqslant l\leqslant m+4,\\
		(\zz_2)^2, & 4\leqslant l\leqslant m,\\
		0, & \text{otherwise}.
	\end{cases}
\end{align*}
Let $a\in H^1(X)$ and $b\in H^4(X)$ be the generators of the cohomology algebra of $H^*(X)$, satisfying $a^{m+1}=0$ and $b^2=0$. By degree reasons, $t\otimes 1\in E_2^{1,0}$ is a permanent cocycle and survives to a nontrivial element $x\in E_{\infty}^{1,0}$, i.e., by the edge homomorphism,  
\begin{equation}\label{eq-x=pi*t}
x=\pi ^*(t)\in E_\infty ^{1,0}\subset H^1(X_G). 
\end{equation}

Since $\zz_2$ acts freely on $X$, by \Cref{Bredonthm15}, the spectral sequence does not collapse. Otherwise, we get $H^i(X/G)\ne 0$ for infinitely many values of $i>m+4$. It implies that some differential $d_r:E_r^{k,l}\to E_r^{k+r,l-r+1}$ must be nontrivial. Note that $E_2^{*,*}$ is generated by $t\otimes 1\in E_2^{1,0}$, $1\otimes a\in E_2^{0,1}$ and $1\otimes b\in E_2^{0,4}$. There can only be nontrivial differentials $d_r$ on these generators when $2\leqslant r\leqslant 5$. 
It follows immediately that there are five possibilities for nontrivial differentials on generators:

\begin{enumerate}[(i)]\itemindent=2em
\item $d_2(1\otimes a)\ne 0$.
\item $d_2(1\otimes a)=0$, $d_r(1\otimes b)=0$, $r=2,3,4$ and $d_5(1\otimes b)\ne 0$.
\item $d_2(1\otimes a)=0$, $d_r(1\otimes b)=0$, $r=2,3$ and $d_4(1\otimes b)\ne 0$.
\item $d_2(1\otimes a)=0$, $d_2(1\otimes b)=0$ and $d_3(1\otimes b)\ne 0$.
\item $d_2(1\otimes a)=0$ and $d_2(1\otimes b)\ne 0$.
\end{enumerate}
In the following, we discuss each case separately. 

\textbf{Case(\romannumeral 1)} 
$d_2(1\otimes a)=t^2\otimes 1\ne 0$. 

If $m$ is even, then $a^{m+1}=0$ gives $0=d_2((1\otimes a^m)(1\otimes a))=t^2 \otimes a^m$, a contradiction. Hence $m$ must be odd. There are two possible subcases: either $d_2(1\otimes b)=t^2\otimes a^3\ne 0$ or $d_2(1\otimes b)=0$.

If $d_2(1\otimes b)=t^2\otimes a^3\ne 0$ (in this subcase, $m\geqslant 3$),  by the derivation property of the differential, we have
\begin{align*}
	\begin{cases}
		d_2(1\otimes a^j)=j(t^2\otimes a^{j-1}), & 1\leqslant j\leqslant m, \\
		d_2(1\otimes a^jb)=t^2\otimes a^{j+3}+j(t^2\otimes a^{j-1}b), & 0\leqslant j\leqslant m-3, \\
		d_2(1\otimes a^jb)=j(t^2\otimes a^{j-1}b), & m-2\leqslant j\leqslant m.
	\end{cases}
\end{align*}
Note that 
\begin{align*}
	d_2(1\otimes ab) & =
	\begin{cases}
		t^2\otimes b+t^2\otimes a^4, & m\geqslant 5,\\
		t^2\otimes b, & m=3.
	\end{cases}\\
	d_2d_2(1\otimes ab) & =
	\begin{cases}
		d_2(t^2\otimes b+t^2\otimes a^4), & m\geqslant 5,\\
		d_2(t^2\otimes b), & m=3.
	\end{cases}\\
	& =t^4\otimes a^3\ne 0
\end{align*} 
This contradicts $d_2d_2=0$, thus $d_2(1\otimes b)=0$. By the derivation property of the differential, we have
\begin{align*}
\begin{cases}
d_2(1\otimes a^j)=j(t^2\otimes a^{j-1}), & 1\leqslant j\leqslant m, \\
d_2(1\otimes a^jb)=j(t^2\otimes a^{j-1}b), & 0\leqslant j\leqslant m.
\end{cases}
\end{align*}

The $E_2$-term and $d_2$-differentials look like \Cref{case1-d2}. In all Figures of this paper, we write $t^ka^l, t^ka^{l-4}b$ for $t^k\otimes a^l, t^k\otimes a^{l-4}b\in E_2^{k,l}$ respectively. Each black dot represents a $\zz_2$ summand and the two types of lines (colored by {\red red}, {\color{cyan} cyan}) represent multiplication by $a$ and $b$, and the arrowed line (colored by {\blue blue}) represents a nontrivial differential. In columns $k-2$ and $k$, if there is no arrowed line starting from a black dot, then $d_2$ vanishes on this class.

\begin{figure}[h]
\centering
\DeclareSseqGroup\tower{}{\class(0,0)\foreach \y in {1,...,6}{\class(0,\y) \structline }}
\DeclareSseqGroup\towerfive{}{\class(0,0)\foreach \y in {1,...,5}{\class(0,\y) \structline }}
\DeclareSseqGroup\towerfour{}{\class(0,0)\foreach \y in {1,...,4}{\class(0,\y) \structline }}
\DeclareSseqGroup\towerthree{}{\class(0,0)\foreach \y in {1,...,3}{\class(0,\y) \structline }}
\begin{sseqpage}[classes = fill, class labels = { left = 0.2em }, x tick step = 2, xscale = 1.3, yscale = 0.8, no ticks,class placement transform = {scale = 2}]
	\class[white](0,0)\class[white](7,0)
	\towerfive[struct lines = magenta, classes = black](2,0) \class[black](2,6) \towerthree[struct lines = white, classes = white](2,7)
	\draw[dashed, magenta](1.74,5)--(1.74,6);
	\draw[magenta](2.26,4)--(2.26,5);\draw[dashed, magenta](2.26,5)--(2.26,6);
	\draw[cyan](2,0)--(2.26,4);\draw[cyan](2,1)--(2.26,5);\draw[cyan](2,6)--(2.26,10);
	\foreach \foreach \y in {2,3,4,5}\draw[cyan](2,\y)--(1.87,1+\y);
	\towerthree[struct lines = white, classes = white](2,0) \class[black](2,4) \class[black](2,5)\towerfour[struct lines = magenta, classes = black](2,6)
	
	\towerfive[struct lines = magenta, classes = black](4,0) \class[black](4,6) \towerthree[struct lines = white, classes = white](4,7)
	\draw[dashed, magenta](3.74,5)--(3.74,6);
	\draw[magenta](4.26,4)--(4.26,5);\draw[dashed, magenta](4.26,5)--(4.26,6);
	\draw[cyan](4,0)--(4.26,4);\draw[cyan](4,1)--(4.26,5);\draw[cyan](4,6)--(4.26,10);
	\foreach \foreach \y in {2,3,4,5}\draw[cyan](4,\y)--(3.87,1+\y);
	\towerthree[struct lines = white, classes = white](4,0) \class[black](4,4) \class[black](4,5)\towerfour[struct lines = magenta, classes = black](4,6)
	
	\towerfive[struct lines = magenta, classes = black](6,0) \class[black](6,6) \towerthree[struct lines = white, classes = white](6,7)
	\draw[dashed, magenta](5.74,5)--(5.74,6);
	\draw[magenta](6.26,4)--(6.26,5);\draw[dashed, magenta](6.26,5)--(6.26,6);
	\draw[cyan](6,0)--(6.26,4);\draw[cyan](6,1)--(6.26,5);\draw[cyan](6,6)--(6.26,10);
	\foreach \foreach \y in {2,3,4,5}\draw[cyan](6,\y)--(5.87,1+\y);
	\towerthree[struct lines = white, classes = white](6,0) \class[black](6,4) \class[black](6,5)\towerfour[struct lines = magenta, classes = black](6,6)
	
	\node[background] at (0,-1) {\cdots};
	\node[background] at (2,-1) {k-2\geqslant 0}; \node[background] at (4,-1) {k}; \node[background] at (6,-1) {k+2};
	\foreach \y in {0,1,2,3,4,5}\node[background] at (-1,\y) {\y};
	\node[background] at (-1,5.6) {\vdots};\node[background] at (-1,6) {m};
	\foreach \y in {1,2,3,4}\node[background] at (-1.2,6+\y) {m+\y};
	
	\draw[blue,->](2,1)--(4,0);\draw[blue,->](2,3)--(4,2);\draw[blue,->](2,5)--(4,4);\draw[blue,->](2,6)--(3.0,5.37);
	\draw[blue,->](2,5,2)--(4,4,2);\draw[blue,->](2,6,2)--(3.52,5.37);\draw[blue,->](2,8,2)--(4,7,2);\draw[blue,->](2,10,2)--(4,9,2);
	
	\draw[blue,->](4,1)--(6,0);\draw[blue,->](4,3)--(6,2);\draw[blue,->](4,5)--(6,4);\draw[blue,->](4,6)--(5.0,5.37);
	\draw[blue,->](4,5,2)--(6,4,2);\draw[blue,->](4,6,2)--(5.52,5.37);\draw[blue,->](4,8,2)--(6,7,2);\draw[blue,->](4,10,2)--(6,9,2);
	\node[blue] at (3.2,5.1) {\vdots};\node[blue] at (5.2,5.1) {\vdots};
	
    \foreach \foreach \y in {0,1,2,3,4,5,6,7,8,9,10}
	\draw[lightgray!50, thin](0,\y)--(8,\y);
	
	\node[background] at (1.28,0.05) {t^{k-2}};\node[background] at (1.20,1.05) {t^{k-2}a};\node[background] at (1.10,6.05) {t^{k-2}a^m};
	\node[background] at (3.43,0.05) {t^{k}};\node[background] at (3.35,1.05) {t^{k}a};\node[background] at (3.25,6.05) {t^{k}a^m};
	\node[background] at (5.28,0.05) {t^{k+2}};\node[background] at (5.20,1.05) {t^{k+2}a};\node[background] at (5.10,6.05) {t^{k+2}a^m};

	\node[background] at (2.8,4.05) {t^{k-2}b};\node[background] at (3.0,10.05) {t^{k-2}a^mb};
	\node[background] at (4.65,4.05) {t^{k}b};\node[background] at (4.7,5.05) {t^{k}ab};\node[background] at (4.85,10.05) {t^{k}a^mb};
	\node[background] at (6.8,4.05) {t^{k+2}b};\node[background] at (7.0,10.05) {t^{k+2}a^mb};
\end{sseqpage}
\caption{$E_2$-term and $d_2$-differentials in \textbf{Case(\romannumeral 1)}}\label{case1-d2}
\end{figure}

Since 
\[
E_3^{k,l}=\dfrac{\ker\{d_2:E_2^{k,l}\to E_2^{k+2,l-1}\}}{\text{im}\{d_2:E_2^{k-2,l+1}\to E_2^{k,l}\}},
\]
it is clear from the \Cref{case1-d2} above that 
\begin{align*}
	E_3^{k,l} & =
	\begin{cases}
		\zz_2 , & k=0,1; l=0,2,m+1,m+3,\\
		(\zz_2)^2, & k=0,1; 4\leqslant l\leqslant m \text{~and~} l \text{~even},\\
		0 , & \text{otherwise}.
	\end{cases}
\end{align*}  
Note that $d_r:E_r^{k,l}\to E_r^{k+r,l-r+1}$ is zero for all $r\geqslant 3$ as $E_r^{k+r,l-r+1}=0$, so 
\[
E_3^{*,*}=E_\infty ^{*,*}. 
\]

Since $H^*(X_G)\cong \text{Tot}E_\infty ^{*,*}$, the additive structure of $H^*(X_G)$ is given by
\begin{align*}
	H^j(X_G) & =
	\begin{cases}
		\zz_2 , & 0\leqslant j\leqslant 3\text{~or~}m+1\leqslant j\leqslant m+4,\\
		(\zz_2)^2, & 4\leqslant j\leqslant m,\\
		0 , & j>m+4.
	\end{cases}
\end{align*} 

As $E_\infty ^{2,0}=0$, by \eqref{eq-x=pi*t}, we have $x^2=0$. Notice that, the elements $1\otimes a^2\in E_2^{0,2}$ and $1\otimes b\in E_2^{0,4}$ are permanent cocycles and are not hit by any $d_r$-coboundaries. Hence, they determine nontrivial elements $u\in E_\infty ^{0,2}$ and $v\in E_\infty ^{0,4}$, respectively. We have $u^{\frac{m+1}{2} }=0$ as $a^{m+1}=0$, and $v^2=0$ as $b^2=0$. Thus
\[
\text{Tot}E_\infty ^{*,*}\cong \zz_2[x,u,v]/\langle x^2,u^{\frac{m+1}{2}},v^2 \rangle,
\] 
where $\deg x=1$, $\deg u=2$, $\deg v=4$.

By the identification of the edge homomorphism there exist $y\in H^2(X_G)$ and $z\in H^4(X_G)$ such that $i^*(y)=a^2$ and $i^*(z)=b$, respectively. Notice that $H^j(X_G)=E_{\infty}^{k,j-k}$, where $k=0,1$ and $j-k$ even. Consequently, $y^{\frac{m+1}{2}}\in H^{m+1}(X_G)=E_{\infty}^{0,m+1}$ is represented by $a^{m+1}\in E_{2}^{0,m+1}$ and $z^2\in H^8(X_G)=E_{\infty}^{0,8}$ is represented by $b^2\in E_{2}^{0,8}$. So we have the following relations:
\[
y^{\frac{m+1}{2}}=0,~z^2=0.
\]
Therefore, $H^*(X_G)$  is the graded commutative algebra
\[
\zz_2[x,y,z]/\langle x^2,y^{\frac{m+1}{2}},z^2\rangle,
\] 
where $\deg y=2$, $\deg z=4$ and $m$ is odd. This gives possibility (1) of \Cref{thm-Z2faRPmS4}.

\textbf{Case(\romannumeral 2)}
$d_2(1\otimes a)=0$, $d_r(1\otimes b)=0$, $2\leqslant r\leqslant 4$ and $d_5(1\otimes b)=t^5\otimes 1\ne 0$.

In this case we have $d_r=0$, $2\leqslant r\leqslant 4$, $E_2^{*,*}=E_5^{*,*}$, and
\begin{align*}
	\begin{cases}
		d_5(1\otimes a^j)=0, & 1\leqslant j\leqslant m, \\
		d_5(1\otimes a^jb)=t^5\otimes a^j, & 0\leqslant j\leqslant m.
	\end{cases}
\end{align*}
Furthermore, we have
\[
\xymatrix@R=1mm{
 E_5^{k-5,l+4}\ar[r]^-{d_5} & E_5^{k,l}\ar[r]^-{d_5} & E_5^{k+5,l-4},\\
t^{k-5}\otimes a^lb\ar@{|->}[r]^-{d_5} & t^k\otimes a^l\ar@{|->}[r]^-{d_5} & 0,  \\
t^{k-5}\otimes a^{l+4}\ar@{|->}[r]^-{d_5} & 0, ~t^k\otimes a^{l-4}b\ar@{|->}[r]^-{d_5} & t^{k+5}\otimes a^{l-4}.}
\]
So
\begin{align*}
	E_6^{k,l} & =
	\begin{cases}
		\zz_2 , & 0\leqslant k\leqslant 4;0\leqslant l\leqslant m,\\
		0 , & \text{otherwise}.
	\end{cases}
\end{align*} 
Note that
\(
d_r:E_r^{k,l}\to E_r^{k+r,l-r+1}
\)
is zero for all $r\geqslant 6$ as $E_r^{k+r,l-r+1}=0$, so 
\[
E_6^{*,*}=E_\infty ^{*,*}. 
\]
The additive structure of $H^*(X_G)$ is given by 
\begin{align}
	H^j(X_G) & =
	\begin{cases}
		\zz_2 , & j=0,m+4,\\
		(\zz_2)^2, & j=1,m+3,\\
		(\zz_2)^3, & j=2,m+2,\\
		(\zz_2)^4, & j=3,m+1,\\
		(\zz_2)^5, & 4\leqslant j\leqslant m ~(\text{for~}m\geqslant 4),\\
		0 , & \text{otherwise}.\label{eq-HjXG}
	\end{cases}
\end{align} 

Notice that, the element $1\otimes a\in E_2^{0,1}$ is a permanent cocycle and is not a $d_r$-coboundary. Hence, it determines a nontrivial element $u\in E_\infty ^{0,1}$. As we have remarked, $a^{m+1}=0$, so 
\begin{equation}\label{eq-um+1=0}
	 u^{m+1}=0.
\end{equation}
As $E_\infty ^{5,0}=0$,  by \eqref{eq-x=pi*t}, we have $x^5=0$. Thus
\[
\text{Tot}E_\infty ^{*,*}\cong \zz_2[x,u]/\langle x^5,u^{m+1}\rangle,
\] 
where $\deg u=1$. 

Further, choose $y\in H^1(X_G)$ such that $i^*(y)=a$. By considering the filtration on $H^{m+1}(X_G)$, 
\begin{equation}\label{eq-fil2}
0=F_{m+1}^{m+1}=\cdots =F_{5}^{m+1}\subset F_{4}^{m+1}\subset F_{3}^{m+1} \subset F_{2}^{m+1}\subset F_{1}^{m+1}= F_{0}^{m+1}=H^{m+1}(X_G),
\end{equation}
\vskip -8mm

\hskip 4.1cm$\underbrace{\hskip 11mm}_{E_{\infty}^{4,m-3}}$~\hskip 3mm$\underbrace{\hskip 11mm}_{E_{\infty}^{3,m-2}}$~\hskip 3mm$\underbrace{\hskip 11mm}_{E_{\infty}^{2,m-1}}$~\hskip 3mm$\underbrace{\hskip 11mm}_{E_{\infty}^{1,m}}$~\hskip 3mm

\noindent we get the following relation: 
\[
y^{m+1}=\alpha_1xy^m+\alpha_2x^2y^{m-1}+\alpha _3x^3y^{m-2}+\alpha _4x^4y^{m-3},
\] 
where $\alpha _i\in \zz_2$, $ i=1,\dots ,4$. Therefore, 
\[
H^*(X_G)=\zz_2[x,y]/\langle x^5,y^{m+1}+\alpha_1xy^m+\alpha_2x^2y^{m-1}+\alpha _3x^3y^{m-2}+\alpha _4x^4y^{m-3}\rangle,
\] 
where $\deg y=1$. If $m=1$, then $\alpha_3=\alpha_4=0$. If $m=2$, then $\alpha_4=0$. This gives possibility (2) of \Cref{thm-Z2faRPmS4}.

{\bf In all remaining cases \textbf{Cases(\romannumeral 3)} $\sim$ \textbf{(\romannumeral 5)} there will be classes $u\in E_\infty ^{0,1}$, $y\in H^1(X_G)$ defined as above and the relation \eqref{eq-um+1=0} will be satisfied}. 

\textbf{Case(\romannumeral 3)}
$d_2(1\otimes a)=0$, $d_r(1\otimes b)=0$, $r=2,3$ and $d_4(1\otimes b)=t^4\otimes a\ne 0$.

This case implies that $d_r=0$, $r=2,3$, $E_4^{*,*}=E_2^{*,*}$. So we have
\begin{align*}
	\begin{cases}
		d_4(1\otimes a^j)=0, & 1\leqslant j\leqslant m, \\
		d_4(1\otimes a^jb)=t^4\otimes a^{j+1}, & 0\leqslant j\leqslant m-1, \\
		d_4(1\otimes a^mb)=0.
	\end{cases}
\end{align*}

The $E_4$-term and $d_4$-differentials look like \Cref{case3-d4}. 
\begin{figure}[h]
\centering
\DeclareSseqGroup\tower{}{\class(0,0)\foreach \y in {1,...,6}{\class(0,\y) \structline }}
\DeclareSseqGroup\towerfive{}{\class(0,0)\foreach \y in {1,...,5}{\class(0,\y) \structline }}
\DeclareSseqGroup\towerfour{}{\class(0,0)\foreach \y in {1,...,4}{\class(0,\y) \structline }}
\DeclareSseqGroup\towerthree{}{\class(0,0)\foreach \y in {1,...,3}{\class(0,\y) \structline }}
\begin{sseqpage}[classes = fill, class labels = { left = 0.2em }, x tick step = 2, xscale = 1.3, yscale = 0.9, no ticks,class placement transform = {scale = 2}]
		\class[white](0,0)\class[white](7,0)
	
	\tower[struct lines = magenta, classes = black](2,0)\class[black](2,7)\class[black](2,8)
	\towerthree[struct lines = white, classes = white](2,9)
	\draw[magenta](1.74,6)--(1.74,7);\draw[dashed, magenta](1.74,7)--(1.74,8);
	\draw[magenta](2.26,4)--(2.26,5);\draw[magenta](2.26,5)--(2.26,6);\draw[magenta](2.26,6)--(2.26,7);\draw[dashed, magenta](2.26,7)--(2.26,8);
	\draw[cyan](2,0)--(2.26,4);\draw[cyan](2,1)--(2.26,5);\draw[cyan](2,8)--(2.26,12);
	\foreach \foreach \y in {2,3,4,5,6,7}\draw[cyan](2,\y)--(1.87,1+\y);
	\towerthree[struct lines = white, classes = white](2,0) \class[black](2,4) \class[black](2,5) \class[black](2,6) \class[black](2,7)\towerfour[struct lines = magenta, classes = black](2,8)
	
	\tower[struct lines = magenta, classes = black](4,0)\class[black](4,7)\class[black](4,8)
	\towerthree[struct lines = white, classes = white](4,9)
	\draw[magenta](3.74,6)--(3.74,7);\draw[dashed, magenta](3.74,7)--(3.74,8);
	\draw[magenta](4.26,4)--(4.26,5);\draw[magenta](4.26,5)--(4.26,6);\draw[magenta](4.26,6)--(4.26,7);\draw[dashed, magenta](4.26,7)--(4.26,8);
	\draw[cyan](4,0)--(4.26,4);\draw[cyan](4,1)--(4.26,5);\draw[cyan](4,8)--(4.26,12);
	\foreach \foreach \y in {2,3,4,5,6,7}\draw[cyan](4,\y)--(3.87,1+\y);
	\towerthree[struct lines = white, classes = white](4,0) \class[black](4,4) \class[black](4,5) \class[black](4,6) \class[black](4,7)\towerfour[struct lines = magenta, classes = black](4,8)
	
	\tower[struct lines = magenta, classes = black](6,0)\class[black](6,7)\class[black](6,8)
	\towerthree[struct lines = white, classes = white](6,9)
	\draw[magenta](5.74,6)--(5.74,7);\draw[dashed, magenta](5.74,7)--(5.74,8);
	\draw[magenta](6.26,4)--(6.26,5);\draw[magenta](6.26,5)--(6.26,6);\draw[magenta](6.26,6)--(6.26,7);\draw[dashed, magenta](6.26,7)--(6.26,8);
	\draw[cyan](6,0)--(6.26,4);\draw[cyan](6,1)--(6.26,5);\draw[cyan](6,8)--(6.26,12);
	\foreach \foreach \y in {2,3,4,5,6,7}\draw[cyan](6,\y)--(5.87,1+\y);
	\towerthree[struct lines = white, classes = white](6,0) \class[black](6,4) \class[black](6,5) \class[black](6,6) \class[black](6,7)\towerfour[struct lines = magenta, classes = black](6,8)
	
	\node[background] at (0,-1) {\cdots};
	\node[background] at (2,-1) {k-4\geqslant 0}; \node[background] at (4,-1) {k};\node[background] at (6,-1) {k+4};
	\foreach \y in {0,1,2,3,4,5,6,7}\node[background] at (-1,\y) {\y};
	\node[background] at (-1,7.6) {\vdots};\node[background] at (-1,8) {m};
	\foreach \y in {1,2,3,4}\node[background] at (-1.2,8+\y) {m+\y};
	
	\node[background] at (1.28,0.05) {t^{k-4}};\node[background] at (1.20,1.05) {t^{k-4}a};\node[background] at (1.10,8.05) {t^{k-4}a^m};
	\node[background] at (3.43,0.05) {t^{k}};\node[background] at (3.35,1.05) {t^{k}a};\node[background] at (3.30,2.05) {t^{k}a^2};\node[background] at (3.25,8.05) {t^{k}a^m};
	\node[background] at (5.28,0.05) {t^{k+4}};\node[background] at (5.20,1.05) {t^{k+4}a};\node[background] at (5.10,8.05) {t^{k+4}a^m};
	
	\node[background] at (2.8,4.05) {t^{k-4}b};\node[background] at (3.0,12.05) {t^{k-4}a^mb};
	\node[background] at (4.65,4.05) {t^{k}b};\node[background] at (4.75,5.05) {t^{k}ab};\node[background] at (4.85,12.05) {t^{k}a^mb};
	\node[background] at (6.8,4.05) {t^{k+4}b};\node[background] at (7.0,12.05) {t^{k+4}a^mb};
	
	\draw[blue,->](2,4,2)--(4,1);
	\draw[blue,->](2,5,2)--(4,2);\draw[blue,->](2,6,2)--(4,3);\draw[blue,->](2,7,2)--(4,4);
	\draw[blue,->](2,8,2)--(3.26,5.97);\draw[blue,->](2,9,2)--(3.26,6.97);\draw[blue,->](2,10,2)--(3.26,7.97);
	\draw[blue,->](2,11,2)--(4,8,1);
	
	\draw[blue,->](4,4,2)--(6,1);
	\draw[blue,->](4,5,2)--(6,2);\draw[blue,->](4,6,2)--(6,3);\draw[blue,->](4,7,2)--(6,4);
	\draw[blue,->](4,8,2)--(5.26,5.97);\draw[blue,->](4,9,2)--(5.26,6.97);\draw[blue,->](4,10,2)--(5.26,7.97);
	\draw[blue,->](4,11,2)--(6,8,1);
	\node[blue] at (3.0,6.2) {\vdots};\node[blue] at (5.0,6.2) {\vdots};
	
	\foreach \foreach \y in {0,1,2,3,4,5,6,7,8,9,10,11,12}
	\draw[lightgray!50, thin](0,\y)--(8,\y);
\end{sseqpage}
\caption{$E_4$-term and $d_4$-differentials in \textbf{Case(\romannumeral 3)}}\label{case3-d4}
\end{figure}
Then
\begin{align*}
	E_5^{k,l} & =
	\begin{cases}
		\zz_2 , & k\geqslant 4;l=0,m+4,\\
		\zz_2 , & 0\leqslant k\leqslant 3;0\leqslant l\leqslant m, l=m+4,\\
		0 , & \text{otherwise}.
	\end{cases}
\end{align*} 
Since $1\otimes a$ is a permanent cocycle, by the derivation property of the differential, $d_5(1\otimes a^j)=0$, $1\leqslant j\leqslant m$, and all $d_5: E_5^{k,l}\to E_5^{k+5,l-4}$ is zero by degree reasons. Similarly, 
\(
d_r:E_r^{k,l}\to E_r^{k+r,l-r+1}
\)
is zero for all $6\leqslant r\leqslant m+4$. Thus
\[
E_{m+5}^{*,*}=E_{5}^{*,*}.
\]
Now if $d_{m+5}:E_{m+5}^{0,m+4}\to E_{m+5}^{m+5,0}$ is trivial, then by the multiplicative properties of the spectral sequence, we have $E_{m+5}^{*,*}=E_\infty ^{*,*}$. Therefore the bottom line ($l=0$) and the top line ($l=m+4$) of the spectral sequence survive to $E_\infty $, which reduces to $H^i(X/G)\ne 0$ for all {$i>m+4$}. That contradicts \Cref{Bredonthm15}. Thus, $d_{m+5}:E_{m+5}^{0,m+4}\to E_{m+5}^{m+5,0}$ must be nontrivial. 
It follows immediately that $d_{m+5}:E_{m+5}^{k,m+4}\to E_{m+5}^{k+m+5,0}$ is an isomorphism for all $k$. So
\begin{align*}
	E_{m+6}^{k,l} & =
	\begin{cases}
		\zz_2 , & 4\leqslant k\leqslant m+4;l=0,\\
		\zz_2 , & 0\leqslant k\leqslant 3;0\leqslant l\leqslant m,\\
		0 , & \text{otherwise}.
	\end{cases}
\end{align*}
Note that
\(
d_r:E_r^{k,l}\to E_r^{k+r,l-r+1}
\)
is zero for all $r\geqslant m+6$ as $E_r^{k+r,l-r+1}=0$, so 
\[
E_{\infty}^{*,*}=E_{m+6}^{*,*}. 
\]
It follows that the cohomology groups $H^j(X_G)$ are the same as \eqref{eq-HjXG}.

As $E_\infty ^{m+5,0}=0$,  by \eqref{eq-x=pi*t}, we have $x^{m+5}=0$. Clearly, $x^4u=0$. Combining with \eqref{eq-um+1=0}, then
\[
\text{Tot}E_\infty ^{*,*}\cong \zz_2[x,u]/\langle x^{m+5},u^{m+1},x^4u\rangle.
\] 

Now, choose $y'\in H^1(X_G)$ such that $i^*(y')=a$. By considering the filtration on $H^{m+1}(X_G)$, 
\begin{equation}\label{eq-fil3}
0\subset F_{m+1}^{m+1}=\cdots = F_{4}^{m+1}\subset F_{3}^{m+1} \subset F_{2}^{m+1}\subset F_{1}^{m+1}= F_{0}^{m+1}=H^{m+1}(X_G),
\end{equation}
\vskip -5mm

\hskip 5mm$\underbrace{\hskip 11mm}_{E_{\infty}^{m+1,0}}$~\hskip 26mm$\underbrace{\hskip 11mm}_{E_{\infty}^{3,m-2}}$~\hskip 3mm$\underbrace{\hskip 11mm}_{E_{\infty}^{2,m-1}}$~\hskip 3mm$\underbrace{\hskip 11mm}_{E_{\infty}^{1,m}}$~\hskip 3mm

\noindent we get the following relation: 
\[
(y')^{m+1}=\alpha^{\prime}_1x(y')^m+\alpha^{\prime}_2x^2(y')^{m-1}+\alpha^{\prime}_3x^3(y')^{m-2}+\alpha^{\prime}_4x^{m+1},
\] 
where $\alpha^{\prime}_i\in \zz_2$, $ i=1,\dots ,4$. By considering the filtration on $H^{5}(X_G)$, 
\begin{equation*}
	0\subset F_{5}^{5}=F_{4}^{5}\subset  F_{3}^{5} \subset F_{2}^{5}\subset F_{1}^{5}\subset F_{0}^{5}=H^{5}(X_G),
\end{equation*}
\vskip -5mm

\hskip 32mm$\underbrace{\hskip 7mm}_{E_{\infty}^{5,0}}$~
\hskip 10mm$\underbrace{\hskip 9mm}_{E_{\infty}^{3,2}}$~
\hskip 1mm$\underbrace{\hskip 9mm}_{E_{\infty}^{2,3}}$~
\hskip 1mm$\underbrace{\hskip 9mm}_{E_{\infty}^{1,4}}$~
\hskip 1mm $\underbrace{\hskip 9mm}_{E_{\infty}^{0,5}}$

\noindent we can write $x^4y'$ as: 
\[
x^4y'=\beta x^5, \beta \in \zz_2.
\] 
By choosing a particular 
\begin{equation}\label{eq-yy'}
	y=y'+\beta x,
\end{equation}
the above relations can be simplified as
\begin{align*}
	y^{m+1} & =\alpha_1xy^m+\alpha_2x^2y^{m-1}+\alpha _3x^3y^{m-2}+\alpha _4x^{m+1},\\
	x^4y & =0,
\end{align*}
where $\alpha_i~(i=1,\dots ,4)\in \zz_2$. Thus, $H^*(X_G)$ is the graded commutative algebra $\zz_2[x,y]/I$, where $I$ is the ideal given by
\[
I=\langle x^{m+5},y^{m+1}+\alpha_1xy^m+\alpha_2x^2y^{m-1}+\alpha _3x^3y^{m-2}+\alpha _4x^{m+1},x^4y\rangle,
\]
If $m=1$, then $\alpha_3=\alpha_4=0$. If $m=2$, then $\alpha_4=0$. This gives possibility (3) of \Cref{thm-Z2faRPmS4}.

\textbf{Case(\romannumeral 4)}
$d_2(1\otimes a)=0$, $d_2(1\otimes b)=0$ and $d_3(1\otimes b)=t^3\otimes a^2\ne 0$.

Obviously, $m\geqslant 2$, $d_2=0$ and $E_{3}^{*,*}=E_2^{*,*}$. Since
\begin{align*}
	\begin{cases}
		d_3(1\otimes a^j)=0, & 1\leqslant j\leqslant m, \\
		d_3(1\otimes a^jb)=t^3\otimes a^{j+2}, & 0\leqslant j\leqslant m-2, \\
		d_3(1\otimes a^jb)=0, & j=m-1,m,
	\end{cases}
\end{align*}
the $E_3$-term and $d_3$-differentials look like \Cref{case4-d3}. 
\begin{figure}[h]
\centering
\DeclareSseqGroup\tower{}{\class(0,0)\foreach \y in {1,...,6}{\class(0,\y) \structline }}
\DeclareSseqGroup\towerfive{}{\class(0,0)\foreach \y in {1,...,5}{\class(0,\y) \structline }}
\DeclareSseqGroup\towerfour{}{\class(0,0)\foreach \y in {1,...,4}{\class(0,\y) \structline }}
\DeclareSseqGroup\towerthree{}{\class(0,0)\foreach \y in {1,...,3}{\class(0,\y) \structline }}
\begin{sseqpage}[classes = fill, class labels = { left = 0.2em }, x tick step = 2, xscale = 1.3, yscale = 0.8, no ticks,class placement transform = {scale = 2}]
	\class[white](0,0)\class[white](7,0)
	
	\tower[struct lines = magenta, classes = black](2,0)\class[black](2,7)\towerthree[struct lines = white, classes = white](2,8)
	\draw[dashed, magenta](1.74,6)--(1.74,7);
	\draw[magenta](2.26,4)--(2.26,5);\draw[magenta](2.26,5)--(2.26,6);\draw[dashed, magenta](2.26,6)--(2.26,7);
	\draw[cyan](2,0)--(2.26,4);\draw[cyan](2,1)--(2.26,5);\draw[cyan](2,7)--(2.26,11);
	\foreach \foreach \y in {2,3,4,5,6}\draw[cyan](2,\y)--(1.87,1+\y);
	\towerthree[struct lines = white, classes = white](2,0)\class[black](2,4) \class[black](2,5)\class[black](2,6)\towerfour[struct lines = magenta, classes = black](2,7)
	
	\tower[struct lines = magenta, classes = black](4,0)\class[black](4,7)\towerthree[struct lines = white, classes = white](4,8)
	\draw[dashed, magenta](3.74,6)--(3.74,7);
	\draw[magenta](4.26,4)--(4.26,5);\draw[magenta](4.26,5)--(4.26,6);\draw[dashed, magenta](4.26,6)--(4.26,7);
	\draw[cyan](4,0)--(4.26,4);\draw[cyan](4,1)--(4.26,5);\draw[cyan](4,7)--(4.26,11);
	\foreach \foreach \y in {2,3,4,5,6}\draw[cyan](4,\y)--(3.87,1+\y);
	\towerthree[struct lines = white, classes = white](4,0)\class[black](4,4) \class[black](4,5)\class[black](4,6)\towerfour[struct lines = magenta, classes = black](4,7)
	
	\tower[struct lines = magenta, classes = black](6,0)\class[black](6,7)\towerthree[struct lines = white, classes = white](6,8)
	\draw[dashed, magenta](5.74,6)--(5.74,7);
	\draw[magenta](6.26,4)--(6.26,5);\draw[magenta](6.26,5)--(6.26,6);\draw[dashed, magenta](6.26,6)--(6.26,7);
	\draw[cyan](6,0)--(6.26,4);\draw[cyan](6,1)--(6.26,5);\draw[cyan](6,7)--(6.26,11);
	\foreach \foreach \y in {2,3,4,5,6}\draw[cyan](6,\y)--(5.87,1+\y);
	\towerthree[struct lines = white, classes = white](6,0)\class[black](6,4) \class[black](6,5)\class[black](6,6)\towerfour[struct lines = magenta, classes = black](6,7)
	
	\node[background] at (0,-1) {\cdots};
	\node[background] at (2,-1) {k-3\geqslant 0}; \node[background] at (4,-1) {k};\node[background] at (6,-1) {k+3};
	\foreach \y in {0,1,2,3,4,5,6}\node[background] at (-1,\y) {\y};
	\node[background] at (-1,6.6) {\vdots};\node[background] at (-1,7) {m};
	\foreach \y in {1,2,3,4}\node[background] at (-1.2,7+\y) {m+\y};
	
	\node[background] at (1.28,0.05) {t^{k-3}};\node[background] at (1.20,1.05) {t^{k-3}a};\node[background] at (1.10,7.05) {t^{k-3}a^m};
	\node[background] at (3.43,0.05) {t^{k}};\node[background] at (3.35,1.05) {t^{k}a};\node[background] at (3.30,2.05) {t^{k}a^2};\node[background] at (3.25,7.05) {t^{k}a^m};
	\node[background] at (5.28,0.05) {t^{k+3}};\node[background] at (5.20,1.05) {t^{k+3}a};\node[background] at (5.10,7.05) {t^{k+3}a^m};
	
	\node[background] at (2.8,4.05) {t^{k-3}b};\node[background] at (3.0,11.05) {t^{k-3}a^mb};
	\node[background] at (4.65,4.05) {t^{k}b};\node[background] at (4.75,5.05) {t^{k}ab};\node[background] at (4.85,11.05) {t^{k}a^mb};
	\node[background] at (6.8,4.05) {t^{k+3}b};\node[background] at (7.0,11.05) {t^{k+3}a^mb};
	
	\draw[blue,->](2,4,2)--(4,2);\draw[blue,->](2,5,2)--(4,3);\draw[blue,->](2,6,2)--(4,4);
	\draw[blue,->](2,7,2)--(3.26,5.65);\draw[blue,->](2,8,2)--(3.26,6.65);
	\draw[blue,->](2,9,2)--(4,7,1);
	
	\draw[blue,->](4,4,2)--(6,2);\draw[blue,->](4,5,2)--(6,3);\draw[blue,->](4,6,2)--(6,4);
	\draw[blue,->](4,7,2)--(5.26,5.65);\draw[blue,->](4,8,2)--(5.26,6.65);
	\draw[blue,->](4,9,2)--(6,7,1);
	\node[blue] at (3.0,5.7) {\vdots};\node[blue] at (5.0,5.7) {\vdots};
	
	\foreach \foreach \y in {0,1,2,3,4,5,6,7,8,9,10,11}
	\draw[lightgray!50, thin](0,\y)--(8,\y);
\end{sseqpage}
\caption{$E_3$-term and $d_3$-differentials in \textbf{Case(\romannumeral 4)}}\label{case4-d3}
\end{figure}
Then
\begin{equation}\label{eq-case4-E4}
	E_4^{k,l} =
	\begin{cases}
		\zz_2 , & k\geqslant 3;l=0,1,m+3,m+4,\\
		\zz_2 , & 0\leqslant k\leqslant 2;0\leqslant l\leqslant m, l=m+3,m+4,\\
		0 , & \text{otherwise}.
	\end{cases}
\end{equation} 
Consider the bidegrees of $E_4^{*,*}$, \(d_r:E_r^{k,l}\to E_r^{k+r,l-r+1}\)
is zero for all $4\leqslant r\leqslant m+2$. So 
\begin{equation}\label{eq-case4-Em+3}
E_{m+3}^{k,l}=E_{4}^{k,l}, \text{ for all } k, l.
\end{equation}
The differential $d_r:E_r^{0,m+3}\to E_r^{r,m+4-r}$($r\geqslant m+3$) can only be nontrivial when $r=m+3$ or $m+4$. If $d_r:E_r^{0,m+3}\to E_r^{r,m+4-r}$ is trivial for $r=m+3$ and $r=m+4$, then $d_r=0: E_r^{k,m+3}\to E_r^{k+r,m+4-r}$ for any $k$, $r=m+3$ and $r=m+4$. Thus $E_{m+3}^{*,*}=E_\infty ^{*,*}$, at least two lines of the spectral sequence survive to $E_\infty $, which contradicts \Cref{Bredonthm15}. Thus, we get two possibilities:  
\begin{enumerate}[(\romannumeral 4.1)]\itemindent=2em
	\item $d_{m+3}: E_{m+3}^{0,m+3}\to E_{m+3}^{m+3,1}$ is nontrivial.
	\item $d_{m+3}: E_{m+3}^{0,m+3}\to E_{m+3}^{m+3,1}$ is trivial and $d_{m+4}:E_{m+4}^{0,m+3}\to E_{m+4}^{m+4,0}$ is nontrivial.
\end{enumerate}

\textbf{Subcase(\romannumeral 4.1)} 
If $d_{m+3}:E_{m+3}^{0,m+3}\to E_{m+3}^{m+3,1}$ is nontrivial, then $d_{m+3}(1\otimes a^{m-1}b)=t^{m+3}\otimes a$, and 
\[
d_{m+3}:E_{m+3}^{k,l}\to E_{m+3}^{k+m+3,l-m-2}
\] 
is an isomorphism for all $k$ and $l=m+3$ and a trivial homomorphism otherwise. Consequently, 
\begin{align*}
	E_{m+4}^{k,l} & =
	\begin{cases}
		\zz_2 , & k\geqslant m+3;l=0,m+4,\\
		\zz_2 , & 3\leqslant k\leqslant m+2;l=0,1,m+4,\\
		\zz_2 , & 0\leqslant k\leqslant 2;0\leqslant l\leqslant m, l=m+4,\\
		0 , & \text{otherwise}.
	\end{cases}
\end{align*} 
The differential $d_r:E_r^{0,m+4}\to E_r^{r,m+5-r}$($r\geqslant m+4$) can only be nontrivial
when $r=m+5$. If $d_{m+5}:E_{m+5}^{0,m+4}\to E_{m+5}^{m+5,0}$ is trivial, then $d_{m+5}=0:E_{m+5}^{k,m+4}\to E_{m+5}^{k+m+5,0}$ for any $k$. Thus $E_{m+4}^{*,*}=E_{\infty}^{*,*}$. Therefore the bottom line ($l=0$) and the top line ($l=m+4$) of the spectral sequence survive to $E_\infty $, which contradicts \Cref{Bredonthm15}. Therefore, the differential $d_{m+5}:E_{m+5}^{0,m+4}\to E_{m+5}^{m+5,0}$ is nontrivial. Then 
\(
d_{m+5}:E_{m+5}^{k,l}\to E_{m+5}^{k+m+5,l-m-4}
\) 
is an isomorphism for all $k$ and $l=m+4$ and a trivial homomorphism otherwise. Consequently, 
\begin{equation}\label{eq-Einfty4.1}
	E_{m+6}^{k,l} =
	\begin{cases}
		\zz_2 , & k=m+3, m+4;l=0,\\
		\zz_2 , & 3\leqslant k\leqslant m+2;l=0,1,\\
		\zz_2 , & 0\leqslant k\leqslant 2;0\leqslant l\leqslant m,\\
		0 , & \text{otherwise}.
	\end{cases}
\end{equation} 
Note that
\(
d_r:E_r^{k,l}\to E_r^{k+r,l-r+1}
\)
is zero for all $r\geqslant m+6$ as $E_r^{k+r,l-r+1}=0$, so 
\[
E_{m+6}^{*,*}=E_\infty ^{*,*}. 
\]
We observe that the cohomology groups $H^j(X_G)$ are the same as \eqref{eq-HjXG}.

As $E_\infty ^{m+5,0}=0$, by \eqref{eq-x=pi*t}, we have $x^{m+5}=0$. Clearly, $x^3u^2=0, x^{m+3}u=0$. Combining with \eqref{eq-um+1=0}, then  
\[
\text{Tot}E_\infty ^{*,*}\cong \zz_2[x,u]/\langle x^{m+5},u^{m+1},x^3u^2,x^{m+3}u \rangle.
\] 

Similar to the discussion of the filtration \eqref{eq-fil2} or \eqref{eq-fil3} and the particular choice of $y$ in \eqref{eq-yy'}, consider \eqref{eq-Einfty4.1}, we get the following relations: 
\begin{align*}
y^{m+1} & =\alpha_1xy^m+\alpha_2x^2y^{m-1}+\alpha _3x^my+\alpha _4x^{m+1},\\
x^3y^2 & =\beta _1x^4y+\beta_2 x^5,\\
x^{m+3}y & =0,
\end{align*}
where $\alpha_i~(i=1,\dots ,4)$, $\beta_1, \beta_2 \in \zz_2$. So the graded commutative algebra $H^*(X_G)$ is  $\zz_2[x,y]/I$, where $I$ is the ideal given by
\begin{align*}
	I=\langle x^{m+5},y^{m+1}+\alpha_1xy^m+\alpha_2x^2y^{m-1}+\alpha _3x^my+\alpha _4x^{m+1}, x^3y^2+\beta _1x^4y+\beta_2 x^5,x^{m+3}y \rangle,
\end{align*}
where $m\geqslant 2$. If $m=2$, then $\alpha_3=0$. This gives possibility (4) of \Cref{thm-Z2faRPmS4}.

\textbf{Subcase(\romannumeral 4.2)}
If $d_{m+3}: E_{m+3}^{0,m+3}\to E_{m+3}^{m+3,1}$ is trivial and $d_{m+4}:E_{m+4}^{0,m+3}\to E_{m+4}^{m+4,0}$ is nontrivial, then 
\begin{align}\label{eq-case4-dm+3}
& d_{m+3}=0: E_{m+3}^{k,l}\to E_{m+3}^{k+m+3,l-m-2}, \text{for any } k, l, \\
& d_{m+4}(1\otimes a^{m-1}b)=t^{m+4}\otimes 1, \nonumber\\
& d_{m+4}(1\otimes a^mb)=t^{m+4}\otimes a. \nonumber
\end{align}
Furthermore, we obtain that
\begin{equation}\label{eq-case4-dm+4}
d_{m+4}:E_{m+4}^{k,l}\to E_{m+4}^{k+m+4,l-m-3}
\end{equation}
is an isomorphism for all $k$, $l=m+3,m+4$ and a trivial homomorphism otherwise. Consequently, by \eqref{eq-case4-Em+3}, \eqref{eq-case4-dm+3} and \eqref{eq-case4-dm+4}, we have
\begin{align*}
	E_{m+5}^{k,l} & =
	\begin{cases}
		\zz_2 , & 3\leqslant k\leqslant m+3;l=0,1,\\
		\zz_2 , & 0\leqslant k\leqslant 2;0\leqslant l\leqslant m,\\
		0 , & \text{otherwise}.
	\end{cases}
\end{align*} 
Note that
\(
d_r:E_r^{k,l}\to E_r^{k+r,l-r+1}
\)
is zero for all $r\geqslant m+5$ as $E_r^{k+r,l-r+1}=0$, so 
\[
E_{m+5}^{*,*}=E_\infty ^{*,*}. 
\]
We observe that the cohomology groups $H^j(X_G)$ are the same as \eqref{eq-HjXG}.

As $E_\infty ^{m+4,0}=0$, by \eqref{eq-x=pi*t}, we have $x^{m+4}=0$.  Clearly, $x^3u^2=0$. Combining with \eqref{eq-um+1=0}, then
\[
\text{Tot}E_\infty ^{*,*}\cong \zz_2[x,u]/\langle x^{m+4},u^{m+1},x^3u^2\rangle,
\] 
The graded commutative algebra $H^*(X_G)$ is $\zz_2[x,y]/I$, where $I$ is the ideal given by
\[
I=\langle x^{m+4},y^{m+1}+\alpha_1xy^m+\alpha_2x^2y^{m-1}+\alpha _3x^my+\alpha _4x^{m+1},x^3y^2+\beta _1x^4y+\beta_2 x^5\rangle,
\] 
where $m\geqslant 2$ and $\alpha _i~(i=1,\dots ,4)$, $\beta_1, \beta_2\in \zz_2$. If $m=2$, then $\alpha_3=0$. This gives possibility (5) of \Cref{thm-Z2faRPmS4}.

\textbf{Case(\romannumeral 5)}
$d_2(1\otimes a)=0$ and $d_2(1\otimes b)=t^2\otimes a^3\ne 0$.

Obviously, $m\geqslant 3$. We have
\begin{align*}
	\begin{cases}
		d_2(1\otimes a^j)=0, & 1\leqslant j\leqslant m, \\
		d_2(1\otimes a^jb)=t^2\otimes a^{j+3}, & 0\leqslant j\leqslant m-3, \\
		d_2(1\otimes a^jb)=0, & m-2\leqslant j\leqslant m.
	\end{cases}
\end{align*}
The $E_2$-term and $d_2$-differentials look like \Cref{case5-d2}. 
\begin{figure}[h]
\centering
\DeclareSseqGroup\tower{}{\class(0,0)\foreach \y in {1,...,6}{\class(0,\y) \structline }}
\DeclareSseqGroup\towerfive{}{\class(0,0)\foreach \y in {1,...,5}{\class(0,\y) \structline }}
\DeclareSseqGroup\towerfour{}{\class(0,0)\foreach \y in {1,...,4}{\class(0,\y) \structline }}
\DeclareSseqGroup\towerthree{}{\class(0,0)\foreach \y in {1,...,3}{\class(0,\y) \structline }}
\begin{sseqpage}[classes = fill, class labels = { left = 0.2em }, x tick step = 2, xscale = 1.3, yscale = 0.8, no ticks,class placement transform = {scale = 2}]
	\class[white](0,0)\class[white](7,0)
	
	\towerfive[struct lines = magenta, classes = black](2,0)\class[black](2,6)\towerthree[struct lines = white, classes = white](2,7)
	\draw[dashed, magenta](1.74,5)--(1.74,6);
	\draw[magenta](2.26,4)--(2.26,5);\draw[dashed, magenta](2.26,5)--(2.26,6);
	\draw[cyan](2,0)--(2.26,4);\draw[cyan](2,1)--(2.26,5);\draw[cyan](2,6)--(2.26,10);
	\foreach \foreach \y in {2,3,4,5}\draw[cyan](2,\y)--(1.87,1+\y);
	\towerthree[struct lines = white, classes = white](2,0)\class[black](2,4)\class[black](2,5)\towerfour[struct lines = magenta, classes = black](2,6)
	
	\towerfive[struct lines = magenta, classes = black](4,0)\class[black](4,6)\towerthree[struct lines = white, classes = white](4,7)
	\draw[dashed, magenta](3.74,5)--(3.74,6);
	\draw[magenta](4.26,4)--(4.26,5);\draw[dashed, magenta](4.26,5)--(4.26,6);
	\draw[cyan](4,0)--(4.26,4);\draw[cyan](4,1)--(4.26,5);\draw[cyan](4,6)--(4.26,10);
	\foreach \foreach \y in {2,3,4,5}\draw[cyan](4,\y)--(3.87,1+\y);
	\towerthree[struct lines = white, classes = white](4,0)\class[black](4,4)\class[black](4,5)\towerfour[struct lines = magenta, classes = black](4,6)
	
	\towerfive[struct lines = magenta, classes = black](6,0)\class[black](6,6)\towerthree[struct lines = white, classes = white](6,7)
	\draw[dashed, magenta](5.74,5)--(5.74,6);
	\draw[magenta](6.26,4)--(6.26,5);\draw[dashed, magenta](6.26,5)--(6.26,6);
	\draw[cyan](6,0)--(6.26,4);\draw[cyan](6,1)--(6.26,5);\draw[cyan](6,6)--(6.26,10);
	\foreach \foreach \y in {2,3,4,5}\draw[cyan](6,\y)--(5.87,1+\y);
	\towerthree[struct lines = white, classes = white](6,0)\class[black](6,4)\class[black](6,5)\towerfour[struct lines = magenta, classes = black](6,6)
	
	\node[background] at (0,-1) {\cdots};
	\node[background] at (2,-1) {k-2\geqslant 0}; \node[background] at (4,-1) {k};\node[background] at (6,-1) {k+2};
	\foreach \y in {0,1,2,3,4,5}\node[background] at (-1,\y) {\y};
	\node[background] at (-1,5.6) {\vdots};\node[background] at (-1,6) {m};
	\foreach \y in {1,2,3,4}\node[background] at (-1.2,6+\y) {m+\y};
	
	\node[background] at (1.28,0.05) {t^{k-2}};\node[background] at (1.20,1.05) {t^{k-2}a};\node[background] at (1.10,6.05) {t^{k-2}a^m};
	\node[background] at (3.43,0.05) {t^{k}};\node[background] at (3.35,1.05) {t^{k}a};\node[background] at (3.30,2.05) {t^{k}a^2};\node[background] at (3.25,6.05) {t^{k}a^m};
	\node[background] at (5.28,0.05) {t^{k+2}};\node[background] at (5.20,1.05) {t^{k+2}a};\node[background] at (5.10,6.05) {t^{k+2}a^m};
	
	\node[background] at (2.8,4.05) {t^{k-2}b};\node[background] at (3.0,10.05) {t^{k-2}a^mb};
	\node[background] at (4.65,4.05) {t^{k}b};\node[background] at (4.75,5.05) {t^{k}ab};\node[background] at (4.85,10.05) {t^{k}a^mb};
	\node[background] at (6.8,4.05) {t^{k+2}b};\node[background] at (7.0,10.05) {t^{k+2}a^mb};
	
	\draw[blue,->](2,4,2)--(4,3);\draw[blue,->](2,5,2)--(4,4);
	\draw[blue,->](2,6,2)--(3.26,5.32);
	\draw[blue,->](2,7,2)--(4,6);
	
	\draw[blue,->](4,4,2)--(6,3);\draw[blue,->](4,5,2)--(6,4);
	\draw[blue,->](4,6,2)--(5.26,5.32);
	\draw[blue,->](4,7,2)--(6,6);
	\node[blue] at (3.0,5.2) {\vdots};\node[blue] at (5.15,5.2) {\vdots};
	
	\foreach \foreach \y in {0,1,2,3,4,5,6,7,8,9,10}
	\draw[lightgray!50, thin](0,\y)--(8,\y);
\end{sseqpage}
\caption{$E_2$-term and $d_2$-differentials in \textbf{Case(\romannumeral 5)}}\label{case5-d2}
\end{figure}
Then
\begin{equation}\label{eq-case5-E3}
	E_3^{k,l} =
	\begin{cases}
		\zz_2 , & k\geqslant 2;l=0,1,2,m+2,m+3,m+4,\\
		\zz_2 , & k=0,1;0\leqslant l\leqslant m, l=m+2,m+3,m+4,\\
		0 , & \text{otherwise}.
	\end{cases}
\end{equation} 
Clearly,  
\(
d_r:E_r^{k,l}\to E_r^{k+r,l-r+1}
\)
is zero for all $3\leqslant r\leqslant m$. So 
\begin{equation}\label{eq-case5-Em+1}
	E_3^{k,l}=E_{m+1}^{k,l}, \text{ for all } k, l.
\end{equation}
The differential $d_r:E_r^{0,m+2}\to E_r^{r,m+3-r}$($r\geqslant m+1$) can only be nontrivial when
$r=m+1,m+2,m+3$. If $d_r:E_r^{0,m+2}\to E_r^{r,m+3-r}$ is trivial for $r=m+1,m+2,m+3$, then $d_r=0:E_r^{k,m+2}\to E_r^{k+r,m+3-r}$ for any $k$, $r=m+1$, $m+2$ and $m+3$. Thus $E_{m+1}^{*,*}=E_\infty ^{*,*}$, at least two lines of the spectral sequence survive to $E_\infty $, which contradicts \Cref{Bredonthm15}. Therefore, we get the following subcases: 
\begin{enumerate}[(\romannumeral 5.1)]\itemindent=2em
	\item $d_{m+1}:E_{m+1}^{0,m+2}\to E_{m+1}^{m+1,2}$ is nontrivial.
	\item $d_{m+1}=0:E_{m+1}^{0,m+2}\to E_{m+1}^{m+1,2}$ and $d_{m+2}:E_{m+2}^{0,m+2}\to E_{m+2}^{m+2,1}$ is nontrivial.
	\item $d_r=0:E_{r}^{0,m+2}\to E_{r}^{r,m+3-r}$, $r=m+1,m+2$ and $d_{m+3}:E_{m+3}^{0,m+2}\to E_{m+3}^{m+3,0}$ is nontrivial.
\end{enumerate}

\textbf{Subcase(\romannumeral 5.1)}
If $d_{m+1}:E_{m+1}^{0,m+2}\to E_{m+1}^{m+1,2}$ is nontrivial, then $d_{m+1}(1\otimes a^{m-2}b)=t^{m+1}\otimes a^2$, and 
\(
d_{m+1}:E_{m+1}^{k,l}\to E_{m+1}^{k+m+1,l-m}
\)
is an isomorphism for all $k$ and $l=m+2$ and a trivial homomorphism otherwise. Consequently, 
\begin{equation}\label{eq-case5.1-Em+2}
	E_{m+2}^{k,l} =
	\begin{cases}
		\zz_2 , & k\geqslant m+1;l=0,1,m+3,m+4,\\
		\zz_2 , & 2\leqslant k\leqslant m;l=0,1,2,m+3,m+4,\\
		\zz_2 , & k=0,1; 0\leqslant l\leqslant m, l=m+3,m+4,\\
		0 , & \text{otherwise}.
	\end{cases}
\end{equation} 
Clearly,  
\(
d_{m+2}:E_{m+2}^{k,l}\to E_{m+2}^{k+m+2,l-m-1}
\)
is zero by degree reasons. So 
\begin{equation}\label{eq-case5.1-Em+3}
	E_{m+2}^{k,l}=E_{m+3}^{k,l}, \text{ for all } k, l.
\end{equation}
The differential $d_r:E_r^{0,m+3}\to E_r^{r,m+4-r}$($r\geqslant m+3$) can only be nontrivial when
$r=m+3,m+4$. If $d_r:E_r^{0,m+3}\to E_r^{r,m+4-r}$ is trivial for $r=m+3,m+4$, then $d_r=0:E_r^{k,m+3}\to E_r^{k+r,m+4-r}$ for any $k$, $r=m+3$ and $m+4$. Thus $E_{m+3}^{*,*}=E_\infty ^{*,*}$, at least two lines of the spectral sequence survive to infinity, which contradicts \Cref{Bredonthm15}. Thus, we get two possibilities:  
\begin{enumerate}[(\romannumeral 5.1.1)]\itemindent=2em
	\item $d_{m+3}:E_{m+3}^{0,m+3}\to E_{m+3}^{m+3,1}$ is nontrivial.
	\item[(\romannumeral 5.1.2)] $d_{m+3}=0:E_{m+3}^{0,m+3}\to E_{m+3}^{m+3,1}$ and $d_{m+4}:E_{m+4}^{0,m+3}\to E_{m+4}^{m+4,0}$ is nontrivial.
\end{enumerate}

\textbf{Subcase(\romannumeral 5.1.1)}
If $d_{m+3}:E_{m+3}^{0,m+3}\to E_{m+3}^{m+3,1}$ is nontrivial, then 
\(
d_{m+3}:E_{m+3}^{k,l}\to E_{m+3}^{k+m+3,l-m-2}
\) 
is an isomorphism for all $k$ and $l=m+3$ and a trivial homomorphism otherwise. Consequently, 
\begin{align*}
	E_{m+4}^{k,l} & =
	\begin{cases}
		\zz_2 , & k\geqslant m+3;l=0,m+4,\\
		\zz_2 , & k=m+1, m+2;l=0,1,m+4,\\
		\zz_2 , & 2\leqslant k\leqslant m;l=0,1,2,m+4,\\
		\zz_2 , & k=0,1;0\leqslant l\leqslant m,l=m+4,\\
		0 , & \text{otherwise}.
	\end{cases}
\end{align*}
The differential $d_r:E_r^{0,m+4}\to E_r^{r,m+5-r}$($r\geqslant m+4$) can only be nontrivial when $r=m+5$. If $d_{m+5}:E_{m+5}^{0,m+4}\to E_{m+5}^{m+5,0}$ is trivial, then $d_{m+5}=0:E_{m+5}^{k,m+4}\to E_{m+5}^{k+m+5,0}$ for any $k$. Thus $E_{m+4}^{*,*}=E_\infty ^{*,*}$, the bottom line ($l=0$) and the top line ($l=m+4$) of the spectral sequence survive to $E_\infty $, which contradicts \Cref{Bredonthm15}. Therefore, $d_{m+5}:E_{m+5}^{0,m+4}\to E_{m+5}^{m+5,0}$ must be nontrivial. Then 
\(
d_{m+5}:E_{m+5}^{k,l}\to E_{m+5}^{k+m+5,l-m-4}
\) 
is an isomorphism for all $k$ and $l=m+4$ and a trivial homomorphism otherwise. Consequently,  
\begin{equation}\label{eq-Einfty5.1.1}
	E_{m+6}^{k,l} =
	\begin{cases}
		\zz_2 , & k=m+3,m+4;l=0,\\
		\zz_2 , & k=m+1,m+2;l=0,1,\\
		\zz_2 , & 2\leqslant k\leqslant m;l=0,1,2,\\
		\zz_2 , & k=0,1;0\leqslant l\leqslant m,\\
		0 , & \text{otherwise}.
	\end{cases}
\end{equation} 
Note that
\(
d_r:E_r^{k,l}\to E_r^{k+r,l-r+1}
\)
is zero for all $r\geqslant m+6$ as $E_r^{k+r,l-r+1}=0$, so 
\[
E_{m+6}^{*,*}=E_\infty ^{*,*}. 
\]
We observe that the cohomology groups $H^j(X_G)$ are the same as \eqref{eq-HjXG}.

As $E_\infty ^{m+5,0}=0$, by \eqref{eq-x=pi*t}, we have $x^{m+5}=0$. Clearly, $x^2u^3=0$, $x^{m+1}u^2=0$, $x^{m+3}u=0$. Combining with \eqref{eq-um+1=0}, then
\[
\text{Tot}E_\infty ^{*,*}\cong \zz_2[x,u]/\langle x^{m+5}, u^{m+1}, x^2u^3, x^{m+1}u^2, x^{m+3}u\rangle.
\] 

Analysing the filtration of $H^*(X_G)$ as in \eqref{eq-fil2} and \eqref{eq-fil3} and choosing the particular $y$ as in \eqref{eq-yy'}, consider \eqref{eq-Einfty5.1.1}, we get the following relations: 
\begin{align*}
y^{m+1} & =\alpha_1xy^m+\alpha_2x^{m-1}y^2+\alpha _3x^my+\alpha _4x^{m+1},\\
x^2y^3 & =\beta _1x^3y^2+\beta _2x^4y+\beta_3 x^5, \\
x^{m+1}y^2 & =\gamma _1x^{m+2}y+\gamma _2x^{m+3}, \\
x^{m+3}y & =0
\end{align*}
for some $\alpha_i~(i=1,\dots ,4)$, $\beta_j ~(j=1,2,3)$, $\gamma_1, \gamma_2 \in \zz_2$.
So the graded commutative algebra $H^*(X_G)$ is  $\zz_2[x,y]/I$, where $I$ is the ideal given by
\begin{align*}
I=\langle  & x^{m+5},y^{m+1}+\alpha_1xy^m+\alpha_2x^{m-1}y^2+\alpha _3x^my+\alpha _4x^{m+1},\\
& x^2y^3+\beta _1x^3y^2+\beta _2x^4y+\beta_3 x^5, x^{m+1}y^2+\gamma _1x^{m+2}y+\gamma _2x^{m+3}, x^{m+3}y\rangle,
\end{align*}
where  $m\geqslant 3$. This gives possibility (6) of \Cref{thm-Z2faRPmS4}.

\textbf{Subcase(\romannumeral 5.1.2)}
If $d_{m+3}:E_{m+3}^{0,m+3}\to E_{m+3}^{m+3,1}$ is trivial and $d_{m+4}:E_{m+4}^{0,m+3}\to E_{m+4}^{m+4,0}$ is nontrivial, then 
\begin{align}\label{eq-case5-dm+3}
	& d_{m+3}=0: E_{m+3}^{k,l}\to E_{m+3}^{k+m+3,l-m-2}, \text{for any } k, l, \\
	& d_{m+4}(1\otimes a^{m-1}b)=t^{m+4}\otimes 1, \nonumber\\
	& d_{m+4}(1\otimes a^mb)=t^{m+4}\otimes a. \nonumber
\end{align}
Furthermore, we obtain that
\begin{equation}\label{eq-case5-dm+4}
	d_{m+4}:E_{m+4}^{k,l}\to E_{m+4}^{k+m+4,l-m-3}
\end{equation}
is an isomorphism for all $k$ and $l=m+3,m+4$ and a trivial homomorphism otherwise. Consequently, by \eqref{eq-case5.1-Em+3}, \eqref{eq-case5-dm+3} and \eqref{eq-case5-dm+4}, we have
\begin{align*}
	E_{m+5}^{k,l} & =
	\begin{cases}
		\zz_2 , & m+1\leqslant k\leqslant m+3;l=0,1,\\
		\zz_2 , & 2\leqslant k\leqslant m;l=0,1,2,\\
		\zz_2 , & k=0,1;0\leqslant l\leqslant m,\\
		0 , & \text{otherwise}.
	\end{cases}
\end{align*} 
Note that
\(
d_r:E_r^{k,l}\to E_r^{k+r,l-r+1}
\)
is zero for all $r\geqslant m+5$ as $E_r^{k+r,l-r+1}=0$, so 
\[
E_{m+5}^{*,*}=E_\infty ^{*,*}. 
\]
We observe that the cohomology groups $H^j(X_G)$ are the the same as \eqref{eq-HjXG}. 

As $E_\infty ^{m+4,0}=0$, by \eqref{eq-x=pi*t}, we have $x^{m+4}=0$. Clearly, $x^2u^3=0$, $x^{m+1}u^2=0$. Combining with \eqref{eq-um+1=0}, then
\[
\text{Tot}E_\infty ^{*,*}\cong \zz_2[x,u]/\langle x^{m+4}, u^{m+1}, x^2u^3, x^{m+1}u^2\rangle.
\] 
The graded commutative algebra $H^*(X_G)$ is  $\zz_2[x,y]/I$, where $I$ is the ideal given by
\begin{align*}
I=\langle & x^{m+4},y^{m+1}+\alpha_1xy^m+\alpha_2x^{m-1}y^2+\alpha _3x^my+\alpha _4x^{m+1},\\
& x^2y^3+\beta _1x^3y^2+\beta _2x^4y+\beta_3 x^5, x^{m+1}y^2+\gamma _1x^{m+2}y+\gamma _2x^{m+3}\rangle,
\end{align*}
where $m\geqslant 3$ and $\alpha _i~(i=1,\dots ,4)$, $\beta_j ~(j=1,2,3)$, $\gamma_1, \gamma_2\in \zz_2$. This gives possibility (7) of \Cref{thm-Z2faRPmS4}.

\textbf{Subcase(\romannumeral 5.2)}
If $d_{m+1}:E_{m+1}^{0,m+2}\to E_{m+1}^{m+1,2}$ is trivial and $d_{m+2}:E_{m+2}^{0,m+2}\to E_{m+2}^{m+2,1}$ is nontrivial, then 
\begin{align}\label{eq-case5-dm+1}
	& d_{m+1}=0: E_{m+1}^{k,l}\to E_{m+1}^{k+m+1,l-m}, \text{for any } k, l, \\
	& d_{m+2}(1\otimes a^{m-2}b)=t^{m+2}\otimes a, \nonumber\\
	& d_{m+2}(1\otimes a^{m-1}b)=t^{m+2}\otimes a^2. \nonumber
\end{align}
Furthermore, we obtain that
\begin{equation}\label{eq-case5-dm+2}
	d_{m+2}:E_{m+2}^{k,l}\to E_{m+2}^{k+m+2,l-m-1}
\end{equation}
is an isomorphism for all $k$ and $l=m+2,m+3$ and a trivial homomorphism otherwise. Consequently, by \eqref{eq-case5-Em+1}, \eqref{eq-case5-dm+1} and \eqref{eq-case5-dm+2}, we have
\begin{align*}
	E_{m+3}^{k,l} & =
	\begin{cases}
		\zz_2 , & k\geqslant m+2;l=0,m+4,\\
		\zz_2 , & 2\leqslant k\leqslant m+1;l=0,1,2,m+4,\\
		\zz_2 , & k=0,1;0\leqslant l\leqslant m, l=m+4,\\
		0 , & \text{otherwise}.
	\end{cases}
\end{align*} 
The differential $d_r:E_r^{0,m+4}\to E_r^{r,m+5-r}$~($r\geqslant m+3$) can only be nontrivial when $r=m+5$. If $d_{m+5}:E_{m+5}^{0,m+4}\to E_{m+5}^{m+5,0}$ is trivial, then $d_{m+5}=0:E_{m+5}^{k,m+4}\to E_{m+5}^{k+m+5,0}$ for any $k$.  Thus $E_{m+3}^{*,*}=E_\infty ^{*,*}$, the bottom line ($l=0$) and the top line ($l=m+4$) of the spectral sequence survive to $E_\infty $, which contradicts \Cref{Bredonthm15}. Therefore, $d_{m+5}:E_{m+5}^{0,m+4}\to E_{m+5}^{m+5,0}$ is nontrivial. Then 
\(
d_{m+5}:E_{m+5}^{k,l}\to E_{m+5}^{k+m+5,l-m-4}
\) 
is an isomorphism for all $k$ and $l=m+4$ and a trivial homomorphism otherwise. Consequently, 
\begin{align*}
	E_{m+6}^{k,l} & =
	\begin{cases}
		\zz_2 , & m+2\leqslant k \leqslant m+4;l=0,\\
		\zz_2 , & 2\leqslant k\leqslant m+1;l=0,1,2,\\
		\zz_2 , & k=0,1;0\leqslant l\leqslant m,\\
		0 , & \text{otherwise}.
	\end{cases}
\end{align*}
Note that
\(
d_r:E_r^{k,l}\to E_r^{k+r,l-r+1}
\)
is zero for all $r\geqslant m+6$ as $E_r^{k+r,l-r+1}=0$, so 
\[
E_{m+6}^{*,*}=E_\infty ^{*,*}. 
\]
We observe that the cohomology groups $H^j(X_G)$ are the same as \eqref{eq-HjXG}.

As $E_\infty ^{m+5,0}=0$, by \eqref{eq-x=pi*t}, we have $x^{m+5}=0$. Clearly, $x^2u^3=0$, $x^{m+2}u=0$. Combining with \eqref{eq-um+1=0}, then
\[
\text{Tot}E_\infty ^{*,*}\cong \zz_2[x,u]/\langle x^{m+5}, u^{m+1}, x^2u^3, x^{m+2}u\rangle.
\] 
The graded commutative algebra  $H^*(X_G)$ is $\zz_2[x,y]/I$, where $I$ is the ideal given by
\begin{align*}
I=\langle & x^{m+5},y^{m+1}+\alpha_1xy^m+\alpha_2x^{m-1}y^2+\alpha _3x^my+\alpha _4x^{m+1},\\
& x^2y^3+\beta _1x^3y^2+\beta _2x^4y+\beta_3 x^5, x^{m+2}y\rangle,
\end{align*}
where $m\geqslant 3$ and $\alpha_i~(i=1,\dots ,4)$, $\beta_j~(j=1,2,3)\in \zz_2$. This gives possibility (8) of \Cref{thm-Z2faRPmS4}.

\textbf{Subcase(\romannumeral 5.3)}
If $d_r:E_r^{0,m+2}\to E_r^{r,m+3-r}$ is trivial for $r=m+1,m+2$ and $d_{m+3}:E_{m+3}^{0,m+2}\to E_{m+3}^{m+3,0}$ is nontrivial, then 
\begin{align}\label{eq-case5-dm+1m+2}
	& d_{r}=0: E_{r}^{k,l}\to E_{r}^{k+r,l+1-r}, \text{for any } k, l\text{ and } r=m+1, m+2\\
	& d_{m+3}(1\otimes a^{j}b)=t^{m+3}\otimes a^{j-m+2}, j=m-2, m-1, m. \nonumber
\end{align}
Furthermore, we obtain that
\begin{equation}\label{eq-case5.3-dm+3}
	d_{m+3}:E_{m+3}^{k,l}\to E_{m+3}^{k+m+3,l-m-2}
\end{equation}
is an isomorphism for all $k$ and $l=m+2,m+3,m+4$ and a trivial homomorphism otherwise. Consequently, by \eqref{eq-case5-Em+1}, \eqref{eq-case5-dm+1m+2} and \eqref{eq-case5.3-dm+3}, we have
\begin{align*}
	E_{m+4}^{k,l} & =
	\begin{cases}
		\zz_2 , & 2\leqslant k\leqslant m+2;l=0,1,2,\\
		\zz_2 , & k=0,1;0\leqslant l\leqslant m,\\
		0 , & \text{otherwise}.
	\end{cases}
\end{align*} 
Note that
\(
d_r:E_r^{k,l}\to E_r^{k+r,l-r+1}
\)
is zero for all $r\geqslant m+4$ as $E_r^{k+r,l-r+1}=0$, so 
\[
E_{m+4}^{*,*}=E_\infty ^{*,*}. 
\]
We observe that the cohomology groups $H^j(X_G)$ are the same as \eqref{eq-HjXG}.

As $E_\infty ^{m+3,0}=0$, by \eqref{eq-x=pi*t}, we have $x^{m+3}=0$. Clearly, $x^2u^3=0$. Combining with \eqref{eq-um+1=0}, then
\[
\text{Tot}E_\infty ^{*,*}\cong \zz_2[x,u]/\langle x^{m+3}, u^{m+1}, x^2u^3\rangle.
\] 
The graded commutative algebra $H^*(X_G)$ is  $\zz_2[x,y]/I$, where $I$ is the ideal given by
\[
I=\langle x^{m+3},y^{m+1}+\alpha_1xy^m+\alpha_2x^{m-1}y^2+\alpha _3x^my+\alpha _4x^{m+1},x^2y^3+\beta _1x^3y^2+\beta _2x^4y+\beta_3 x^5\rangle,
\] 
where $m\geqslant 3$ and $\alpha _i~(i=1,\dots ,4)$, $\beta_j~(j=1,2,3)\in \zz_2$. This gives possibility (9) of \Cref{thm-Z2faRPmS4}. \qed

\subsection{Proof of \Cref{thm-Z2faCPmS4}}

Let $G=\zz_2$ act freely on $X\sim _2 \cp^m \times S^4$.  For $m\geqslant 2$, we have
\begin{align*}
	H^l(X) & =
	\begin{cases}
		\zz_2, & l=0,2,2m+2,2m+4,\\
		(\zz_2)^2, & l=4,6,\dots ,2m,\\
		0, & \text{otherwise}.
	\end{cases}
\end{align*}
For $m=1$, we have
\begin{align*}
	H^l(X) & =
	\begin{cases}
		\zz_2, & l=0,2,4,6,\\
		0, & \text{otherwise}.
	\end{cases}
\end{align*}
Note that $E_2^{k,l}=H^k(B_G)\otimes H^l(X)=0$ for $l$ odd. This gives $d_r=0$ for $r$ even. Let $a\in H^2(X)$ and $b\in H^4(X)$ be generators of the cohomology algebra of $H^*(X)$, satisfying $a^{m+1}=0$ and $b^2=0$. As in the proof of \Cref{thm-Z2faRPmS4}, it is clear that $t\otimes 1\in E_2^{1,0}$ is a permanent cocycle and survives to a nontrivial element $x\in E_{\infty}^{1,0}$, i.e., 
\begin{equation}\label{eq-P21}
	x=\pi ^*(t)\in E_\infty ^{1,0}\subset H^1(X_G). 
\end{equation}

Since $\zz_2$ acts freely on $X$, by \Cref{Bredonthm15}, the spectral sequence does not collapse. Otherwise, we get $H^i(X/G)\ne 0$ for infinitely many values of $i>2m+4$. This implies that some differential $d_r:E_r^{k,l}\to E_r^{k+r,l-r+1}$ must be nontrivial. Note that $E_2^{*,*}$ is generated by $t\otimes 1\in E_2^{1,0}$, $1\otimes a\in E_2^{0,2}$ and $1\otimes b\in E_2^{0,4}$. There can only be nontrivial differentials $d_r$ on the generators when $r=3,5$. It follows immediately that there are three possibilities for nontrivial differentials:

\begin{enumerate}[(i)]\itemindent=2em
	\item $d_3(1\otimes a)\ne 0$.
	\item $d_3(1\otimes a)=0$, $d_3(1\otimes b)=0$ and $d_5(1\otimes b)\ne 0$.
	\item $d_3(1\otimes a)=0$ and $d_3(1\otimes b)\ne 0$.
\end{enumerate}

\textbf{Case(\romannumeral 1)} 
$d_3(1\otimes a)=t^3\otimes 1\ne 0$. 

If $m$ is even, then $a^{m+1}=0$ gives $0=d_3((1\otimes a^m)(1\otimes a))=t^3 \otimes a^m$, a contradiction. Hence $m$ must be odd. There are two possible subcases: either $d_3(1\otimes b)=t^3\otimes a\ne 0$ or $d_3(1\otimes b)=0$.

Firstly, let's consider $d_3(1\otimes b)=t^3\otimes a\ne 0$. Note that by the derivation property of the differential we have
\begin{align*}
	\begin{cases}
		d_3(1\otimes a^j)=j(t^3\otimes a^{j-1}), & 1\leqslant j\leqslant m, \\
		d_3(1\otimes a^jb)=j(t^3\otimes a^{j-1}b)+t^3\otimes a^{j+1}, & 0\leqslant j\leqslant m-1, \\
		d_3(1\otimes a^mb)=t^3\otimes a^{m-1}b.
	\end{cases}
\end{align*}
Note that 
\begin{align*}
	d_3(1\otimes ab) & =
	\begin{cases}
		t^3\otimes b+t^3\otimes a^2, & m>1,\\
		t^3\otimes b, & m=1.
	\end{cases}\\
	d_3d_3(1\otimes ab) & =
	\begin{cases}
		d_3(t^3\otimes b+t^3\otimes a^2), & m>1,\\
		d_3(t^3\otimes b), & m=1.
	\end{cases}\\
	& =t^6\otimes a\ne 0
\end{align*} 
This contradicts $d_3d_3=0$, thus $d_3(1\otimes b)=0$. 

By the derivation property of the differential we have
\begin{align*}
	\begin{cases}
		d_3(1\otimes a^j)=j(t^3\otimes a^{j-1}), & 1\leqslant j\leqslant m, \\
		d_3(1\otimes a^jb)=j(t^3\otimes a^{j-1}b), & 0\leqslant j\leqslant m.
	\end{cases}
\end{align*}
The $E_3$-term and $d_3$-differentials look like \Cref{case1-d3}. 
\begin{figure}[h]
\centering
\DeclareSseqGroup\tower{}{\class(0,0)\foreach \y in {1,...,6}{\class(0,\y) \structline }}
\DeclareSseqGroup\towerfive{}{\class(0,0)\foreach \y in {1,...,5}{\class(0,\y) \structline }}
\DeclareSseqGroup\towerfour{}{\class(0,0)\foreach \y in {1,...,4}{\class(0,\y) \structline }}
\DeclareSseqGroup\towerthree{}{\class(0,0)\foreach \y in {1,...,3}{\class(0,\y) \structline }}

\begin{sseqpage}[classes = fill, class labels = { left = 0.2em }, x tick step = 2, xscale = 1.3, yscale = 0.8, no ticks,class placement transform = {scale = 2}]
	\class[white](0,0)\class[white](7,0)
	\towerthree[struct lines = magenta, classes = black](2,0) \class[black](2,4) \class[black](2,5) \class[white](2,6) \class[white](2,7)
	\draw[dashed, magenta](1.74,3)--(1.74,4);\draw[magenta](1.74,4)--(1.74,5);
	\draw[magenta](2.26,2)--(2.26,3);\draw[dashed, magenta](2.26,3)--(2.26,4); \draw[magenta](2.26,4)--(2.26,5); \draw[magenta](2.26,5)--(2.26,6);\draw[magenta](2.26,6)--(2.26,7);
	\draw[cyan](2,0)--(2.26,2);\draw[cyan](2,1)--(2.26,3);\draw[cyan](2,4)--(2.26,6);\draw[cyan](2,5)--(2.26,7);
	\foreach \foreach \y in {2,3}\draw[cyan](2,\y)--(1.93,0.73+\y);
	\class[white](2,0) \class[white](2,1) \class[black](2,2) \class[black](2,3) \class[black](2,4) \class[black](2,5) \class[black](2,6) \class[black](2,7)
	
	\towerthree[struct lines = magenta, classes = black](4,0) \class[black](4,4) \class[black](4,5) \class[white](4,6) \class[white](4,7)
	\draw[dashed, magenta](3.74,3)--(3.74,4);\draw[magenta](3.74,4)--(3.74,5);
	\draw[magenta](4.26,2)--(4.26,3);\draw[dashed, magenta](4.26,3)--(4.26,4); \draw[magenta](4.26,4)--(4.26,5); \draw[magenta](4.26,5)--(4.26,6);\draw[magenta](4.26,6)--(4.26,7);
	\draw[cyan](4,0)--(4.26,2);\draw[cyan](4,1)--(4.26,3);\draw[cyan](4,4)--(4.26,6);\draw[cyan](4,5)--(4.26,7);
	\foreach \foreach \y in {2,3,4}\draw[cyan](4,\y)--(3.93,0.73+\y);
	\class[white](4,0) \class[white](4,1) \class[black](4,2) \class[black](4,3) \class[black](4,4) \class[black](4,5) \class[black](4,6) \class[black](4,7)
	
	\towerthree[struct lines = magenta, classes = black](6,0) \class[black](6,4) \class[black](6,5) \class[white](6,6) \class[white](6,7)
	\draw[dashed, magenta](5.74,3)--(5.74,4);\draw[magenta](5.74,4)--(5.74,5);
	\draw[magenta](6.26,2)--(6.26,3);\draw[dashed, magenta](6.26,3)--(6.26,4); \draw[magenta](6.26,4)--(6.26,5); \draw[magenta](6.26,5)--(6.26,6);\draw[magenta](6.26,6)--(6.26,7);
	\draw[cyan](6,0)--(6.26,2);\draw[cyan](6,1)--(6.26,3);\draw[cyan](6,4)--(6.26,6);\draw[cyan](6,5)--(6.26,7);
	\foreach \foreach \y in {2,3,4}\draw[cyan](6,\y)--(5.93,0.73+\y);
	\class[white](6,0) \class[white](6,1) \class[black](6,2) \class[black](6,3) \class[black](6,4) \class[black](6,5) \class[black](6,6) \class[black](6,7)
	
	\node[background] at (0,-1) {\cdots};
	\node[background] at (2,-1) {k-3\geqslant 0}; \node[background] at (4,-1) {k}; \node[background] at (6,-1) {k+3};
	\node[background] at (-1,0) {0};\node[background] at (-1,1) {2};
	\node[background] at (-1,2) {4};\node[background] at (-1,3) {6};
	\node[background] at (-1,3.6) {\vdots};\node[background] at (-1.3,4) {2m-2}; \node[background] at (-1.05,5) {2m};
	\node[background] at (-1.3,6) {2m+2};\node[background] at (-1.3,7) {2m+4};
	
	\draw[blue,->](2,1)--(4,0);\draw[blue,->](2,3)--(4,2);\draw[blue,->](2,5)--(4,4);
	\draw[blue,->](2,3,2)--(4,2,2);\draw[blue,->](2,5,2)--(4,4,2);\draw[blue,->](2,7,2)--(4,6,2);
	
	\draw[blue,->](4,1)--(6,0);\draw[blue,->](4,3)--(6,2);\draw[blue,->](4,5)--(6,4);
	\draw[blue,->](4,3,2)--(6,2,2);\draw[blue,->](4,5,2)--(6,4,2);\draw[blue,->](4,7,2)--(6,6,2);
	
	\foreach \foreach \y in {0,1,2,3,4,5,6,7}\draw[lightgray!50, thin](0,\y)--(8,\y);
	
	\node[background] at (1.28,0.05) {t^{k-3}};\node[background] at (1.20,1.05) {t^{k-3}a};\node[background] at (1.10,5.05) {t^{k-3}a^m};
	\node[background] at (3.43,0.05) {t^{k}};\node[background] at (3.35,1.05) {t^{k}a};\node[background] at (3.25,5.05) {t^{k}a^m};
	\node[background] at (5.28,0.05) {t^{k+3}};\node[background] at (5.20,1.05) {t^{k+3}a};\node[background] at (5.15,5.05) {t^{k+3}a^m};
	
	\node[background] at (2.8,2.05) {t^{k-3}b};\node[background] at (3.0,7.05) {t^{k-3}a^mb};
	\node[background] at (4.65,2.05) {t^{k}b};\node[background] at (4.7,3.05) {t^{k}ab};\node[background] at (4.85,7.05) {t^{k}a^mb};
	\node[background] at (6.8,3.05) {t^{k+3}b};\node[background] at (7.0,7.05) {t^{k+3}a^mb};
\end{sseqpage}
\caption{$E_3$-term and $d_3$-differentials in \textbf{Case(\romannumeral 1)}}\label{case1-d3}
\end{figure}
Then
\begin{align*}
	E_4^{k,l} & =
	\begin{cases}
		\zz_2 , & 0\leqslant k\leqslant2; l=0,2m+2,\\
		(\zz_2)^2, & 0\leqslant k\leqslant2;l=4,8,12,\dots ,2m-2,\\
		0 , & \text{otherwise}.
	\end{cases}
\end{align*}  
Note that $d_r:E_r^{k,l}\to E_r^{k+r,l-r+1}$ is zero for all $r\geqslant 4$ as $E_r^{k+r,l-r+1}=0$, so 
\[
E_4^{*,*}=E_\infty ^{*,*}. 
\]

Since $H^*(X_G)\cong \text{Tot}E_\infty ^{*,*}$, the additive structure of $H^*(X_G)$ is given by 
\begin{align*}
	H^j(X_G) & =
	\begin{cases}
		\zz_2 , & 0\leqslant j\leqslant 2 \text{~or~} 2m+2\leqslant j\leqslant 2m+4,\\
		(\zz_2)^2 , & 4\leqslant j\leqslant 2m \text{~and~} j\ne 7,11,15, \dots ,2m-3,\\
		0 , & \text{otherwise}.
	\end{cases}
\end{align*} 

As $E_\infty ^{3,0}=0$, by \eqref{eq-P21}, we have $x^3=0$. Notice that, the elements $1\otimes a^2\in E_2^{0,4}$ and $1\otimes b\in E_2^{0,4}$ are permanent cocycles and are not hit by any $d_r$-coboundaries. Hence, they determine nontrivial elements $u\in E_\infty ^{0,4}$ and $v\in E_\infty ^{0,4}$, respectively. We have $u^{\frac{m+1}{2} }=0$ as $a^{m+1}=0$, and $v^2=0$ as $b^2=0$. Thus
\[
\text{Tot}E_\infty ^{*,*}\cong \zz_2[x,u,v]/\langle x^2,u^{\frac{m+1}{2}},v^2 \rangle,
\] 
where $\deg x=1$, $\deg u=4$, $\deg v=4$.

By the edge homomorphism, let $y\in H^4(X_G)$ and $z\in H^4(X_G)$ be such that $i^*(y)=a^2$ and $i^*(z)=b$, respectively. Notice that $y^{\frac{m+1}{2}}\in H^{2m+2}(X_G)=E_{\infty}^{0,2m+2}$ is represented by $a^{m+1}\in E_{2}^{0,2m+2}$ and $z^2\in H^8(X_G)=E_{\infty}^{0,8}$ is represented by $b^2\in E_{2}^{0,8}$. Since the edge homomorphism is an isomorphism in degrees 8 and $2m+2$, we have the following relations:
\[
y^{\frac{m+1}{2}}=0,~z^2=0.
\]
Therefore,
\[
H^*(X_G)=\zz_2[x,y,z]/\langle x^3,y^{\frac{m+1}{2}},z^2\rangle,
\] 
where $\deg x=1$, $\deg y=4$, $\deg z=4$ and $m$ is odd. This gives possibility (1) of \Cref{thm-Z2faCPmS4}.

\textbf{Case(\romannumeral 2)}
$d_3(1\otimes a)=0$, $d_3(1\otimes b)=0$ and $d_5(1\otimes b)=t^5\otimes 1\ne 0$.

This case implies that $d_3=0$. We have
\begin{align*}
	\begin{cases}
		d_5(1\otimes a^j)=0, & 1\leqslant j\leqslant m, \\
		d_5(1\otimes a^jb)=t^5\otimes a^j, & 0\leqslant j\leqslant m.
	\end{cases}
\end{align*}
and
\[
\xymatrix@R=2mm{
	E_5^{k-5,l+4}\ar[r]^-{d_5} & E_5^{k,l}\ar[r]^-{d_5} & E_5^{k+5,l-4},\\
	t^{k-5}\otimes a^{\frac{l}{2}}b\ar@{|->}[r]^-{d_5} & t^k\otimes a^{\frac{l}{2}}\ar@{|->}[r]^-{d_5} & 0,  \\
	t^{k-5}\otimes a^{\frac{l}{2}+4}\ar@{|->}[r]^-{d_5} & 0,~t^k\otimes a^{\frac{l}{2}-4}b\ar@{|->}[r]^-{d_5} & t^{k+5}\otimes a^{\frac{l}{2}-4}.}
\]
So
\begin{align*}
	E_6^{k,l} & =
	\begin{cases}
		\zz_2 , & 0\leqslant k\leqslant 4;l=0,2,\dots ,2m,\\
		0 , & \text{otherwise}.
	\end{cases}
\end{align*} 
Note that
\(
d_r:E_r^{k,l}\to E_r^{k+r,l-r+1}
\)
is zero for all $r\geqslant 6$ as $E_r^{k+r,l-r+1}=0$, so 
\[
E_6^{*,*}=E_\infty ^{*,*}. 
\]
The additive structure of $H^*(X_G)$ is given by 
\begin{align}\label{eq-cpms4caseii}
	H^j(X_G) & =
	\begin{cases}
		\zz_2 , & j=0,1,2m+3,2m+4,\\
		(\zz_2)^2, & j=2,2m+2 \text{ or }j=3,5,\dots ,2m+1,\\
		(\zz_2)^3, & j=4,6,\dots ,2m,\\
		0 , & \text{otherwise}.
	\end{cases}
\end{align} 

Notice that, the element $1\otimes a\in E_2^{0,2}$ is a permanent cocycle and is not a $d_r$-coboundary. Hence, it determines a nontrivial element $u\in E_\infty ^{0,2}$. As we have remarked, $a^{m+1}=0$, so 
\begin{equation}\label{eq-cpm-um+1=0}
	u^{m+1}=0.
\end{equation}
As $E_\infty ^{5,0}=0$,  by \eqref{eq-P21}, we have $x^5=0$. Thus
\[
\text{Tot}E_\infty ^{*,*}\cong \zz_2[x,u]/\langle x^5,u^{m+1}\rangle,
\] 
where $\deg x=1$, $\deg u=2$. 

Now, choose $y\in H^2(X_G)$ such that $i^*(y)=a$. By considering the filtration on $H^{2m+2}(X_G)$, 
\begin{equation}\label{eq-fil4}
	0=F_{2m+2}^{2m+2}=\cdots =F_{5}^{2m+2}\subset F_{4}^{2m+2}=F_{3}^{2m+2} \subset F_{2}^{2m+2}=F_{1}^{2m+2}= F_{0}^{2m+2}=H^{2m+2}(X_G),
\end{equation}
\vskip -8mm

\hskip 4.cm$\underbrace{\hskip 11mm}_{E_{\infty}^{4,2m-2}}$~
\hskip 21mm$\underbrace{\hskip 11mm}_{E_{\infty}^{2,2m}}$~
\hskip 3mm

\noindent we get the following relation: 
\[
y^{m+1}=\alpha_1x^2y^m+\alpha _2x^4y^{m-1},
\] 
where $\alpha_1, \alpha_2\in \zz_2$. Therefore, 
\[
H^*(X_G)=\zz_2[x,y]/\langle x^5,y^{m+1}+\alpha_1x^2y^m+\alpha _2x^4y^{m-1}\rangle,
\] 
where $\deg x=1$, $\deg y=2$. This gives possibility (2) of \Cref{thm-Z2faCPmS4}.

{\bf In the remaining \textbf{Case(\romannumeral 3)} there will be classes $u\in E_\infty ^{0,2}$, $y\in H^2(X_G)$ defined as above and the relation \eqref{eq-cpm-um+1=0} will be satisfied}. 

\textbf{Case(\romannumeral 3)}
$d_3(1\otimes a)=0$ and $d_3(1\otimes b)\ne 0$. 

Clearly, $d_3(1\otimes b)=t^3\otimes a$. So we have
\begin{align*}
	\begin{cases}
		d_3(1\otimes a^j)=0, & 1\leqslant j\leqslant m, \\
		d_3(1\otimes a^jb)=t^3\otimes a^{j+1}, & 0\leqslant j\leqslant m-1, \\
		d_3(1\otimes a^mb)=0.
	\end{cases}
\end{align*}
The $E_3$-term and $d_3$-differentials look like \Cref{case3-d3}. 
\begin{figure}[h]
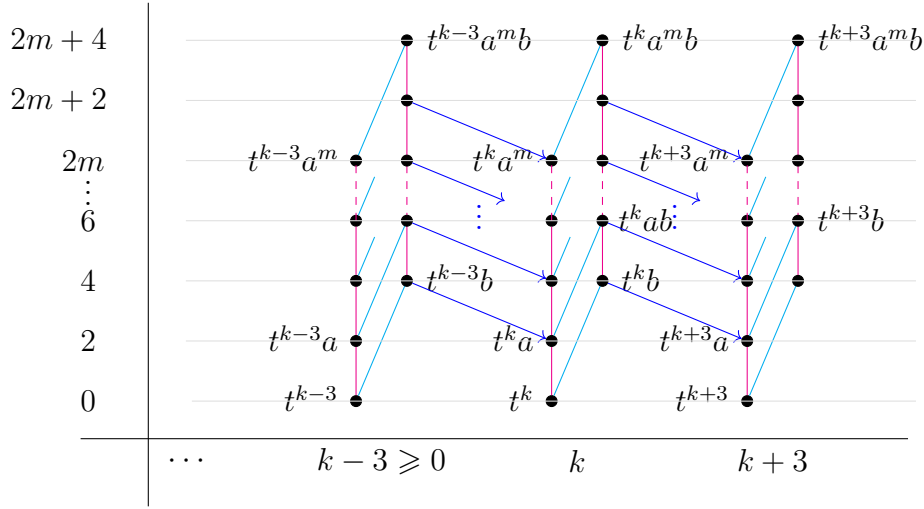

\centering
\DeclareSseqGroup\tower{}{\class(0,0)\foreach \y in {1,...,6}{\class(0,\y) \structline }}
\DeclareSseqGroup\towerfive{}{\class(0,0)\foreach \y in {1,...,5}{\class(0,\y) \structline }}
\DeclareSseqGroup\towerfour{}{\class(0,0)\foreach \y in {1,...,4}{\class(0,\y) \structline }}
\DeclareSseqGroup\towerthree{}{\class(0,0)\foreach \y in {1,...,3}{\class(0,\y) \structline }}
\begin{sseqpage}[classes = fill, class labels = { left = 0.2em }, x tick step = 2, xscale = 1.3, yscale = 0.8, no ticks,class placement transform = {scale = 2}]
	\class[white](0,0)\class[white](7,0)
	\towerthree[struct lines = magenta, classes = black](2,0) \class[black](2,4) \class[white](2,5) \class[white](2,6)
	\draw[dashed, magenta](1.74,3)--(1.74,4);
	\draw[magenta](2.26,2)--(2.26,3);\draw[dashed, magenta](2.26,3)--(2.26,4);\draw[magenta](2.26,4)--(2.26,5);\draw[magenta](2.26,5)--(2.26,6);
	\draw[cyan](2,0)--(2.26,2);\draw[cyan](2,1)--(2.26,3);\draw[cyan](2,4)--(2.26,6);
	\foreach \foreach \y in {2,3}\draw[cyan](2,\y)--(1.93,0.73+\y);
	\class[white](2,0) \class[white](2,1) \class[black](2,2) \class[black](2,3) \class[black](2,4) \class[black](2,5) \class[black](2,6)
	
	\towerthree[struct lines = magenta, classes = black](4,0) \class[black](4,4) \class[white](4,5) \class[white](4,6)
	\draw[dashed, magenta](3.74,3)--(3.74,4);
	\draw[magenta](4.26,2)--(4.26,3);\draw[dashed, magenta](4.26,3)--(4.26,4);\draw[magenta](4.26,4)--(4.26,5);\draw[magenta](4.26,5)--(4.26,6);
	\draw[cyan](4,0)--(4.26,2);\draw[cyan](4,1)--(4.26,3);\draw[cyan](4,4)--(4.26,6);
	\foreach \foreach \y in {2,3}\draw[cyan](4,\y)--(3.93,0.73+\y);
	\class[white](4,0) \class[white](4,1) \class[black](4,2) \class[black](4,3) \class[black](4,4) \class[black](4,5) \class[black](4,6)
	
	\towerthree[struct lines = magenta, classes = black](6,0) \class[black](6,4) \class[white](6,5) \class[white](6,6)
	\draw[dashed, magenta](5.74,3)--(5.74,4);
	\draw[magenta](6.26,2)--(6.26,3);\draw[dashed, magenta](6.26,3)--(6.26,4);\draw[magenta](6.26,4)--(6.26,5);\draw[magenta](6.26,5)--(6.26,6);
	\draw[cyan](6,0)--(6.26,2);\draw[cyan](6,1)--(6.26,3);\draw[cyan](6,4)--(6.26,6);
	\foreach \foreach \y in {2,3}\draw[cyan](6,\y)--(5.93,0.73+\y);
	\class[white](6,0) \class[white](6,1) \class[black](6,2) \class[black](6,3) \class[black](6,4) \class[black](6,5) \class[black](6,6)
	
	\node[background] at (0,-1) {\cdots};
	\node[background] at (2,-1) {k-3\geqslant 0}; \node[background] at (4,-1) {k}; \node[background] at (6,-1) {k+3};
	\node[background] at (-1,0) {0};\node[background] at (-1,1) {2};
	\node[background] at (-1,2) {4};\node[background] at (-1,3) {6};
	\node[background] at (-1,3.6) {\vdots};\node[background] at (-1.05,4) {2m};
	\node[background] at (-1.3,5) {2m+2};\node[background] at (-1.3,6) {2m+4};
	
	\draw[blue,->](2,2,2)--(4,1);\draw[blue,->](2,3,2)--(4,2);\draw[blue,->](2,4,2)--(3.26,3.33);\draw[blue,->](2,5,2)--(4,4);
	
	\draw[blue,->](4,2,2)--(6,1);\draw[blue,->](4,3,2)--(6,2);\draw[blue,->](4,4,2)--(5.26,3.33);\draw[blue,->](4,5,2)--(6,4);
	\node[blue] at (3.0,3.2) {\vdots};\node[blue] at (5.0,3.2) {\vdots};
	
	\foreach \foreach \y in {0,1,2,3,4,5,6}\draw[lightgray!50, thin](0,\y)--(8,\y);
	
	\node[background] at (1.28,0.05) {t^{k-3}};\node[background] at (1.20,1.05) {t^{k-3}a};\node[background] at (1.10,4.05) {t^{k-3}a^m};
	\node[background] at (3.43,0.05) {t^{k}};\node[background] at (3.35,1.05) {t^{k}a};\node[background] at (3.25,4.05) {t^{k}a^m};
	\node[background] at (5.28,0.05) {t^{k+3}};\node[background] at (5.20,1.05) {t^{k+3}a};\node[background] at (5.10,4.05) {t^{k+3}a^m};
	
	\node[background] at (2.8,2.05) {t^{k-3}b};\node[background] at (3.0,6.05) {t^{k-3}a^mb};
	\node[background] at (4.65,2.05) {t^{k}b};\node[background] at (4.7,3.05) {t^{k}ab};\node[background] at (4.85,6.05) {t^{k}a^mb};
	\node[background] at (6.8,3.05) {t^{k+3}b};\node[background] at (7.0,6.05) {t^{k+3}a^mb};
\end{sseqpage}
\caption{$E_3$-term and $d_3$-differentials in \textbf{Case(\romannumeral 3)}}\label{case3-d3}
\end{figure}
Then
\begin{align*}
	E_5^{k,l} =E_4^{k,l} =
	\begin{cases}
		\zz_2 , & k\geqslant 3;l=0,2m+4,\\
		\zz_2 , & 0\leqslant k\leqslant 2;l=0,2,\dots ,2m\text{ or } l=2m+4,\\
		0 , & \text{otherwise}.
	\end{cases}
\end{align*} 
It is easy to see that 
\(
d_r:E_r^{k,l}\to E_r^{k+r,l-r+1}
\)
is zero for all $5\leqslant r\leqslant 2m+4$. Now if $d_{2m+5}:E_{2m+5}^{0,2m+4}\to E_{2m+5}^{2m+5,0}$ is trivial, then by the multiplicative properties of the spectral sequence, we have $E_{2m+5}^{*,*}=E_\infty ^{*,*}$. Therefore the bottom line ($l=0$) and the top line ($l=2m+4$) of the spectral sequence survive to $E_\infty $, which reduces to $H^i(X/G)\ne 0$ for all {$i>2m+4$}. This contradicts to \Cref{Bredonthm15}. Thus, $d_{2m+5}:E_{2m+5}^{0,2m+4}\to E_{2m+5}^{2m+5,0}$ must be nontrivial. 
It follows immediately that $d_{2m+5}:E_{2m+5}^{k,2m+4}\to E_{2m+5}^{k+2m+5,0}$ is an isomorphism for all $k$. So	
\begin{align*}
	E_{2m+6}^{k,l} & =
	\begin{cases}
		\zz_2 , & 3\leqslant k\leqslant 2m+4;l=0,\\
		\zz_2 , & 0\leqslant k\leqslant 2;l=0,2,\dots ,2m,\\
		0 , & \text{otherwise}.
	\end{cases}
\end{align*}
Note that
\(
d_r:E_r^{k,l}\to E_r^{k+r,l-r+1}
\)
is zero for all $r\geqslant 2m+6$ as $E_r^{k+r,l-r+1}=0$, so 
\[
E_{2m+6}^{*,*}=E_\infty ^{*,*}. 
\]
It follows that the cohomology groups $H^j(X_G)$ are the same \eqref{eq-cpms4caseii} as in {\bf  Case(\romannumeral 2)}. 

As $E_\infty ^{2m+5,0}=0$,  by \eqref{eq-P21}, we have $x^{2m+5}=0$. Clearly, $x^3u=0$. Combining with \eqref{eq-cpm-um+1=0}, then
\[
\text{Tot}E_\infty ^{*,*}\cong \zz_2[x,u]/\langle x^{2m+5},u^{m+1},x^3u\rangle,
\]  
Choose $y'\in H^2(X_G)$ such that $i^*(y')=a$ and let $y = y'+\beta x^2\in H^2(X_G)$, $\beta \in \zz_2$. As before, we conclude that the graded commutative algebra  $H^*(X_G)$ is $\zz_2[x,y]/I$, where $I$ is the ideal given by
\[
I=\langle x^{2m+5},y^{m+1}+\alpha_1x^2y^m+\alpha_2x^{2m+2},x^3y\rangle,
\] 
where $\alpha_1, \alpha_2 \in \zz_2$. This gives possibility (3) of \Cref{thm-Z2faCPmS4}. \qed

\subsection{Proof of \Cref{thm-Z2faHPmS4}}

Let $G=\zz_2$ act freely on $X\sim _2 \mathbb{H}P^m \times S^4$.  We observe that $m\geqslant 1$, 
\begin{align*}
	H^l(X) & =
	\begin{cases}
		\zz_2, & l=0,4m+4,\\
		(\zz_2)^2, & l=4,8,\dots ,4m,\\
		0, & \text{otherwise}.
	\end{cases}
\end{align*}
Note that $E_2^{k,l}=H^k(B_G)\otimes H^l(X)=0$ for $l \not\equiv 0 \pmod{4} $. This gives $d_r=0$ for $2 \leqslant r \leqslant 4$ and hence $E_2^{*,*}=E_5^{*,*}$. 
Let $a\in H^4(X)$ and $b\in H^4(X)$ be generators of the cohomology algebra of $H^*(X)$, satisfying $a^{m+1}=0$ and $b^2=0$. The element $t\otimes 1\in E_2^{1,0}$ is a permanent cocycle and survives to a nontrivial element $x\in E_{\infty}^{1,0}$, i.e.,
\begin{equation}\label{eq-P26}
	x=\pi ^*(t)\in E_\infty ^{1,0}\subset H^1(X_G) 
\end{equation} 

Since $\zz_2$ acts freely on $X$, by \Cref{Bredonthm15}, the spectral sequence does not collapse. It implies that some differential $d_r:E_r^{k,l}\to E_r^{k+r,l-r+1}$ must be nontrivial. Note that $E_2^{*,*}$ is generated by $t\otimes 1\in E_2^{1,0}$, $1\otimes a\in E_2^{0,4}$ and $1\otimes b\in E_2^{0,4}$. The first nontrivial differential $d_r$ occurs possibly only when $r=5$. 
It follows immediately that there are three possibilities for the nontrivial differential:

\begin{enumerate}[(i)]\itemindent=2em
	\item $d_5(1\otimes a)\ne 0$ and $d_5(1\otimes b)\ne 0$.
	\item $d_5(1\otimes a)\ne 0$ and $d_5(1\otimes b)=0$.
	\item $d_5(1\otimes a)=0$ and $d_5(1\otimes b)\ne 0$.
\end{enumerate}

\textbf{Case(\romannumeral 1)} 
$d_5(1\otimes a)=t^5\otimes 1\ne 0$ and $d_5(1\otimes b)=t^5\otimes 1\ne 0$.

Note that by the derivation property of the differential we have
\begin{align*}
	\begin{cases}
		d_5(1\otimes a^j)=j(t^5\otimes a^{j-1}), & 1\leqslant j\leqslant m, \\
		d_5(1\otimes a^jb)=t^5\otimes a^j+j(t^5\otimes a^{j-1}b), & 0\leqslant j\leqslant m.
	\end{cases}
\end{align*}
If $m$ is even, then $a^{m+1}=0$ gives $0=d_5((1\otimes a^m)(1\otimes a))=t^5 \otimes a^m$, a contradiction. Hence $m$ must be odd. The $E_5$-term and $d_5$-differentials look like \Cref{case1-d5-1}. 
\begin{figure}[h]
\centering
\DeclareSseqGroup\tower{}{\class(0,0)\foreach \y in {1,...,6}{\class(0,\y) \structline }}
\DeclareSseqGroup\towerfive{}{\class(0,0)\foreach \y in {1,...,5}{\class(0,\y) \structline }}
\DeclareSseqGroup\towerfour{}{\class(0,0)\foreach \y in {1,...,4}{\class(0,\y) \structline }}
\DeclareSseqGroup\towerthree{}{\class(0,0)\foreach \y in {1,...,3}{\class(0,\y) \structline }}
\begin{sseqpage}[classes = fill, class labels = { left = 0.2em }, x tick step = 2, xscale = 1.5, yscale = 0.8, no ticks,class placement transform = {scale = 2}]
	\class[white](0,0)\class[white](7,0)
	
	\class[black](2,0) \class[black](2,1) \class[black](2,2) \class[black](2,3) \class[black](2,4) \class[white](2,5)
	\draw[magenta](1.74,0)--(1.74,1);\draw[magenta](1.74,1)--(1.74,2); \draw[dashed, magenta](1.74,2)--(1.74,3);\draw[magenta](1.74,3)--(1.74,4);
	\draw[magenta](2.26,1)--(2.26,2);\draw[dashed, magenta](2.26,2)--(2.26,3);
	\draw[magenta](2.26,3)--(2.26,4);\draw[magenta](2.26,4)--(2.26,5);
	\draw[cyan](2,0)--(2.26,1);\draw[cyan](2,1)--(2.26,2);\draw[cyan](2,3)--(2.26,4);\draw[cyan](2,4)--(2.26,5);
	\draw[cyan](2,2)--(1.93,2.36);
	\class[white](2,0) \class[black](2,1) \class[black](2,2) \class[black](2,3) \class[black](2,4) \class[black](2,5)
	
	\class[black](4,0) \class[black](4,1) \class[black](4,2) \class[black](4,3) \class[black](4,4) \class[white](4,5)
	\draw[magenta](3.74,0)--(3.74,1);\draw[magenta](3.74,1)--(3.74,2); \draw[dashed, magenta](3.74,2)--(3.74,3);\draw[magenta](3.74,3)--(3.74,4);
	\draw[magenta](4.26,1)--(4.26,2);\draw[dashed, magenta](4.26,2)--(4.26,3);
	\draw[magenta](4.26,3)--(4.26,4);\draw[magenta](4.26,4)--(4.26,5);
	\draw[cyan](4,0)--(4.26,1);\draw[cyan](4,1)--(4.26,2);\draw[cyan](4,3)--(4.26,4);\draw[cyan](4,4)--(4.26,5);
	\draw[cyan](4,2)--(3.93,2.36);
	\class[white](4,0) \class[black](4,1) \class[black](4,2) \class[black](4,3) \class[black](4,4) \class[black](4,5)
	
	\class[black](6,0) \class[black](6,1) \class[black](6,2) \class[black](6,3) \class[black](6,4) \class[white](6,5)
	\draw[magenta](5.74,0)--(5.74,1);\draw[magenta](5.74,1)--(5.74,2); \draw[dashed, magenta](5.74,2)--(5.74,3);\draw[magenta](5.74,3)--(5.74,4);
	\draw[magenta](6.26,1)--(6.26,2);\draw[dashed, magenta](6.26,2)--(6.26,3);
	\draw[magenta](6.26,3)--(6.26,4);\draw[magenta](6.26,4)--(6.26,5);
	\draw[cyan](6,0)--(6.26,1);\draw[cyan](6,1)--(6.26,2);\draw[cyan](6,3)--(6.26,4);\draw[cyan](6,4)--(6.26,5);
	\draw[cyan](6,2)--(5.93,2.36);
	\class[white](6,0) \class[black](6,1) \class[black](6,2) \class[black](6,3) \class[black](6,4) \class[black](6,5)
	
	\node[background] at (0,-1) {\cdots};
	\node[background] at (2,-1) {k-5\geqslant 0}; \node[background] at (4,-1) {k}; \node[background] at (6,-1) {k+5};
	\node[background] at (-1,0) {0};\node[background] at (-1,1) {4};
	\node[background] at (-1,2) {8};
	\node[background] at (-1,2.6) {\vdots};\node[background] at (-1.3,3) {4m-4}; \node[background] at (-1.05,4) {4m};
	\node[background] at (-1.3,5) {4m+4};
	
	\draw[blue,->](2,1)--(4,0);\draw[blue,->](2,4)--(4,3);
	\draw[blue,->](2,1,2)--(4,0);\draw[blue,->](2,2,2)--(4,1);\draw[blue,->](2,3,2)--(3.26,2.32);\draw[blue,->](2,4,2)--(4,3);\draw[blue,->](2,5,2)--(4,4);
	\draw[blue,->](2,2,2)--(4,1,2);\draw[blue,->](2,3,2)--(3.62,2.32);\draw[blue,->](2,5,2)--(4,4,2);
	
	\draw[blue,->](4,1)--(6,0);\draw[blue,->](4,4)--(6,3);
	\draw[blue,->](4,1,2)--(6,0);\draw[blue,->](4,2,2)--(6,1);\draw[blue,->](4,3,2)--(5.26,2.32);\draw[blue,->](4,4,2)--(6,3);\draw[blue,->](4,5,2)--(6,4);
	\draw[blue,->](4,2,2)--(6,1,2);\draw[blue,->](4,3,2)--(5.62,2.32);\draw[blue,->](4,5,2)--(6,4,2);
	
	\foreach \foreach \y in {0,1,2,3,4,5}\draw[lightgray!50, thin](0,\y)--(8,\y);
	
	\node[background] at (1.33,0.05) {t^{k-5}};\node[background] at (1.25,1.05) {t^{k-5}a};\node[background] at (1.15,4.05) {t^{k-5}a^m};
	\node[background] at (3.48,0.05) {t^{k}};\node[background] at (3.40,1.05) {t^{k}a};\node[background] at (3.30,4.05) {t^{k}a^m};
	\node[background] at (5.33,0.05) {t^{k+5}};\node[background] at (5.25,1.05) {t^{k+5}a};\node[background] at (5.15,4.05) {t^{k+5}a^m};
	
	\node[background] at (2.75,1.05) {t^{k-5}b};\node[background] at (2.95,5.05) {t^{k-5}a^mb};
	\node[background] at (4.60,1.05) {t^{k}b};\node[background] at (4.67,2.05) {t^{k}ab};\node[background] at (4.8,5.05) {t^{k}a^mb};
	\node[background] at (6.75,1.05) {t^{k+5}b};\node[background] at (6.95,5.05) {t^{k+5}a^mb};
\end{sseqpage}
\caption{$E_5$-term and $d_5$-differentials in \textbf{Case(\romannumeral 1)}}\label{case1-d5-1}
\end{figure}
Then
\begin{align*}
	E_6^{k,l} & =
	\begin{cases}
		\zz_2 , & 0\leqslant k\leqslant 4; l=0,4,8,\dots ,4m,\\
		0 , & \text{otherwise}.
	\end{cases}
\end{align*} 
Note that $d_r:E_r^{k,l}\to E_r^{k+r,l-r+1}$ is zero for all $r\geqslant 6$ as $E_r^{k+r,l-r+1}=0$, so 
\[
E_6^{*,*}=E_\infty ^{*,*}. 
\]
Since $H^*(X_G)\cong \text{Tot}E_\infty ^{*,*}$, we have 
\begin{align*}
	H^j(X_G) & =
	\begin{cases}
		\zz_2 , & 0\leqslant j\leqslant 4m+4 \text{~and~} j \ne 4,8,\dots ,4m,\\
		(\zz_2)^2 , & j=4,8,\dots ,4m,\\
		0 , & \text{otherwise}.
	\end{cases}
\end{align*} 

As $E_\infty ^{5,0}=0$, by \eqref{eq-P26}, we have $x^5=0$. Notice that, the elements $1\otimes a^2\in E_2^{0,8}$ and $1\otimes (a+b)\in E_2^{0,4}$ are permanent cocycles and are not hit by any $d_r$-coboundaries. Hence, they determine nontrivial elements $u\in E_\infty ^{0,8}$ and $v\in E_\infty ^{0,4}$, respectively. We have $u^{\frac{m+1}{2} }=0$ as $a^{m+1}=0$, and $v^2+u=0$ as $b^2=0$. Thus
\[
\text{Tot}E_\infty ^{*,*}\cong \zz_2[x,u,v]/\langle x^5,u^{\frac{m+1}{2}},v^2+u \rangle,
\] 
where $\deg x=1$, $\deg u=8$, $\deg v=4$. 

Let $y\in H^8(X_G)$ and $z\in H^4(X_G)$ be such that $i^*(y)=a^2$ and $i^*(z)=a+b$, respectively. By considering the filtrations of $H^{4m+4}(X_G)$ and $H^8(X_G)$, we have the short exact sequence
\begin{equation}\label{eq-hpm-case1Hj=totalCPX}
	0\to E_{\infty}^{4,j-4}\to H^{j}(X_G)\xrightarrow[]{} E_{\infty}^{0,j}\to 0,~j=4m+4\text{~or~}8.
\end{equation}
By \eqref{eq-hpm-case1Hj=totalCPX}, we get the following relations: 
\begin{align*}
	y^{\frac{m+1}{2}} & =\beta x^4y^{\frac{m-1}{2}}z, ~\beta\in\zz_2,\\
	z^2+y & =\alpha x^4z, ~\alpha\in\zz_2.
\end{align*}
 Therefore, 
\[
H^*(X_G)=\zz_2[x,y,z]/\langle x^5,y^{\frac{m+1}{2}}+\beta x^4y^{\frac {m-1}{2}}z, z^2+\gamma y+\alpha x^4z\rangle,
\] 
where $\deg x=1$, $\deg y=8$, $\deg z=4$, $\alpha,\beta,\gamma \in \zz_2$ and $m$ is odd. Also $\gamma=1$ except when $m=1$. 

\textbf{Case(\romannumeral 2)} 
$d_5(1\otimes a)=t^5\otimes 1\ne 0$ and $d_5(1\otimes b)=0$. 

If $m$ is even, then $0=d_5(1\otimes a^{m+1})=t^5\otimes a^m$, a contradiction. So $m$ must be odd. Note that by the derivation property of the differential we have
\begin{align*}
	\begin{cases}
	d_5(1\otimes a^j)=j(t^5\otimes a^{j-1}), & 1\leqslant j\leqslant m, \\
	d_5(1\otimes a^jb)=j(t^5\otimes a^{j-1}b), & 0\leqslant j\leqslant m.
\end{cases}
\end{align*}
The $E_5$-term and $d_5$-differentials look like \Cref{case1-d5-2}. 
\begin{figure}[h]
\centering
\DeclareSseqGroup\tower{}{\class(0,0)\foreach \y in {1,...,6}{\class(0,\y) \structline }}
\DeclareSseqGroup\towerfive{}{\class(0,0)\foreach \y in {1,...,5}{\class(0,\y) \structline }}
\DeclareSseqGroup\towerfour{}{\class(0,0)\foreach \y in {1,...,4}{\class(0,\y) \structline }}
\DeclareSseqGroup\towerthree{}{\class(0,0)\foreach \y in {1,...,3}{\class(0,\y) \structline }}

\begin{sseqpage}[classes = fill, class labels = { left = 0.2em }, x tick step = 2, xscale = 1.5, yscale = 0.8, no ticks,class placement transform = {scale = 2}]	
	\class[white](0,0)\class[white](7,0)
	
	\class[black](2,0) \class[black](2,1) \class[black](2,2) \class[black](2,3) \class[black](2,4) \class[white](2,5)
	\draw[magenta](1.74,0)--(1.74,1); \draw[magenta](1.74,1)--(1.74,2); \draw[dashed, magenta](1.74,2)--(1.74,3); \draw[magenta](1.74,3)--(1.74,4);
	\draw[magenta](2.26,1)--(2.26,2); \draw[dashed, magenta](2.26,2)--(2.26,3);
	\draw[magenta](2.26,3)--(2.26,4); \draw[magenta](2.26,4)--(2.26,5);
	\draw[cyan](2,0)--(2.26,1);\draw[cyan](2,1)--(2.26,2);\draw[cyan](2,3)--(2.26,4);\draw[cyan](2,4)--(2.26,5);
	\draw[cyan](2,2)--(1.93,2.36);
	\class[white](2,0) \class[black](2,1) \class[black](2,2) \class[black](2,3) \class[black](2,4) \class[black](2,5)
	
	\class[black](4,0) \class[black](4,1) \class[black](4,2) \class[black](4,3) \class[black](4,4) \class[white](4,5)
	\draw[magenta](3.74,0)--(3.74,1); \draw[magenta](3.74,1)--(3.74,2); \draw[dashed, magenta](3.74,2)--(3.74,3); \draw[magenta](3.74,3)--(3.74,4);
	\draw[magenta](4.26,1)--(4.26,2); \draw[dashed, magenta](4.26,2)--(4.26,3);
	\draw[magenta](4.26,3)--(4.26,4); \draw[magenta](4.26,4)--(4.26,5);
	\draw[cyan](4,0)--(4.26,1);\draw[cyan](4,1)--(4.26,2);\draw[cyan](4,3)--(4.26,4);\draw[cyan](4,4)--(4.26,5);
	\draw[cyan](4,2)--(3.93,2.36);
	\class[white](4,0) \class[black](4,1) \class[black](4,2) \class[black](4,3) \class[black](4,4) \class[black](4,5)
	
	\class[black](6,0) \class[black](6,1) \class[black](6,2) \class[black](6,3) \class[black](6,4) \class[white](6,5)
	\draw[magenta](5.74,0)--(5.74,1); \draw[magenta](5.74,1)--(5.74,2); \draw[dashed, magenta](5.74,2)--(5.74,3); \draw[magenta](5.74,3)--(5.74,4);
	\draw[magenta](6.26,1)--(6.26,2); \draw[dashed, magenta](6.26,2)--(6.26,3);
	\draw[magenta](6.26,3)--(6.26,4); \draw[magenta](6.26,4)--(6.26,5);
	\draw[cyan](6,0)--(6.26,1);\draw[cyan](6,1)--(6.26,2);\draw[cyan](6,3)--(6.26,4);\draw[cyan](6,4)--(6.26,5);
	\draw[cyan](6,2)--(5.93,2.36);
	\class[white](6,0) \class[black](6,1) \class[black](6,2) \class[black](6,3) \class[black](6,4) \class[black](6,5)
	
	\node[background] at (0,-1) {\cdots};
	\node[background] at (2,-1) {k-5\geqslant 0}; \node[background] at (4,-1) {k}; \node[background] at (6,-1) {k+5};
	\node[background] at (-1,0) {0};\node[background] at (-1,1) {4};
	\node[background] at (-1,2) {8};
	\node[background] at (-1,2.6) {\vdots};\node[background] at (-1.3,3) {4m-4}; \node[background] at (-1.05,4) {4m};
	\node[background] at (-1.3,5) {4m+4};
	
	\draw[blue,->](2,1)--(4,0);\draw[blue,->](2,4)--(4,3);
	\draw[blue,->](2,2,2)--(4,1,2);\draw[blue,->](2,3,2)--(3.52,2.37);\draw[blue,->](2,5,2)--(4,4,2);
	
	\draw[blue,->](4,1)--(6,0);\draw[blue,->](4,4)--(6,3);
	\draw[blue,->](4,2,2)--(6,1,2);\draw[blue,->](4,3,2)--(5.52,2.37);\draw[blue,->](4,5,2)--(6,4,2);
	
	\foreach \foreach \y in {0,1,2,3,4,5}\draw[lightgray!50, thin](0,\y)--(8,\y);
	
	\node[background] at (1.33,0.05) {t^{k-5}};\node[background] at (1.25,1.05) {t^{k-5}a};\node[background] at (1.15,4.05) {t^{k-5}a^m};
	\node[background] at (3.48,0.05) {t^{k}};\node[background] at (3.40,1.05) {t^{k}a};\node[background] at (3.30,4.05) {t^{k}a^m};
	\node[background] at (5.33,0.05) {t^{k+5}};\node[background] at (5.25,1.05) {t^{k+5}a};\node[background] at (5.15,4.05) {t^{k+5}a^m};
	
	\node[background] at (2.75,1.05) {t^{k-5}b};\node[background] at (2.95,5.05) {t^{k-5}a^mb};
	\node[background] at (4.60,1.05) {t^{k}b};\node[background] at (4.65,2.05) {t^{k}ab};\node[background] at (4.8,5.05) {t^{k}a^mb};
	\node[background] at (6.75,1.05) {t^{k+5}b};\node[background] at (6.95,5.05) {t^{k+5}a^mb};	
\end{sseqpage}
\caption{$E_5$-term and $d_5$-differentials in \textbf{Case(\romannumeral 2)}}\label{case1-d5-2}
\end{figure}
Then $E_6^{k,l}$ is the same as in {\bf Case(\romannumeral 1)},
\begin{align*}
	E_6^{k,l} & =
	\begin{cases}
		\zz_2 , & 0\leqslant k\leqslant 4; l=0,4,8,\dots ,4m,\\
		0 , & \text{otherwise}.
	\end{cases}
\end{align*} 
Thus the cohomology groups $H^j(X_G)$ are also the same as in {\bf  Case(\romannumeral 1)}, 
\begin{align*}
	H^j(X_G) & =
	\begin{cases}
		\zz_2 , & 0\leqslant j\leqslant 4m+4 \text{~and~} j \ne 4,8,\dots ,4m,\\
		(\zz_2)^2 , & j=4,8,\dots ,4m,\\
		0 , & \text{otherwise}.
	\end{cases}
\end{align*} 

As $E_\infty ^{5,0}=0$, by \eqref{eq-P26}, we have $x^5=0$. Notice that, the elements $1\otimes a^2\in E_2^{0,8}$ and $1\otimes b\in E_2^{0,4}$ are permanent cocycles and are not hit by any $d_r$-coboundaries. Hence, they determine nontrivial elements $u\in E_\infty ^{0,8}$ and $v\in E_\infty ^{0,4}$, respectively. We have $u^{\frac{m+1}{2} }=0$ as $a^{m+1}=0$, and $v^2=0$ as $b^2=0$. Thus
\[
\text{Tot}E_\infty ^{*,*}\cong \zz_2[x,u,v]/\langle x^5,u^{\frac{m+1}{2}},v^2 \rangle,
\] 
where $\deg x=1$, $\deg u=8$, $\deg v=4$. 

Let $y\in H^8(X_G)$ and $z\in H^4(X_G)$ be such that $i^*(y)=a^2$ and $i^*(z)=b$, respectively. Similar to {\bf Case(\romannumeral 1)}, by \eqref{eq-hpm-case1Hj=totalCPX}, we get the following relations: 
\begin{align*}
	y^{\frac{m+1}{2}} & =\beta x^4y^{\frac{m-1}{2}}z,~\beta \in \zz_2,\\
	z^2 & =\alpha x^4z,~\alpha \in \zz_2.
\end{align*}
Therefore, 
\[
H^*(X_G)=\zz_2[x,y,z]/\langle x^5,y^{\frac{m+1}{2}}+\beta x^4y^{\frac {m-1}{2}}z, z^2+\alpha x^4z\rangle,
\] 
where $\deg x=1$, $\deg y=8$, $\deg z=4$, $\alpha,\beta \in \zz_2$ and $m$ is odd. If $m=1$, then $\beta =0$. 

By combining results in {\bf Case(\romannumeral 1)} and {\bf (\romannumeral 2)}, we can rewrite the result as follows
\[
H^*(X_G)=\zz_2[x,y,z]/\langle x^5,y^{\frac{m+1}{2}}+\beta x^4y^{\frac {m-1}{2}}z,z^2+\gamma y+\alpha x^4z\rangle,
\] 
where $\deg x=1$, $\deg y=8$, $\deg z=4$, $\alpha,\beta,\gamma\in \zz_2$ and $m$ is odd. If $m=1$, then $\beta=0, \gamma=0$. This gives possibility (1) of \Cref{thm-Z2faHPmS4}.

\textbf{Case(\romannumeral 3)}
$d_5(1\otimes a)=0$ and $d_5(1\otimes b)\ne 0$.

Immediately, $d_5(1\otimes b)=t^5\otimes 1$, so we have
\begin{align*}
	\begin{cases}
		d_5(1\otimes a^j)=0, & 1\leqslant j\leqslant m, \\
		d_5(1\otimes a^jb)=t^5\otimes a^j, & 0\leqslant j\leqslant m.
	\end{cases}
\end{align*}
and
\[
\xymatrix@R=2mm{
	E_5^{k-5,l+4}\ar[r]^-{d_5} & E_5^{k,l}\ar[r]^-{d_5} & E_5^{k+5,l-4},\\
	t^{k-5}\otimes a^{\frac{l}{4}}b\ar@{|->}[r]^-{d_5} & t^k\otimes a^{\frac{l}{4}}\ar@{|->}[r]^-{d_5} & 0,  \\
	t^{k-5}\otimes a^{\frac{l}{4}+1}\ar@{|->}[r]^-{d_5} & 0,~t^k\otimes a^{\frac{l}{4}-4}b\ar@{|->}[r]^-{d_5} & t^{k+5}\otimes a^{\frac{l}{4}-4}.}
\]
Then $E_6^{k,l}$ is the same as in {\bf Case(\romannumeral 1)},
\begin{align*}
	E_6^{k,l} & =
	\begin{cases}
		\zz_2 , & 0\leqslant k\leqslant 4; l=0,4,\dots ,4m,\\
		0 , & \text{otherwise}.
	\end{cases}
\end{align*} 
Thus the cohomology groups $H^j(X_G)$ are also the same as in {\bf  Case(\romannumeral 1)}, 
\begin{align*}
	H^j(X_G) & =
	\begin{cases}
		\zz_2 , & 0\leqslant j\leqslant 4m+4 \text{~and~} j \ne 4,8,\dots ,4m,\\
		(\zz_2)^2 , & j=4,8,\dots ,4m,\\
		0 , & \text{otherwise}.
	\end{cases}
\end{align*} 

As $E_\infty ^{5,0}=0$,  by \eqref{eq-P26}, we have $x^5=0$. Notice that, the element $1\otimes a\in E_2^{0,4}$ is a permanent cocycle and is not a $d_r$-coboundary. Hence, it determines a nontrivial element $u\in E_\infty ^{0,4}$. As we have remarked, $a^{m+1}=0$, so $u^{m+1}=0$. Thus
\[
\text{Tot}E_\infty ^{*,*}\cong \zz_2[x,u]/\langle x^5,u^{m+1}\rangle,
\] 
where $\deg x=1$, $\deg u=4$. 

Choose $y'\in H^4(X_G)$ such that $i^*(y')=a$ and let $y = y'+\alpha x^4\in H^4(X_G)$, $\alpha \in \zz_2$.
we get the following relation: 
\[
y^{m+1}=0.
\] 
Therefore, 
\[
H^*(X_G)=\zz_2[x,y]/\langle x^5,y^{m+1}\rangle,
\] 
where $\deg x=1$, $\deg y=4$. This gives possibility (2) of \Cref{thm-Z2faHPmS4}. \qed

\section{Applications to $\zz_2$-equivariant maps}\label{sec-equivmap}

We will now use the above results to study the existence of equivariant maps to and from $X$. This is an application that we find highly motivating. Let $X$ be a compact Hausdorff space with a free involution and the unit $n$-sphere $S^n$ carries the antipodal involution. Let us recall some numerical indices. 
\begin{definition}[{\cite{ConnerFloyd1960}}]\label{ConnerFloyddefind} 
The index of the involution on $X$ is 
\[
{\rm ind} (X)=\max\left\{ n\mid \text{there exists a } \zz_2 \text{-equivariant map } S^n\to X\right \}.  
\]
\end{definition}
\begin{definition}[{\cite{ConnerFloyd1960}}]\label{ConnerFloyddefco} 
The mod 2 cohomology index of the involution on $X$ is
\[
{\rm co}\text{-}{\rm ind}_2(X)=\max\left \{ n\mid \omega^n \ne 0\right \},  
\]
where $\omega \in H^1(X/\zz_2;\zz_2)$ is the Whitney class of the principal $\zz_2$-bundle $X\to X/\zz_2$.
\end{definition}
The above index and co-index are both defined by Conner and Floyd. Further, they gave the relationship between these indices. 
\begin{proposition}[{\cite{ConnerFloyd1960}}]\label{ConnerFloydindco} 
The following holds: {\rm ind}$(X)\leqslant$ {\rm co-ind}$_2(X).$
\end{proposition}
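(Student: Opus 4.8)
The plan is to exploit the naturality of the Whitney class under morphisms of principal $\zz_2$-bundles. Suppose $\mathrm{ind}(X)=n$, so that there is a $\zz_2$-equivariant map $f\colon S^n\to X$, where $S^n$ carries the antipodal involution and $X$ its given free involution. Since both involutions are free, $f$ descends to a map of orbit spaces $\bar f\colon \rp^n=S^n/\zz_2\to X/\zz_2$ defined by $\bar f([v])=[f(v)]$, and the square formed by $f$, $\bar f$ and the two double-cover projections commutes. Because $f$ is equivariant and the vertical maps are principal $\zz_2$-bundles, this square exhibits $S^n\to\rp^n$ as the pullback $\bar f^{*}(X\to X/\zz_2)$; that is, $(f,\bar f)$ is a morphism of principal $\zz_2$-bundles.

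First I would invoke the classifying property of $H^1(-;\zz_2)$ for principal $\zz_2$-bundles. The Whitney class $\omega\in H^1(X/\zz_2;\zz_2)$ is by definition the characteristic class classifying $X\to X/\zz_2$. Since $S^n\to\rp^n$ is the pullback of $X\to X/\zz_2$ along $\bar f$, naturality of the Whitney class gives that $\bar f^{*}(\omega)$ is the Whitney class of $S^n\to\rp^n$. But the double cover $S^n\to\rp^n$ is itself the pullback of $S^\infty\to\rp^\infty$ along the standard inclusion $\rp^n\hookrightarrow\rp^\infty$, so its Whitney class is the nonzero generator $\alpha\in H^1(\rp^n;\zz_2)$. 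Hence $\bar f^{*}(\omega)=\alpha$.

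Next I would use that $\bar f^{*}\colon H^{*}(X/\zz_2;\zz_2)\to H^{*}(\rp^n;\zz_2)$ is a ring homomorphism, together with the well-known ring structure $H^{*}(\rp^n;\zz_2)=\zz_2[\alpha]/\langle\alpha^{n+1}\rangle$, in which $\alpha^n\ne 0$. Therefore
\[
\bar f^{*}(\omega^n)=(\bar f^{*}\omega)^n=\alpha^n\ne 0,
\]
which forces $\omega^n\ne 0$ in $H^n(X/\zz_2;\zz_2)$. By \Cref{ConnerFloyddefco} this yields ${\rm co}\text{-}{\rm ind}_2(X)\geqslant n=\mathrm{ind}(X)$, as claimed; and if $\mathrm{ind}(X)=\infty$, applying the same argument to arbitrarily large $n$ shows $\omega^n\ne 0$ for all $n$, so ${\rm co}\text{-}{\rm ind}_2(X)=\infty$ as well. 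The one step that genuinely requires care is the identification $\bar f^{*}(\omega)=\alpha$, which is precisely the naturality of the first Stiefel--Whitney (Whitney) class under the bundle morphism $(f,\bar f)$ combined with the nontriviality of the Whitney class of $S^n\to\rp^n$; once this is in place, the remainder is formal, resting only on $\bar f^{*}$ being a degree-preserving algebra homomorphism and on the standard cohomology ring of $\rp^n$.
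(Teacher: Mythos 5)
Your proof is correct. Note that the paper itself gives no argument for this proposition: it is imported verbatim from Conner and Floyd \cite{ConnerFloyd1960}, so there is no internal proof to compare against. What you have written is the classical argument (essentially Conner--Floyd's own): an equivariant map $S^n\to X$ between free $\zz_2$-spaces induces a bundle morphism exhibiting $S^n\to\rp^n$ as the pullback of $X\to X/\zz_2$, naturality of the characteristic class in $H^1(-;\zz_2)$ gives $\bar f^{*}(\omega)=\alpha$, and $\alpha^n\ne 0$ in $\zz_2[\alpha]/\langle\alpha^{n+1}\rangle$ forces $\omega^n\ne 0$. All steps check out, including the key identification $\bar f^{*}(\omega)=\alpha$ (a map of principal $\zz_2$-bundles over the same base is an isomorphism, and $S^n\to\rp^n$ is the nontrivial double cover for $n\geqslant 1$, classified by the standard inclusion $\rp^n\hookrightarrow\rp^\infty$); the edge cases $n=0$ and an unbounded index are also handled. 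The only implicit hypothesis worth making explicit is that the base spaces involved are paracompact, so that $H^1(-;\zz_2)$ (here \v Cech cohomology, as the paper uses throughout) does classify principal $\zz_2$-bundles and the naturality argument applies --- this holds in the paper's setting, where $X$ is compact Hausdorff.
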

Given a $G$-space $X$, Volovikov defined a numerical index $i(X)$ as the following.
\begin{definition}[{\cite{Volovikov2000}}]\label{VolovikovdefiX}
The index $i(X)$ is the smallest $r$ such that for some $k,~d_r:E_r^{k-r,r-1} \to E_r^{k,0}$ in the cohomology Leray-Serre spectral sequence of the fibration $X\overset{i} \hookrightarrow X_G\overset{\pi}\to B_G$ is nontrivial.
\end{definition}

Let $\beta_k(X)$  be the $k$-th \emph{Betti number} of the space $X$.  Using Volovikov index, Coelho, Mattos and Santos proved the following results.
\begin{proposition}[{\cite[Theorem 1.1]{Coelho2012}}]\label{Coelhothm11} 
Let $G$ be compact Lie group and $X,Y$ be Hausdorff, path-connected and paracompact free $G$-spaces. With a PID as the coefficient for the cohomology, suppose that $i(X)\geqslant l+1$ for some natural $l\geqslant 1$ and $H^{k+1}(Y/G)=0$ for some $1 \leqslant k \leqslant l$.
\begin{enumerate}[{\rm (i)}]\itemindent=1em
\item If $k=l$ and $\beta _l(X)<\beta _{l+1}(B_G)$, then there is no $G$-equivariant map $f:X\to Y$.
\item If $1 \leqslant k<l$ and $0<\beta _{k+1}(B_G)$, then there is no $G$-equivariant map $f:X\to Y$.
\end{enumerate}
\end{proposition}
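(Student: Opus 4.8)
The plan is to argue by contradiction, assuming a $G$-equivariant map $f\colon X\to Y$ exists, and to extract a contradiction from the bottom row of the Leray--Serre spectral sequence of the Borel fibration. First I would pass to Borel constructions: the equivariant $f$ induces $f_G\colon X_G\to Y_G$ covering the identity on $B_G$, so that $\pi_Y\circ f_G=\pi_X$ and hence, on cohomology, $\pi_X^{\,*}=f_G^{\,*}\circ\pi_Y^{\,*}$ as homomorphisms $H^{*}(B_G)\to H^{*}(X_G)$. Since $G$ acts freely on $X$ and $Y$, the Borel constructions are homotopy equivalent to the orbit spaces (in the spirit of \Cref{Loringcor96}), so $H^{*}(X_G)\cong H^{*}(X/G)$ and $H^{*}(Y_G)\cong H^{*}(Y/G)$. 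In particular $H^{k+1}(Y_G)\cong H^{k+1}(Y/G)=0$, whence $\pi_Y^{\,*}$ vanishes in degree $k+1$ and therefore so does $\pi_X^{\,*}=f_G^{\,*}\circ\pi_Y^{\,*}$ on $H^{k+1}(B_G)$.

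The second step is to translate this into the spectral sequence of $X$. The edge homomorphism identifies the image of $\pi_X^{\,*}\colon H^{k+1}(B_G)\to H^{k+1}(X_G)$ with the bottom-row infinity term ${}^X\!E_\infty^{\,k+1,0}$; in particular $\pi_X^{\,*}=0$ on $H^{k+1}(B_G)$ if and only if ${}^X\!E_\infty^{\,k+1,0}=0$. By the previous step we conclude
\[
{}^X\!E_\infty^{\,k+1,0}=0 .
\]
It therefore remains to show that the hypotheses on $i(X)$ and on the Betti numbers force ${}^X\!E_\infty^{\,k+1,0}\ne 0$, yielding the contradiction.

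For this I would analyze which differentials can reach the bottom term $E_r^{\,k+1,0}$. A differential landing there has source $E_r^{\,k+1-r,\,r-1}$, which vanishes once $r>k+1$; and by \Cref{VolovikovdefiX} the hypothesis $i(X)\ge l+1$ means every $d_r$ into the bottom row with $r\le l$ is trivial. In case (ii), where $1\le k<l$, we have $k+1\le l$, so all potential differentials into $E^{\,k+1,0}$ (those with $r\le k+1\le l$) vanish and none with $r>l$ has a nonzero source; thus ${}^X\!E_\infty^{\,k+1,0}={}^X\!E_2^{\,k+1,0}=H^{k+1}(B_G)$, which is nonzero precisely because $\beta_{k+1}(B_G)>0$. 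In case (i), where $k=l$, the same reasoning leaves exactly one potentially nonzero differential into the bottom term, namely
\[
d_{l+1}\colon {}^X\!E_{l+1}^{\,0,l}\longrightarrow {}^X\!E_{l+1}^{\,l+1,0},
\]
and $i(X)\ge l+1$ guarantees ${}^X\!E_{l+1}^{\,l+1,0}={}^X\!E_2^{\,l+1,0}=H^{l+1}(B_G)$, while ${}^X\!E_{l+1}^{\,0,l}$ is a subquotient of $H^{l}(X)$. Since $\operatorname{rank}{}^X\!E_{l+1}^{\,0,l}\le\beta_l(X)<\beta_{l+1}(B_G)=\operatorname{rank}H^{l+1}(B_G)$, the map $d_{l+1}$ cannot be surjective, so its cokernel ${}^X\!E_{l+2}^{\,l+1,0}={}^X\!E_\infty^{\,l+1,0}$ is nonzero. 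Either way ${}^X\!E_\infty^{\,k+1,0}\ne 0$, contradicting the display above and finishing the argument.

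The step I expect to be most delicate is the bookkeeping in the last paragraph: one must verify that $i(X)\ge l+1$ really pins ${}^X\!E_{l+1}^{\,l+1,0}$ down to all of $H^{l+1}(B_G)$ (no earlier incoming differential has already shrunk it) and that no differential on a page beyond $l+1$ can reach the bottom row, since its source would sit in negative filtration degree. Over a general PID one must also read the strict inequality $\beta_l(X)<\beta_{l+1}(B_G)$ at the level of ranks, so that a rank count alone rules out surjectivity of $d_{l+1}$; this is exactly where the Betti-number hypothesis of case (i) enters, just as $\beta_{k+1}(B_G)>0$ is what makes the bottom term survive in case (ii).
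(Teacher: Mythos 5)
This proposition is imported verbatim from \cite[Theorem 1.1]{Coelho2012} and the paper itself gives no proof, so the only comparison available is with the cited source; your reconstruction is correct and follows essentially the same route as Coelho, de Mattos and dos Santos: the factorization $\pi_X^*=f_G^*\circ\pi_Y^*$ through $H^{k+1}(Y/G)=0$ kills the bottom-row term ${}^X\!E_\infty^{k+1,0}$, while the Volovikov-index bookkeeping shows no differential with $r\leqslant l$ or $r>k+1$ can touch $E^{k+1,0}$, so that in case (ii) this term equals $H^{k+1}(B_G)\ne 0$ and in case (i) the rank count over the PID (a surjection $M\to N$ forces $\operatorname{rank}N\leqslant\operatorname{rank}M$) rules out surjectivity of $d_{l+1}\colon E_{l+1}^{0,l}\to E_{l+1}^{l+1,0}$, exactly as you argue. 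The one point worth making explicit is the identification $H^*(Y_G)\cong H^*(Y/G)$ for an arbitrary compact Lie group $G$ rather than $G=\zz_2$ as in \Cref{Loringcor96}: a free action of a compact Lie group on a paracompact Hausdorff space makes $Y\to Y/G$ a principal bundle (Gleason), so $Y_G\to Y/G$ is a fibration with contractible fibre $E_G$ and the isomorphism follows in \v Cech cohomology by a Vietoris--Begle argument.
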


Using these Conner and Floyd indices, we get the following results.
\begin{proposition}\label{prop-SnX}  
Let $X\sim _2 \mathbb{R}P^m \times S^4$ be a finitistic space with a free involution and consider the antipodal involution on $S^n$. If $m=5$ or $m=7$, assume further that the action of $\zz_2$ on $H^*(X;\zz_2)$ is trivial or $X\sim_{\zz} \mathbb{R}P^m \times S^4$. Then the mod 2 co-index of $X$ can only take the values $C=1,4,m+2,m+3$ and $m+4$ and there are no $\zz_2$-equivariant maps $S^n\to X$ for $n \geqslant C+1$.
\end{proposition}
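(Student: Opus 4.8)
The plan is to read off the mod $2$ cohomology index directly from the algebras produced in \Cref{thm-Z2faRPmS4} and then invoke Conner--Floyd. First I would pin down the Whitney class $\omega$. Since $G=\zz_2$ acts freely, $X\to X/G$ is a principal $\zz_2$-bundle classified by a map $X/G\to B_G=\rp^\infty$; by \Cref{Loringcor96} the identification $H^*(X/G)\cong H^*(X_G)$ carries this classifying map to the projection $\pi\colon X_G\to B_G$, so that $\omega$ is sent to $\pi^*(t)=x$, the class of \eqref{eq-x=pi*t}. Consequently ${\rm co}\text{-}{\rm ind}_2(X)=\max\{n\mid x^n\neq 0\}$. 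The key observation is that $x^n=\pi^*(t^n)$ lies in the image of the edge homomorphism $H^n(B_G)=E_2^{n,0}\twoheadrightarrow E_\infty^{n,0}\subset H^n(X_G)$; since $t^n$ generates $E_2^{n,0}$, we have $x^n\neq 0$ if and only if $E_\infty^{n,0}\neq 0$. Thus the co-index equals the largest $n$ for which the bottom row $l=0$ of the limit page is nonzero.

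Next I would extract this number from each of the nine $E_\infty$-pages already computed in the proof of \Cref{thm-Z2faRPmS4}. In possibility $(1)$ one has $x^2=0$, so $C=1$; in possibility $(2)$ the relation $x^5=0$ with $x^4\neq 0$ gives $C=4$. For the remaining cases the bottom row was determined explicitly: possibilities $(3),(4),(6),(8)$ have $E_\infty^{k,0}=\zz_2$ for all $0\leqslant k\leqslant m+4$ and $E_\infty^{m+5,0}=0$, whence $C=m+4$; possibilities $(5)$ and $(7)$ have $E_\infty^{k,0}=\zz_2$ only up to $k=m+3$, whence $C=m+3$; and possibility $(9)$ has $E_\infty^{k,0}=\zz_2$ only up to $k=m+2$, whence $C=m+2$. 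Collecting these shows that the only values ${\rm co}\text{-}{\rm ind}_2(X)$ can take are $1,4,m+2,m+3,m+4$.

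Finally, I would apply \Cref{ConnerFloydindco}, which gives ${\rm ind}(X)\leqslant{\rm co}\text{-}{\rm ind}_2(X)=C$. By \Cref{ConnerFloyddefind} the existence of a $\zz_2$-equivariant map $S^n\to X$ forces $n\leqslant{\rm ind}(X)\leqslant C$; taking the contrapositive, no such map exists once $n\geqslant C+1$.

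The bulk of the argument is purely bookkeeping, since the spectral sequence computations have already been carried out in \Cref{thm-Z2faRPmS4}. The only genuine points requiring care are the identification $\omega\leftrightarrow x$ and the equivalence $x^n\neq 0\Leftrightarrow E_\infty^{n,0}\neq 0$; once these are in hand, the value of $C$ in each case is immediate from the stated limit pages, and the non-existence of equivariant maps follows formally from the Conner--Floyd inequality.
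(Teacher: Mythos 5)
Your proposal is correct and follows essentially the same route as the paper: identify the Whitney class $\omega$ with $x=\pi^*(t)$ via the classifying map of the principal $\zz_2$-bundle $X\to X/\zz_2$, read off the largest nonvanishing power of $x$ in each of the nine cases of \Cref{thm-Z2faRPmS4} (your edge-homomorphism criterion $x^n\neq 0\Leftrightarrow E_\infty^{n,0}\neq 0$ is exactly the mechanism the paper uses implicitly when it asserts, e.g., $x^{m+4}\neq 0$ and $x^{m+5}=0$), and conclude via the Conner--Floyd inequality ${\rm ind}(X)\leqslant{\rm co}\text{-}{\rm ind}_2(X)$.
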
  
\begin{proof}
For the principal $\zz_2$-bundle $X\to X/\zz_2$, we can take a classifying map
\[
f:X/\mathbb{Z}_2 \to B_{\mathbb{Z}_2}.
\]
It would uniquely determine a homotopy class of $[X/\zz_2,B_{\zz_2}]$. Let $\eta:X/\zz_2\to X_{\zz_2}$ is a homotopy inverse of the homotopy equivalence $h:X_{\zz_2}\to X/\zz_2$, then $\pi\eta:X/\zz_2\to B_{\zz_2}$ also classifies the principal $\zz_2$-bundle $X\to X/\zz_2$. Therefore, we find the following homotopy equivalence $f \simeq \pi\eta$. Consider the map
\[
\pi^*:H^1(B_{\mathbb{Z}_2})\to H^1(X_{\mathbb{Z}_2}).
\]
The characteristic class $t\in H^1(B_{\zz_2})$ of the universal bundle $\zz_2\hookrightarrow E_{\zz_2}\overset{\pi}\to B_{\zz_2}$ is mapped to $\pi^*(t)\in H^1(X_{\zz_2})\cong H^1(X/\zz_2)$, which is the Whitney class of the principal $\zz_2$-bundle $X\to X/\zz_2$. 

For $X\sim _2 \mathbb{R}P^m \times S^4$, by possibility (1) of \Cref{thm-Z2faRPmS4}, we see that $x\ne 0$ and $x^2=0$. Thus, co-ind$_2(X)=1$. By \Cref{ConnerFloydindco}, ind$(X)\leqslant 1$, this means that there is no $\zz_2$-equivariant map $S^n\to X$ for $n\geqslant 2$. 

In possibility (2) of \Cref{thm-Z2faRPmS4}, $x^4\ne 0$ and $x^5=0$. Accordingly, {\rm co-ind}$_2(X)=4$, ind$(X)\leqslant 4$ and there is no $\zz_2$-equivariant map $S^n\to X$ for $n\geqslant 5$. 

In possibilities (3), (4), (6) and (8) of \Cref{thm-Z2faRPmS4}, $x^{m+4}\ne 0$ and $x^{m+5}=0$. Accordingly, {\rm co-ind}$_2(X)=m+4$, ind$(X)\leqslant m+4$ and there is no $\zz_2$-equivariant map $S^n\to X$ for $n \geqslant m+5$. 

In possibilities (5) and (7) of \Cref{thm-Z2faRPmS4}, $x^{m+3}\ne 0$ and $x^{m+4}=0$. Therefore, {\rm co-ind}$_2(X)=m+3$, ind$(X)\leqslant m+3$ and there is no $\zz_2$-equivariant map $S^n\to X$ for $n \geqslant m+4$. 

Finally, in possibility (9) of \Cref{thm-Z2faRPmS4}, $x^{m+2}\ne 0$ and $x^{m+3}=0$. Thus, we have {\rm co-ind}$_2(X)=m+2$, ind$(X)\leqslant m+2$ and there is no $\zz_2$-equivariant map $S^n\to X$ for $n \geqslant m+3$. 
\end{proof}

By a similar proof, we get the following results for the $\zz_2$-equivariant maps from $S^n$ to $X\sim _2 \mathbb{C}P^m \times S^4$ or $X\sim_2\mathbb{H}P^m \times S^4$.
\begin{proposition}\label{prop-Sn-CPmS4} 
Let $X\sim _2 \mathbb{C}P^m \times S^4$ be a finitistic space with a free involution and consider the antipodal involution on $S^n$. If $m=3$, assume further that the action of $\zz_2$ on $H^*(X;\zz_2)$ is trivial or $X\sim _{\zz} \mathbb{C}P^3 \times S^4$. Then the mod 2 co-index of $X$ can only take the values $C=2,4$ and $2m+4$ and there are no $\zz_2$-equivariant maps $S^n\to X$ for $n \geqslant C+1$. 
\end{proposition}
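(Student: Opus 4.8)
The plan is to follow the proof of \Cref{prop-SnX} almost verbatim, since the only ingredient that changes is the list of candidate cohomology algebras supplied by \Cref{thm-Z2faCPmS4}. First I would set up the identification of the Whitney class. For the principal $\zz_2$-bundle $X\to X/\zz_2$, a classifying map $X/\zz_2\to B_{\zz_2}$ pulls back the universal class $t\in H^1(B_{\zz_2})$ to the Whitney class $\omega\in H^1(X/\zz_2)$. Composing with a homotopy inverse of the equivalence $h\colon X_{\zz_2}\to X/\zz_2$ and using $\pi^*\colon H^1(B_{\zz_2})\to H^1(X_{\zz_2})$, this identifies $\omega$ with the degree-one generator $x=\pi^*(t)$ of \eqref{eq-P21} under $H^1(X_{\zz_2})\cong H^1(X/\zz_2)$. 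Thus computing ${\rm co}\text{-}{\rm ind}_2(X)=\max\{n\mid \omega^n\ne 0\}$ reduces to reading off the nilpotency degree of $x$ in each possibility of \Cref{thm-Z2faCPmS4}.

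Next I would examine the three algebras case by case. In possibility $(1)$ the ideal $I_1$ contains $x^3$ but not $x^2$, so $x^2\ne 0=x^3$ and ${\rm co}\text{-}{\rm ind}_2(X)=2$. In possibility $(2)$ the ideal $I_2$ contains $x^5$ but not $x^4$, giving ${\rm co}\text{-}{\rm ind}_2(X)=4$. In possibility $(3)$ the lowest pure power of $x$ appearing among the generators of $I_3$ is $x^{2m+5}$; the remaining relations $y^{m+1}+\alpha_1 x^2y^m+\alpha_2 x^{2m+2}$ and $x^3y$ mix $x$ with $y$ and so cannot express any smaller power of $x$ alone. To make this last point rigorous I would invoke the additive structure \eqref{eq-cpms4caseii}, which gives $H^{2m+4}(X_G)=\zz_2$ and $H^{2m+5}(X_G)=0$; since $x^{k}$ corresponds to $t^{k}\otimes 1\in E_\infty^{k,0}$ and the bottom row satisfies $E_\infty^{2m+4,0}=\zz_2$, $E_\infty^{2m+5,0}=0$, one obtains $x^{2m+4}\ne 0=x^{2m+5}$, whence ${\rm co}\text{-}{\rm ind}_2(X)=2m+4$.

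Finally, in each case \Cref{ConnerFloydindco} yields ${\rm ind}(X)\le {\rm co}\text{-}{\rm ind}_2(X)=C$, so by \Cref{ConnerFloyddefind} no $\zz_2$-equivariant map $S^n\to X$ can exist once $n>C$, that is, for all $n\ge C+1$. The argument presents no genuine obstacle, being a direct corollary of \Cref{thm-Z2faCPmS4} together with the Conner--Floyd inequality; the only point requiring care is confirming in possibility $(3)$ that $x$ does not become nilpotent before degree $2m+5$, which is why I prefer to read $x^{2m+4}\ne 0$ off the bottom-row $E_\infty$-terms of the spectral sequence rather than manipulating the ideal $I_3$ directly.
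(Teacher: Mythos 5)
Your proposal is correct and matches the paper's intended argument: the paper proves \Cref{prop-SnX} by identifying the Whitney class with $x=\pi^*(t)$ and reading off the nilpotency degree of $x$ in each case of the classification theorem, then states \Cref{prop-Sn-CPmS4} follows ``by a similar proof,'' which is exactly what you carry out using the three possibilities of \Cref{thm-Z2faCPmS4}. Your extra care in possibility (3) --- confirming $x^{2m+4}\ne 0$ from $E_\infty^{2m+4,0}=\zz_2$ in the bottom row rather than from the ideal presentation alone --- is a sound (and slightly more careful) rendering of the same computation.
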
 
\begin{proposition}\label{prop-Sn-HPmS4} 
Let $X\sim _2 \mathbb{H}P^m \times S^4$ be a finitistic space with a free involution and consider the antipodal involution on $S^n$. When $m\equiv 3\pmod 4$, assume further that the action of $\zz_2$ on $H^*(X;\zz_2)$ is trivial or $X\sim _{\zz} \mathbb{H}P^m \times S^4$. Then the mod 2 co-index of $X$ can only take the value $4$ and there are no $\zz_2$-equivariant maps $S^n\to X$ for $n \geqslant 5$. 
\end{proposition}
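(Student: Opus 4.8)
The plan is to follow the same strategy as in the proof of \Cref{prop-SnX}, reducing the statement to a computation of the cup-length of the Whitney class inside the cohomology algebras supplied by \Cref{thm-Z2faHPmS4}. First I would recall, exactly as in \Cref{prop-SnX}, the identification of the relevant class: classifying the principal $\zz_2$-bundle $X\to X/\zz_2$ by a map $f\colon X/\zz_2\to B_{\zz_2}$ and using the homotopy equivalence $h\colon X_{\zz_2}\to X/\zz_2$, the universal characteristic class $t\in H^1(B_{\zz_2})$ is carried to $\pi^*(t)\in H^1(X_{\zz_2})\cong H^1(X/\zz_2)$, and this is precisely the Whitney class $\omega$ of the bundle. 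Under the isomorphism $H^*(X_{\zz_2})\cong H^*(X/\zz_2)$ this class is the degree-one generator $x$ appearing in \Cref{thm-Z2faHPmS4}, so that ${\rm co}\text{-}{\rm ind}_2(X)$ equals the largest integer $C$ with $x^C\ne 0$.

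Next I would read this cup-length directly off the two possible algebras listed in \Cref{thm-Z2faHPmS4}. In possibility (1) we have $H^*(X/G)=\zz_2[x,y,z]/I_1$ with $\deg x=1$, $\deg y=8$, $\deg z=4$; in possibility (2) we have $H^*(X/G)=\zz_2[x,y]/I_2$ with $\deg x=1$, $\deg y=4$. The crucial observation is that in \emph{both} ideals the only generator that is a pure power of $x$ is $x^5$, while every other generator sits in degree at least $8$: indeed $y^{\frac{m+1}{2}}+\beta x^4y^{\frac{m-1}{2}}z$ has degree $4(m+1)\geqslant 8$, the relation $z^2+\gamma y+\alpha x^4z$ has degree $8$, and $y^{m+1}$ has degree $4(m+1)\geqslant 8$. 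Since the lowest-degree generator is $x^5$ (degree $5$), neither $I_1$ nor $I_2$ contains a nonzero element in degrees $\leqslant 4$, whence $x^4\ne 0$, while $x^5=0$ holds in both cases. Thus $x$ has cup-length exactly $4$ in either algebra, giving ${\rm co}\text{-}{\rm ind}_2(X)=4$ for every free involution of the type considered.

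Finally I would invoke \Cref{ConnerFloydindco}, which yields ${\rm ind}(X)\leqslant {\rm co}\text{-}{\rm ind}_2(X)=4$; by \Cref{ConnerFloyddefind} this forces the nonexistence of a $\zz_2$-equivariant map $S^n\to X$ as soon as $n\geqslant 5$, which is the desired conclusion.

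I do not anticipate a serious obstacle here. The only point requiring care is verifying that $x^4\ne 0$, that is, that none of the mixed relations in $I_1$ or $I_2$ can be combined to produce $x^4$; this is immediate from the degree bookkeeping above, since every generator other than $x^5$ lives in degree $\geqslant 8$ and therefore cannot contribute in total degree $4$. The hypothesis $m\equiv 3\pmod 4$ (triviality of the $G$-action on $H^*(X;\zz_2)$ or the integral assumption $X\sim_{\zz}\hp^m\times S^4$) enters only through \Cref{thm-Z2faHPmS4}, where it guarantees that the orbit space realizes one of the two listed algebra structures; no additional work beyond citing that theorem is needed.
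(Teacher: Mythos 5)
Your proposal is correct and follows essentially the same route as the paper, which proves \Cref{prop-Sn-HPmS4} ``by a similar proof'' to \Cref{prop-SnX}: identify $x=\pi^*(t)$ with the Whitney class of $X\to X/\zz_2$, read off $x^4\ne 0$ and $x^5=0$ in both algebras of \Cref{thm-Z2faHPmS4}, and conclude via \Cref{ConnerFloydindco}. Your explicit degree bookkeeping confirming that no mixed relation in $I_1$ or $I_2$ can kill $x^4$ is a nice touch, but the argument is the paper's argument.
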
 

Note that, the index of $X\sim _2 \mathbb{F}P^m \times S^4$ (\Cref{ConnerFloyddefind}) can be no more than $m+4$, $2m+4$ and $4$, when $\ff=\rr$, $\cc$ or $\hh$ respectively. 

We get the following immediate consequences by the proof of \Cref{thm-Z2faRPmS4}, \Cref{thm-Z2faCPmS4} and \Cref{thm-Z2faHPmS4}.
\begin{proposition}\label{prop-iX} 
Let $\mathbb{Z}_2$ act freely on a finitistic space $X\sim _2 \mathbb{R}P^m \times S^4$. If $m=5$ or $m=7$, assume further that the action of $\zz_2$ on $H^*(X;\zz_2)$ is trivial or $X\sim_{\zz} \mathbb{R}P^m \times S^4$.  Then $i(X)$ has one of the following values: $2,~5,~m+3,~m+4~or~m+5$.
\end{proposition}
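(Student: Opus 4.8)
The plan is to obtain $i(X)$ directly from the spectral-sequence bookkeeping already performed in the proof of \Cref{thm-Z2faRPmS4}. By \Cref{VolovikovdefiX}, $i(X)$ is the least $r$ for which some differential $d_r\colon E_r^{k-r,r-1}\to E_r^{k,0}$ landing in the bottom row is nontrivial. So the whole task is to locate, in each of the nine scenarios of that proof, the first differential whose target lies on the line $l=0$, and to record its page number.

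A single structural observation organizes this. Since $t\otimes 1\in E_2^{1,0}$ is a permanent cocycle (see \eqref{eq-x=pi*t}), every differential is $t$-linear, so multiplication by $t$ carries differentials into the bottom row to differentials into the bottom row; moreover, by the edge homomorphism, $x^{j}=\pi^{*}(t^{j})$ represents $t^{j}\in E_\infty^{j,0}$, so $t^{j}$ survives to $E_\infty$ exactly when $x^{j}\neq 0$ in $H^{*}(X/G)$. Consequently the bottom edge $\mathbb{Z}_2[t]$ is truncated in one stroke: if $N$ denotes the nilpotency degree of $x$ (the least $N$ with $x^{N}=0$), then $t^{N}$ is the first bottom-row class to die, and, as one checks case by case, it is killed by a differential originating in column $0$, namely $d_{N}\colon E_{N}^{0,N-1}\to E_{N}^{N,0}$. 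This identifies $i(X)=N$.

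It then remains only to read $N$ off the presentations of $H^{*}(X/G)$ in \Cref{thm-Z2faRPmS4}: the smallest pure power of $x$ in the defining ideal is $x^{2}$ in possibility (1), $x^{5}$ in (2), $x^{m+5}$ in (3), (4), (6), (8), $x^{m+4}$ in (5), (7), and $x^{m+3}$ in (9); in each case the $E_\infty$-computation already recorded shows that the preceding power of $x$ is nonzero, so $N$ is genuinely the nilpotency degree. I would cross-check this against the proof itself by noting which transgression carries the top fibre class to the base: this is $d_2(1\otimes a)$ in Case (i), $d_5(1\otimes b)$ in Case (ii), and in the remaining multi-page scenarios the differentials $d_{m+3}$, $d_{m+4}$, or $d_{m+5}$ that the proof singles out as hitting $E_r^{*,0}$. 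Collecting the distinct outcomes yields $i(X)\in\{2,\,5,\,m+3,\,m+4,\,m+5\}$.

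The main obstacle is purely one of careful accounting in the long cases (iii)--(v): one must make sure that the earlier nontrivial differentials there (for instance $d_4(1\otimes b)=t^4\otimes a$, or $d_{m+3}\colon E_{m+3}^{0,m+3}\to E_{m+3}^{m+3,1}$) land on the lines $l=1,2$ rather than $l=0$, and hence do not contribute to $i(X)$, while all intermediate pages act trivially on the bottom row. The $t$-linearity of $d_r$ together with the fact that $1\otimes a$ is a permanent cocycle (so $d_r(t^k\otimes a^{s})=0$) rules out any hidden bottom-row differential before stage $N$, which is exactly what makes the identification $i(X)=N$ legitimate and the case reading routine.
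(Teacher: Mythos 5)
Your proposal is correct and matches the paper's intended argument: the paper proves \Cref{prop-iX} simply by reading off, in each of the cases (i)--(v) of the proof of \Cref{thm-Z2faRPmS4}, the first differential whose target lies on the bottom row ($d_2(1\otimes a)$, $d_5(1\otimes b)$, or the transgressions $d_{m+3}$, $d_{m+4}$, $d_{m+5}$ singled out there), which is exactly your cross-check. Your organizing observation that $i(X)$ equals the nilpotency degree $N$ of $x=\pi^*(t)$ --- justified by $t$-linearity of the differentials and the fact that $t^j$ with $j<r$ can only be hit on pages $\leqslant j$ --- is a clean restatement of the same bookkeeping (it is the relation $i(X)=\text{co-ind}_2(X)+1$ implicit in comparing \Cref{prop-SnX} with \Cref{prop-iX}), not a different route.
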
	
\begin{proposition}\label{prop-iX-CPmS4} 
Let $\mathbb{Z}_2$ act freely on a finitistic space $X\sim _2 \mathbb{C}P^m \times S^4$. If $m=3$, assume further that the action of $\zz_2$ on $H^*(X;\zz_2)$ is trivial or $X\sim _{\zz} \mathbb{C}P^3 \times S^4$. Then $i(X)$ has one of the following values: $3,~5~or~2m+5$.
\end{proposition}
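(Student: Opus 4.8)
The plan is to read off the Volovikov index $i(X)$ directly from the three-case analysis carried out in the proof of \Cref{thm-Z2faCPmS4}, since by \Cref{VolovikovdefiX} the quantity $i(X)$ is determined entirely by the transgression-type differentials $d_r\colon E_r^{k-r,r-1}\to E_r^{k,0}$ landing in the bottom row $l=0$. First I would record the parity reduction already used there: because $X\sim_2\cp^m\times S^4$ has $H^l(X)=0$ for all odd $l$, one has $E_2^{*,*}=E_3^{*,*}$ and $d_r=0$ for every even $r$. Consequently the only differentials that can reach the bottom row are the odd ones $d_r$ with $r\geqslant 3$, so automatically $i(X)\geqslant 3$.

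Next I would match each of the cases (i)--(iii) of that proof to one value of $i(X)$. In \textbf{Case(i)} the first differential $d_3(1\otimes a)=t^3\otimes 1$ is already nonzero; here $1\otimes a\in E_3^{0,2}$ and $t^3\otimes 1\in E_3^{3,0}$, so this is precisely a nontrivial $d_3\colon E_3^{3-3,\,3-1}\to E_3^{3,0}$, giving $i(X)=3$. In \textbf{Case(ii)} one has $d_3=0$ identically, so nothing hits $l=0$ at the third page, while $d_5(1\otimes b)=t^5\otimes 1$ is the first transgression into the bottom row, i.e. a nonzero $d_5\colon E_5^{0,4}\to E_5^{5,0}$, giving $i(X)=5$. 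In \textbf{Case(iii)} the differential $d_3(1\otimes b)=t^3\otimes a$ lands in $E_3^{3,2}$, not in the bottom row, and since $1\otimes a$ (hence every $t^k\otimes a^j$) is a permanent cocycle, no odd $d_r$ with $3\leqslant r\leqslant 2m+3$ maps nontrivially into $E_r^{*,0}$; the computation of that case shows the bottom row is first disturbed by the nontrivial $d_{2m+5}\colon E_{2m+5}^{0,2m+4}\to E_{2m+5}^{2m+5,0}$, whence $i(X)=2m+5$.

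The step needing the most care is the lower bound in \textbf{Case(iii)}, namely confirming that none of the intermediate differentials $d_5,d_7,\dots,d_{2m+3}$ touches $l=0$. This is exactly the content of the vanishing $E_5^{*,*}=E_4^{*,*}$ together with the survival of the generators up to $E_{2m+5}$ that is already established in the proof of \Cref{thm-Z2faCPmS4}, Case(iii); in particular every surviving class in a bidegree $(k-r,r-1)$ with $r-1$ even is a polynomial in the permanent cocycles $1\otimes a^j$ and therefore carries zero differential into the bottom row. Since the three cases are mutually exclusive and exhaustive by the classification of nontrivial differentials on the generators $1\otimes a$ and $1\otimes b$, these are the only possibilities and $i(X)\in\{3,\,5,\,2m+5\}$, completing the proof.
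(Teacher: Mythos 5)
Your proposal is correct and follows essentially the same route as the paper: the paper derives \Cref{prop-iX-CPmS4} as an immediate consequence of the case analysis in the proof of \Cref{thm-Z2faCPmS4}, with Case (i) giving the nontrivial $d_3\colon E_3^{0,2}\to E_3^{3,0}$ (so $i(X)=3$), Case (ii) the nontrivial $d_5\colon E_5^{0,4}\to E_5^{5,0}$ (so $i(X)=5$), and Case (iii) the nontrivial $d_{2m+5}\colon E_{2m+5}^{0,2m+4}\to E_{2m+5}^{2m+5,0}$ (so $i(X)=2m+5$). Your extra care in Case (iii) — checking that $d_3(1\otimes b)=t^3\otimes a$ misses the bottom row and that the classes $t^k\otimes a^j$ are permanent cocycles so no $d_r$ with $3\leqslant r\leqslant 2m+4$ reaches $l=0$ — correctly fills in the detail the paper leaves implicit.
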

\begin{proposition}\label{prop-iX-HPmS4} 
Let $\mathbb{Z}_2$ act freely on a finitistic space $X\sim _2 \mathbb{H}P^m \times S^4$. When $m\equiv 3\pmod 4$, assume further that the action of $\zz_2$ on $H^*(X;\zz_2)$ is trivial or $X\sim _{\zz} \mathbb{H}P^m \times S^4$. Then $i(X)=5$.
\end{proposition}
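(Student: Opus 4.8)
The plan is to read $i(X)$ directly off the spectral sequence already analyzed in the proof of \Cref{thm-Z2faHPmS4}, since Volovikov's index (\Cref{VolovikovdefiX}) is defined purely as the smallest $r$ for which some differential $d_r\colon E_r^{k-r,r-1}\to E_r^{k,0}$ into the base row is nontrivial. The entire argument thus reduces to locating the first such differential, and I would do this by proving the two inequalities $i(X)\ge 5$ and $i(X)\le 5$ separately.

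First I would establish the lower bound $i(X)\ge 5$. Recall from the proof of \Cref{thm-Z2faHPmS4} that $H^l(X)\ne 0$ only when $4\mid l$, so $E_2^{k,l}=H^k(B_G)\otimes H^l(X)=0$ for $l\not\equiv 0\pmod 4$, whence $d_r=0$ for $2\le r\le 4$ and $E_2^{*,*}=E_5^{*,*}$. For the specific bidegree appearing in \Cref{VolovikovdefiX}, the source $E_r^{k-r,r-1}$ has second index $r-1\in\{1,2,3\}$ when $r\in\{2,3,4\}$; since $H^{r-1}(X)=0$ in each of these cases, the source group already vanishes on the $E_2=E_r$ page. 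Hence no differential $d_r\colon E_r^{k-r,r-1}\to E_r^{k,0}$ can be nontrivial for $r\le 4$, giving $i(X)\ge 5$.

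Next I would establish $i(X)\le 5$ by exhibiting a nonzero $d_5$ landing on the base row. Taking $k=5$, the relevant map is $d_5\colon E_5^{0,4}\to E_5^{5,0}$, where $E_5^{5,0}=\zz_2\langle t^5\otimes 1\rangle$ and $E_5^{0,4}$ is generated by $1\otimes a$ and $1\otimes b$. In each of the three cases of the proof of \Cref{thm-Z2faHPmS4}, at least one generator maps nontrivially: $d_5(1\otimes a)=t^5\otimes 1$ in Cases~(i) and~(ii), while $d_5(1\otimes b)=t^5\otimes 1$ in Cases~(i) and~(iii). Thus $d_5\colon E_5^{0,4}\to E_5^{5,0}$ is nonzero regardless of which case occurs, so $i(X)\le 5$, and combining the bounds yields $i(X)=5$.

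The argument has essentially no hard step, since the required differentials are already computed; the only points demanding care are bookkeeping ones. I would verify that the nontrivial $d_5$ found in the proof of \Cref{thm-Z2faHPmS4} lands precisely in the bidegree $E_5^{k-r,r-1}\to E_5^{k,0}$ prescribed by \Cref{VolovikovdefiX} (namely $k=r=5$, with second index $r-1=4$), and that for $r=2,3,4$ the source groups genuinely vanish rather than merely surviving as permanent cocycles. Both follow at once from the divisibility constraint $4\mid l$ on $H^l(X)$, so the main obstacle is purely one of careful index matching rather than any new computation.
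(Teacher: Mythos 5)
Your proposal is correct and matches the paper's (implicit) argument: the paper states this proposition as an immediate consequence of the proof of \Cref{thm-Z2faHPmS4}, where $E_2^{*,*}=E_5^{*,*}$ forces all $d_r$ with $r\leqslant 4$ to vanish, and in each of the three cases the first nontrivial differential is $d_5\colon E_5^{0,4}\to E_5^{5,0}$ on $1\otimes a$ or $1\otimes b$, giving $i(X)=5$ exactly as you argue. Your explicit split into the bounds $i(X)\geqslant 5$ (vanishing of the source rows $l=1,2,3$) and $i(X)\leqslant 5$ (case check) is just a careful write-up of the same reading-off, with the index bookkeeping of \Cref{VolovikovdefiX} done correctly.
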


By \Cref{Coelhothm11} and \Cref{prop-iX}, we obtain
\begin{proposition}\label{prop-XY} 
Suppose that $\mathbb{Z}_2$ acts freely on a finitistic space $X\sim _2 \mathbb{R}P^m \times S^4$ and path-connected, paracompact Hausdorff space $Y$. If $m=5$ or $m=7$, assume further that the action of $\zz_2$ on $H^*(X;\zz_2)$ is trivial or $X\sim_{\zz} \mathbb{R}P^m \times S^4$. Then there is no $\zz_2$-equivariant map $X\to Y$
\begin{enumerate}[{\rm (a)}]\itemindent=2em
	\item If $i(X)=5$ and $H^k(Y/\zz_2)=0$ for some $2\leqslant k<5$;
	\item If $i(X)=m+3$ and $H^k(Y/\zz_2)=0$ for some $2\leqslant k<m+3$;
	\item If $i(X)=m+4$ and $H^k(Y/\zz_2)=0$ for some $2\leqslant k<m+4$;
	\item If $i(X)=m+5$ and $H^k(Y/\zz_2)=0$ for some $2\leqslant k<m+5$.
\end{enumerate}
\end{proposition}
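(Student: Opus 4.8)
The plan is to deduce all four statements from the Coelho--Mattos--Santos criterion (\Cref{Coelhothm11}) applied with $G=\zz_2$ and $\zz_2$-coefficients, taking $l=i(X)-1$ in each case. The whole argument hinges on one structural feature of the classifying space: since $B_{\zz_2}=\rp^\infty$ has $H^j(B_{\zz_2};\zz_2)=\zz_2$ in every degree, every mod $2$ Betti number satisfies $\beta_j(B_{\zz_2})=1>0$. Consequently the positivity hypothesis $0<\beta_{k+1}(B_G)$ in part (ii) of \Cref{Coelhothm11} is automatic, and the only thing one must arrange is to land in the regime $1\le k<l$ of that part (ii) rather than in the boundary case $k=l$ of part (i).

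First I would check the ambient hypotheses of \Cref{Coelhothm11}. Both $X$ and $Y$ carry free $\zz_2$-actions by assumption; $X$ is finitistic, hence paracompact Hausdorff, and connected since $H^0(X)=\zz_2$, and we take it path-connected as in the standing setting; $Y$ is assumed path-connected and paracompact Hausdorff. With $\zz_2$ as coefficient ring (a field, hence a PID) the Betti numbers above are the $\zz_2$-dimensions of the cohomology, and the Volovikov index $i(X)$ takes one of the values supplied by \Cref{prop-iX}.

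Next I would treat the four cases uniformly. In case (a) set $l=4$; in (b), (c), (d) set $l=m+2$, $m+3$, $m+4$ respectively, so that $l=i(X)-1$ and the standing requirement $i(X)\ge l+1$ holds with equality. The vanishing hypothesis is $H^{k}(Y/\zz_2)=0$ for some $k$ with $2\le k<i(X)$. Matching this against the statement $H^{k'+1}(Y/G)=0$ of \Cref{Coelhothm11} via $k'=k-1$, the bound $2\le k<i(X)=l+1$ becomes $1\le k'\le l-1$, i.e. $1\le k'<l$. Thus we are exactly in part (ii); since $\beta_{k'+1}(B_{\zz_2})=1>0$, \Cref{Coelhothm11}(ii) yields that there is no $\zz_2$-equivariant map $X\to Y$, which proves each of (a)--(d).

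I expect the only delicate point to be the bookkeeping that forces part (ii) rather than part (i): it is precisely the strict inequality $k<i(X)$ in the hypotheses that keeps Coelho's index $k'=k-1$ strictly below $l$. This is also what explains the shape of the statement --- the value $i(X)=2$ from \Cref{prop-iX} is omitted because there $l=1$ leaves no room for $1\le k'<l$, and part (i) would instead demand $\beta_1(X)<\beta_2(B_{\zz_2})=1$, which fails since $\beta_1(X)=1$. No genuine spectral-sequence computation is needed beyond the value of $i(X)$ already furnished by \Cref{prop-iX}.
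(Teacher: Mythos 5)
Your proof is correct and takes essentially the same route as the paper: both apply \Cref{Coelhothm11} with the observation that $\beta_j(B_{\zz_2};\zz_2)=1$ for all $j$, together with the values of $i(X)$ supplied by \Cref{prop-iX}. Your write-up merely makes explicit the bookkeeping the paper leaves implicit --- choosing $l=i(X)-1$, the index shift $k'=k-1$ placing the hypothesis squarely in part (ii) of \Cref{Coelhothm11}, and the reason the value $i(X)=2$ is omitted from the statement.
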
	
\begin{proof}
We observe that $\beta _l(B_{\zz_2};\zz_2)=1$ for all $l$. By \Cref{prop-iX},  $i(X)$ is one of 2, 5, $m+3$, $m+4$ or $m+5$. We can apply these results to \Cref{Coelhothm11}. If $i(X)=5, m+3, m+4\text{~or~}m+5$, then we get the possibilities (a), (b), (c) or (d), respectively. 
\end{proof}

For the same reason, we obtain the following propositions directly.
\begin{proposition}\label{prop-XY-CPmSn} 
Suppose that $\mathbb{Z}_2$ acts freely on a finitistic space $X\sim _2 \mathbb{C}P^m \times S^4$ and path-connected, paracompact Hausdorff space $Y$. If $m=3$, assume further that the action of $\zz_2$ on $H^*(X;\zz_2)$ is trivial or $X\sim _{\zz} \mathbb{C}P^3 \times S^4$. Then there is no $\zz_2$-equivariant map $X\to Y$
\begin{enumerate}[{\rm (a)}]\itemindent=2em
	\item If $i(X)=3$ and $H^k(Y/\zz_2)=0$ for $k=2$;
	\item If $i(X)=5$ and $H^k(Y/\zz_2)=0$ for some $2\leqslant k<5$;
	\item If $i(X)=2m+5$ and $H^k(Y/\zz_2)=0$ for some $2\leqslant k<2m+5$.
\end{enumerate}
\end{proposition}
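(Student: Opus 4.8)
The plan is to mirror the argument already carried out for \Cref{prop-XY}, the three cases here differing from the real case only in which values the Volovikov index is allowed to take. First I would record the elementary numerical input that the mod $2$ Betti numbers of the classifying space satisfy $\beta_l(B_{\zz_2};\zz_2)=1$ for every $l\geqslant 0$; this is immediate from $B_{\zz_2}=\rp^\infty$ and $H^*(B_{\zz_2};\zz_2)=\zz_2[t]$ having a single generator in each degree. This is precisely the positivity of Betti numbers demanded by the criterion of Coelho, Mattos and Santos (\Cref{Coelhothm11}).

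Next I would feed in \Cref{prop-iX-CPmS4}, which under the stated hypotheses restricts $i(X)$ to the three values $3$, $5$ and $2m+5$. For each of these I set $l=i(X)-1$, so that the standing hypothesis $i(X)\geqslant l+1$ of \Cref{Coelhothm11} holds with equality and $l\geqslant 1$ throughout.

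The core of the argument is then a direct application of part (ii) of \Cref{Coelhothm11}. In each of (a), (b), (c) the vanishing hypothesis reads $H^k(Y/\zz_2)=0$ for some $k$ with $2\leqslant k<i(X)$; rewriting this as $H^{(k-1)+1}(Y/\zz_2)=0$ places us in the setting of \Cref{Coelhothm11} with internal index $k-1$ satisfying $1\leqslant k-1<l$. Since $\beta_k(B_{\zz_2};\zz_2)=1>0$, part (ii) applies verbatim and produces the asserted non-existence of a $\zz_2$-equivariant map $X\to Y$.

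The only genuine obstacle is bookkeeping with the two indexing conventions, together with the check that we always land strictly inside the range $k-1<l$, so that part (ii) — rather than part (i), whose hypothesis $\beta_l(X)<\beta_{l+1}(B_G)$ I have not verified — is the relevant alternative. This strictness is automatic: the ranges are stated with $k<i(X)$, whence $k-1\leqslant i(X)-2=l-1<l$. No geometric content beyond \Cref{prop-iX-CPmS4}, \Cref{Coelhothm11}, and the Betti numbers of $B_{\zz_2}$ enters the argument.
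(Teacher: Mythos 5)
Your proposal is correct and follows essentially the same route as the paper, which disposes of this proposition by the argument of \Cref{prop-XY}: observe $\beta_l(B_{\zz_2};\zz_2)=1$ for all $l$, restrict $i(X)$ to $3$, $5$, $2m+5$ via \Cref{prop-iX-CPmS4}, and apply \Cref{Coelhothm11}. Your explicit bookkeeping — setting $l=i(X)-1$, shifting the degree to Coelho's index $k-1$, and checking $1\leqslant k-1<l$ so that only part (ii) is ever invoked — is precisely the verification the paper leaves implicit in ``For the same reason, we obtain the following propositions directly.''
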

\begin{proposition}\label{prop-XY-HPmS4} 
Suppose that $\mathbb{Z}_2$ acts freely on a finitistic space $X\sim _2 \mathbb{H}P^m \times S^4$ and path-connected, paracompact Hausdorff space $Y$. When $m\equiv 3\pmod 4$, assume further that the action of $\zz_2$ on $H^*(X;\zz_2)$ is trivial or $X\sim _{\zz} \mathbb{H}P^m \times S^4$. If $i(X)=5$ and $H^k(Y/\zz_2)=0$ for some $2\leqslant k<5$, then there is no $\zz_2$-equivariant map $X\to Y$.
\end{proposition}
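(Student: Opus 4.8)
The plan is to deduce this statement as an immediate consequence of the Coelho--Mattos--Santos nonexistence criterion (\Cref{Coelhothm11}) together with the index computation $i(X)=5$ established in \Cref{prop-iX-HPmS4}. The hypotheses ``$i(X)=5$'' and ``$H^k(Y/\zz_2)=0$ for some $2\leqslant k<5$'' are tailored exactly so that \Cref{Coelhothm11} applies, and the argument will run parallel to part (a) of \Cref{prop-XY}.

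First I would record that, under the stated assumptions on the $\zz_2$-action (including the extra hypothesis when $m\equiv 3\pmod 4$ that the action on $H^*(X;\zz_2)$ is trivial or $X\sim_{\zz}\hp^m\times S^4$), \Cref{prop-iX-HPmS4} gives $i(X)=5$. Setting $l=4$, this reads $i(X)=5\geqslant l+1$, so the index hypothesis of \Cref{Coelhothm11} is met. Next I would match the vanishing condition: the hypothesis furnishes an integer $k$ with $2\leqslant k\leqslant 4$ and $H^k(Y/\zz_2)=0$. Writing $k=k'+1$ with $1\leqslant k'\leqslant 3$ puts this in the form $H^{k'+1}(Y/\zz_2)=0$ required by \Cref{Coelhothm11}, and since $k'\leqslant 3<4=l$ we fall squarely into case (ii) of that proposition.

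To apply case (ii) it remains only to check the Betti-number condition $0<\beta_{k'+1}(B_{\zz_2})$. Since $H^*(B_{\zz_2};\zz_2)=\zz_2[t]$ with $\deg t=1$, every mod $2$ Betti number of $B_{\zz_2}$ equals $1$; in particular $\beta_{k'+1}(B_{\zz_2})=1>0$. With all hypotheses of case (ii) verified, \Cref{Coelhothm11} then yields that there is no $\zz_2$-equivariant map $X\to Y$, as claimed.

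The argument is essentially bookkeeping, so there is no serious obstacle; the one point demanding care is the index shift between the exponent $k$ appearing in this statement and the index $k'=k-1$ appearing in \Cref{Coelhothm11}. Keeping track of this shift also confirms that, because $k<5$ forces $k'<l$, we always land in case (ii) and never need the stronger inequality $\beta_l(X)<\beta_{l+1}(B_{\zz_2})$ required by case (i); this is what makes the conclusion independent of the Betti numbers of $X$.
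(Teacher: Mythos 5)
Your proposal is correct and coincides with the paper's own argument: the paper proves this proposition ``directly'' by the same route as \Cref{prop-XY}, namely invoking \Cref{prop-iX-HPmS4} for $i(X)=5$, taking $l=4$ in \Cref{Coelhothm11}, observing $\beta_j(B_{\zz_2};\zz_2)=1$ for all $j$, and landing in case (ii). Your explicit bookkeeping of the index shift $k=k'+1$ (which guarantees $k'<l$, so case (i) is never needed) is just a more careful writing-out of what the paper leaves implicit.
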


Replacing $Y$ in the above by $S^n$, we obtain the following results.
\begin{corollary}\label{coro-XSn} 
Let $X\sim _2 \mathbb{R}P^m \times S^4$ be a finitistic space and the unit $n$-sphere $S^n$ be equipped with a free involution. If $m=5$ or $m=7$, assume further that the action of $\zz_2$ on $H^*(X;\zz_2)$ is trivial or $X\sim_{\zz} \mathbb{R}P^m \times S^4$. Then, there is no $\zz_2$-equivariant map $X\to S^n$
\begin{enumerate}[{\rm (a)}]\itemindent=2em
	\item If $i(X)=5$ and $n<4$;
	\item If $i(X)=m+3$ and $n<m+2$;
	\item If $i(X)=m+4$ and $n<m+3$;
	\item If $i(X)=m+5$ and $n<m+4$.
\end{enumerate}
\end{corollary}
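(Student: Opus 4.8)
The plan is to derive this corollary by specializing \Cref{prop-XY} to the target $Y=S^n$. The first step is to record the cohomology of the quotient $S^n/\zz_2$. Since the involution on $S^n$ is free and $H^i(S^n)=0$ for $i>n$, \Cref{Bredonthm15} gives $H^i((S^n)_{\zz_2})\cong H^i((S^n)^{\zz_2})$ for $i>n$; the fixed set $(S^n)^{\zz_2}$ is empty, so these groups vanish. Combining this with \Cref{Loringcor96}, which identifies $H^*((S^n)_{\zz_2})$ with $H^*(S^n/\zz_2)$, I conclude
\[
H^k(S^n/\zz_2;\zz_2)=0 \quad\text{for all } k>n.
\]
Note that this holds for any free involution on $S^n$, not merely the antipodal one, so the conclusion does not depend on a particular choice of involution on the sphere.

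Next I would match each listed value of $i(X)$ against the hypotheses of \Cref{prop-XY}. Each part there asks for an integer $k$ with $2\leqslant k<i(X)$ and $H^k(Y/\zz_2)=0$. With $Y=S^n$, the displayed vanishing lets me take $k=i(X)-1$, for which $H^{\,i(X)-1}(S^n/\zz_2)=0$ precisely when $i(X)-1>n$, i.e.\ $n<i(X)-1$. One checks $k=i(X)-1\geqslant 2$ in every case, since $i(X)=5$ in case \textup{(a)} and $i(X)\geqslant m+3$ in cases \textup{(b)}--\textup{(d)}. Thus, for $i(X)=5$ and $n<4$ take $k=4$; for $i(X)=m+3$ and $n<m+2$ take $k=m+2$; for $i(X)=m+4$ and $n<m+3$ take $k=m+3$; and for $i(X)=m+5$ and $n<m+4$ take $k=m+4$. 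In each case the corresponding part of \Cref{prop-XY} then yields the nonexistence of a $\zz_2$-equivariant map $X\to S^n$.

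There is essentially no obstacle here beyond bookkeeping: the analytic content is already packaged in \Cref{prop-XY} (which rests on \Cref{Coelhothm11} and \Cref{prop-iX}), and the corollary is simply its evaluation at the concrete target $S^n$. The only point needing a moment's care is verifying that $S^n$ (with $n\geqslant 1$, being path-connected, paracompact, Hausdorff, and carrying a free $\zz_2$-action) is an admissible target for \Cref{prop-XY}, and that the chosen exponent $k=i(X)-1$ lies in the required range $[\,2,\,i(X)\,)$ while satisfying $k>n$ exactly under the stated inequality on $n$. Both are immediate from the displayed vanishing and the admissible values of $i(X)$ recorded in \Cref{prop-iX}.
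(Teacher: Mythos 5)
Your proposal is correct and follows the paper's own route: the paper derives \Cref{coro-XSn} simply by substituting $Y=S^n$ into \Cref{prop-XY}, exactly as you do, with the choice $k=i(X)-1$ verified against the stated bounds on $n$. Your extra justification that $H^k(S^n/\zz_2;\zz_2)=0$ for $k>n$ via \Cref{Bredonthm15} and \Cref{Loringcor96} (valid for an arbitrary free involution on $S^n$, not just the antipodal one) is a sound spelling-out of a step the paper leaves implicit.
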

\begin{corollary}\label{coro-XSn-CPmS4} 
Let $X\sim _2 \mathbb{C}P^m \times S^4$ be a finitistic space and the unit $n$-sphere $S^n$ be equipped with a free involution. If $m=3$, assume further that the action of $\zz_2$ on $H^*(X;\zz_2)$ is trivial or $X\sim _{\zz} \mathbb{C}P^3 \times S^4$. Then, there is no $\zz_2$-equivariant map $X\to S^n$
\begin{enumerate}[{\rm (a)}]\itemindent=2em
	\item If $i(X)=3$ and $n<2$;
	\item If $i(X)=5$ and $n<4$;
	\item If $i(X)=2m+5$ and $n<2m+4$.
\end{enumerate}
\end{corollary}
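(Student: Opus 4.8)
The plan is to obtain \Cref{coro-XSn-CPmS4} as the special case $Y=S^n$ of \Cref{prop-XY-CPmSn}. The hypotheses placed on $X$ and on the Volovikov index $i(X)$ are verbatim those of \Cref{prop-XY-CPmSn}, so the only genuinely new ingredient is the mod $2$ cohomology of the orbit space $S^n/\zz_2$, which lets us convert each abstract vanishing hypothesis ``$H^k(Y/\zz_2)=0$'' into an explicit inequality on $n$. I would also note at the outset that $S^n$ with its free involution is path-connected, paracompact and Hausdorff, so the structural hypotheses of \Cref{prop-XY-CPmSn} are satisfied.

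First I would compute $H^*(S^n/\zz_2)$. Since $\zz_2$ acts freely on $S^n$, the Borel construction gives $H^*(S^n_{\zz_2})\cong H^*(S^n/\zz_2)$ (cf.\ \Cref{Loringcor96}), and in the Leray--Serre spectral sequence of $S^n\hookrightarrow S^n_{\zz_2}\to B_{\zz_2}$ one has $E_2^{k,l}=H^k(B_{\zz_2})\otimes H^l(S^n)$ with $H^l(S^n)=\zz_2$ only for $l=0,n$. The single possible nontrivial differential is $d_{n+1}\colon E_{n+1}^{0,n}\to E_{n+1}^{n+1,0}$; as the action is free, \Cref{Bredonthm15} forces $d_{n+1}$ to be an isomorphism (otherwise $H^i(S^n/\zz_2)\neq 0$ for infinitely many $i$). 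Hence $S^n/\zz_2\sim_2\rp^n$ and
\[
H^k(S^n/\zz_2)\cong
\begin{cases}
\zz_2, & 0\leqslant k\leqslant n,\\
0, & k>n,
\end{cases}
\]
so that $H^k(S^n/\zz_2)=0$ precisely when $k\geqslant n+1$. (For the antipodal involution this is literally $\rp^n$.)

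Next I would match each clause of \Cref{prop-XY-CPmSn} against this vanishing range. For (a), the required condition $H^2(S^n/\zz_2)=0$ holds iff $2>n$, i.e.\ $n<2$. For (b), we need $H^k(S^n/\zz_2)=0$ for \emph{some} $k$ with $2\leqslant k<5$; the weakest such requirement comes from the largest admissible $k=4$, and $H^4(S^n/\zz_2)=0$ iff $n<4$. For (c), we need $H^k(S^n/\zz_2)=0$ for some $k$ with $2\leqslant k<2m+5$; choosing the largest value $k=2m+4$ gives the weakest requirement $n<2m+4$. Feeding these three inequalities back into \Cref{prop-XY-CPmSn} reproduces exactly clauses (a), (b), (c), and hence rules out a $\zz_2$-equivariant map $X\to S^n$ in each case.

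The argument is a bookkeeping specialization, so there is no substantive obstacle. The only point deserving care is, in cases (b) and (c), to select the \emph{largest} $k$ in the allowed range so that the vanishing condition $H^k(S^n/\zz_2)=0$ is as weak as possible, thereby yielding the sharpest inequality on $n$; a smaller choice of $k$ would produce a correct but strictly weaker statement.
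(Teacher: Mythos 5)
Your proposal is correct and is essentially the paper's own argument: the paper obtains \Cref{coro-XSn-CPmS4} simply by substituting $Y=S^n$ into \Cref{prop-XY-CPmSn}, exactly as you do. Your computation that any free involution on $S^n$ yields $S^n/\zz_2\sim_2\rp^n$ (so that $H^k(S^n/\zz_2)=0$ iff $k>n$), and your choice of the largest admissible $k$ in clauses (b) and (c), just makes explicit the bookkeeping the paper leaves implicit.
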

\begin{corollary}\label{coro-XSn-HPmS4} 
Let $X\sim _2 \mathbb{H}P^m \times S^4$ be a finitistic space and the unit $n$-sphere $S^n$ be equipped with a free involution. When $m\equiv 3\pmod 4$, assume further that the action of $\zz_2$ on $H^*(X;\zz_2)$ is trivial or $X\sim _{\zz} \mathbb{H}P^m \times S^4$. If $i(X)=5$ and $n<4$, then there is no $\zz_2$-equivariant map $X\to S^n$.
\end{corollary}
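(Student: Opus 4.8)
The plan is to obtain this corollary as a direct specialization of \Cref{prop-XY-HPmS4}, taking the target space $Y$ to be the sphere $S^n$ with its given free involution. First I would check that $S^n$ satisfies the standing hypotheses imposed on $Y$ in \Cref{prop-XY-HPmS4}: for $n\geqslant 1$ the sphere $S^n$ is path-connected, and being compact Hausdorff it is paracompact. Thus $\zz_2$ acts freely on the finitistic space $X\sim_2\hp^m\times S^4$ and on the path-connected paracompact Hausdorff space $Y=S^n$, and the supplementary hypothesis in the case $m\equiv 3\pmod 4$ (trivial action on $H^*(X;\zz_2)$, or $X\sim_{\zz}\hp^m\times S^4$) is transported verbatim from the corollary to the proposition.

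The one genuine ingredient is the mod $2$ cohomology of the orbit space $S^n/\zz_2$. Since the involution on $S^n$ is free, its fixed point set is empty, so by Smith theory (equivalently, via the Gysin sequence of the double cover $S^n\to S^n/\zz_2$, or from the Borel spectral sequence together with \Cref{Bredonthm15}) one has
\[
H^*(S^n/\zz_2)\cong \zz_2[w]/\langle w^{n+1}\rangle,\quad \deg w=1.
\]
In particular $H^k(S^n/\zz_2)=0$ for every $k>n$. With this computed, the conclusion follows by a single choice of index: under the hypothesis $n<4$, the integer $k=4$ lies in the required range $2\leqslant k<5$ and, because $4>n$, we have $H^4(S^n/\zz_2)=0$. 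Combining this with the assumption $i(X)=5$, the hypotheses of \Cref{prop-XY-HPmS4} are met with $Y=S^n$, and the proposition rules out any $\zz_2$-equivariant map $X\to S^n$.

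I do not expect any real obstacle here: the substantive work—the computation $i(X)=5$ in \Cref{prop-iX-HPmS4}, and the passage from an index bound to non-existence of equivariant maps through \Cref{Coelhothm11}—has already been packaged into \Cref{prop-XY-HPmS4}. The only subtlety worth flagging is that a valid $k$ with $2\leqslant k<5$ and $H^k(S^n/\zz_2)=0$ exists precisely when $n<4$, since for $n\geqslant 4$ one has $H^k(S^n/\zz_2)=\zz_2\neq 0$ for all $k\leqslant 4$; this is exactly why the hypothesis $n<4$ is the sharp threshold compatible with the value $i(X)=5$.
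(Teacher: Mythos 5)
Your proof is correct and takes essentially the same route as the paper: the paper obtains \Cref{coro-XSn-HPmS4} precisely by substituting $Y=S^n$ into \Cref{prop-XY-HPmS4}, with the standard computation $H^*(S^n/\zz_2)\cong\zz_2[w]/\langle w^{n+1}\rangle$ (valid for any free involution on $S^n$) supplying the vanishing $H^4(S^n/\zz_2)=0$ when $n<4$, exactly as you argue. Your remarks on the sharpness of the threshold $n<4$ merely make explicit what the paper leaves implicit.
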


\paragraph{\textbf{Acknowledgments}}  The authors would like to thank the anonymous referee who provided many constructive, useful and detailed comments on the original manuscript, which led to significant improvements in  the presentation of this paper.


\begin{thebibliography}{00}
\bibitem{Borel1960} A.~Borel, \emph{Seminar on Transformation Groups}. With contributions by G. Bredon, E. E. Floyd, D. Montgomery and R. Palais. Ann. Math. Stud., vol. 46, Princeton University Press, Princeton, NJ, 1960.
\bibitem{Bredon1972} G.~E. Bredon, \emph{Introduction to Compact Transformation Groups}. Pure and Applied Mathematics, 46, Academic Press, New York-London, 1972.
\bibitem{Coelho2012} F. R. C. Coelho, D. de Mattos, and E. L. dos Santos, On the existence of $G$-equivariant maps. \emph{Bull. Braz. Math. Soc.} \textbf{43}  (2012), no.~3, 407--421.
\bibitem{ConnerFloyd1960} P. E. Conner and E. E. Floyd, Fixed point free involutions and equivariant maps. \emph{Bull. Amer. Math. Soc.} \textbf{66} (1960), 416--441.
\bibitem{DavisKirk} J. F. Davis and P. Kirk, \emph{Lecture Notes in Algebraic Topology}. Graduate Studies in Mathematics, vol. 35, Amer. Math. Soc., Providence, RI, 2001.
\bibitem{DeoS.Singh1982} S. Deo and T. B. Singh, On the converse of some theorems about orbit spaces. \emph{J. Lond. Math. Soc.} \textbf{25} (1982), no. 1, 162--170.
\bibitem{DeoTripathi1982} S. Deo and H. S. Tripathi, Compact Lie group actions on finitistic spaces. \emph{Topology}  \textbf{21} (1982), no. 4, 393--399.
\bibitem{DeyMSingh2018} P. Dey and M. Singh, Free actions of some compact groups on Milnor manifolds. \emph{Glasgow Math. J.} \textbf{61} (2019), no. 3, 727--742.
\bibitem{PDey2018} P. Dey, Free actions of finite group on products of Dold manifolds. 2018, arXiv: \href{https://arxiv.org/pdf/1809.02307.pdf}{\blue 1809.02307v4}
\bibitem{Dold1956} A. Dold, Erzeugende der Thomschen Algebra $\mathfrak{R}$. \emph{Math. Z.} \textbf{65} (1956), 25--35.
\bibitem{Dotzel2001} R. M. Dotzel, T. B. Singh, and S. P. Tripathi, The cohomology rings of the orbit spaces of free transformation groups of the product of two spheres. \emph{Proc. Amer. Math. Soc.} \textbf{129} (2001), no. 3, 921--930.
\bibitem{Hatcher2002} A. Hatcher, \emph{Algebraic topology}. Cambridge University Press, Cambridge, 2002.
\bibitem{Livesay1960} G. R. Livesay, Fixed point free involutions on the 3-sphere. \emph{Ann. Math.} \textbf{72} (1960), 603--611.
\bibitem{Mccleary2001} J. McCleary, \emph{A User's Guide to Spectral Sequences}, 2nd edition. Cambridge Studies in Advanced Mathematics, vol. 58. Cambridge University Press,  Cambridge, 2001.
\bibitem{MoritaMattosPergher2018} A. M. M. Morita, D. de Mattos, and P. L. Q. Pergher, The cohomology ring of orbit spaces of free $\zz_2$-actions on some Dold manifolds. \emph{Bull. Aust. Math. Soc.} \textbf{97} (2018), no. 2, 340--348.
\bibitem{Rice1969} P. M. Rice, Free actions of $\zz_4$ on $S^3$. 
\emph{Duke Math. J.} \textbf{36} (1969), 749--751.
\bibitem{Ritter1973} G. X. Ritter, Free $\zz_8$ actions on $S^3$. \emph{Trans. Amer. Math. Soc.} \textbf{181} (1973), 195--212.
\bibitem{Ritter1974} G. X. Ritter, Free actions of cyclic groups of order $2^n$ on $S^1\times S^2$. \emph{Proc. Amer. Math. Soc.} \textbf{46} (1974), 137--140.
\bibitem{Rubinstein1979} J. H. Rubinstein, Free actions of some finite groups on $S^3$. I. \emph{Math. Ann.} \textbf{240} (1979), no. 2, 165--175.
\bibitem{HKSinghTEJBSingh2008} H. K. Singh and T.B. Singh, Fixed point free involutions on cohomology projective spaces. \emph{Indian J. pure appl. Math.} \textbf{39} (2008), no. 3, 285--291.
\bibitem{HKSinghTBSingh2010} H. K. Singh and T. B. Singh, The cohomology of orbit spaces of certain free circle group actions. \emph{Proc. Indian Acad. Sci. Math. Sci.} \textbf{122} (2012), no. 1, 79--86.
\bibitem{MSingh2010} M. Singh, Orbit spaces of free involutions on the product of two projective spaces. \emph{Results. Math.} \textbf{57} (2010), no. 1-2, 53--67.
\bibitem{MSingh2013} M. Singh, Cohomology algebra of orbit spaces of free involutions on lens spaces. \emph{J. Math. Soc. Japan} \textbf{65} (2013), no. 4, 1055--1078.
\bibitem{SHTSingh2017}S. K. Singh, H. K. Singh, and T. B. Singh, A Borsuk–Ulam type theorem for the product of a projective space and 3-sphere. \emph{Topology Appl.} \textbf{225} (2017), 112--129.
\bibitem{Tao1962} Y. Tao, On fixed point free involutions of $S^1\times S^2$. \emph{Osaka Math. J.} \textbf{14} (1962), 145--152.
\bibitem{Loring2020} L. W. Tu, \emph{Introductory Lectures on Equivariant Cohomology}. Princeton University Press, Princeton, NJ, 2020.
\bibitem{Volovikov2000} A. Yu. Volovikov, On the index of $G$-spaces. \emph{Sbornik Math.} \textbf{191} (2000), no. 9, 1259--1277.
\end{thebibliography}
\end{document}